\newtheorem{theo+}{Theorem}[section]
\newtheorem{prop+}[theo+]{Proposition}
\newtheorem{coro+}[theo+]{Corollary}
\newtheorem{lemm+} [theo+]{Lemma}
\newtheorem{deep+}  [theo+]  {Deep Result}
\newtheorem{fact+}  [theo+]  {Fact}
\theoremstyle{definition}
\newtheorem{exam+}  [theo+]  {Example}
\newtheorem{rema+}  [theo+]  {Remark}
\newtheorem{defi+}  [theo+]  {Definition}
\newtheorem*{definition*}{Definition}
\newtheorem{xca+}[theo+]{Exercise}
\newtheorem{thmalph}{Theorem}
\newenvironment{theorem}{\begin{theo+}}{\end{theo+}}
\newenvironment{corollary}{\begin{coro+}}{\end{coro+}}
\newenvironment{lemma}{\begin{lemm+}}{\end{lemm+}}
\newenvironment{remark}{\begin{rema+}}{\end{rema+}}
\newtheorem{conj+}[theo+]{Conjecture}
\numberwithin{equation}{section}
\newcommand\beq{\begin{equation}\label}
\newcommand\eeq{\end{equation}}
\renewcommand\a[1]{{\acute{#1}}}
\def\draft{\centerline{(Draft {\the \day}/{\the\month} \the \year.)}}
\def\refn#1.#2{\expandafter\def\csname#1\endcsname{[#2]}}
\def\refnr#1.{\csname#1\endcsname}
\def\fa{\mathfrak a}
\def\fg{\mathfrak g}
\def\fk{\mathfrak k}
\def\fh{\mathfrak h}
\def\fl{\mathfrak l}
\def\fm{\mathfrak m}
\def\fp{\mathfrak p}
\def\ft{\mathfrak t}
\def\a{\alpha}
\def\Claminv2{|C(\Lambda)|^{-2}}
\def\varepsi{\varepsilon}
\def\de{d\varepsilon}
\def\Aa2D{A^{\a,2}(D)}
\def\bAa2D{\overline{A^{\a,2}(D)}}
\def\Ab2D{A^{\beta,2}(D)}
\def\bAb2D{\overline{A^{\beta,2}(D)}}
\def\abs#1{\vert#1\vert}
\def\Norm#1_#2{\Vert#1\Vert_{#2}}
\def\phipl12{\phi_{p_{l_1}, p_{l_2}}}
\def\phip01{\phi_{p_{0}, p_{0}}}
\def\a{\alpha}
\def\Claminv2{|C(\Lambda)|^{-2}}
\def\varepsi{\varepsilon}
\def\ad{\operatorname{ad}}
\def\Ad{\operatorname{Ad}}
\def\exp{\operatorname{exp}}
\def\Exp{\operatorname{Exp}}
\def\de{d\varepsilon}
\def\Aa2D{A^{\a,2}(D)}
\def\bAa2D{\overline{A^{\a,2}(D)}}
\def\Ab2D{A^{\beta,2}(D)}
\def\bAb2D{\overline{A^{\beta,2}(D)}}
\def\phipl12{\phi_{p_{l_1}, p_{l_2}}}
\def\phip01{\phi_{p_{0}, p_{0}}}
\def\alg/{algebra}
\def\Alg/{Algebra}
\def\alt/{alternative} 
\def\anal/{analytic}
\def\analfunc/{\anal/\ \func/}
\def\Ans/{\it Answer. \normal}
\def\ass/{associative}
\def\nass/{non-\ass/}
\def\autom/{automorphism}
\def\homom/{homomorphism}
\def\isom/{isomorphism}
\def\bdd/{bounded}
\def\Bdd/{Bounded}
\def\bddsymdom/{bounded \sym/ \dom/}
\def\Cartdom/{Cartan \dom/}
\def\bdry/{boundary}
\def\bsd/{\bdd/ \symdom/}
\def\bv/{boundary value}
\def\cf/{{\it cf}\.}
\def\Cf/{{\it Cf}\.}
\def\charr/{character}
\def\coeff/{coefficient}
\def\comm/{commutative}
\def\cpct/{compact}
\def\compl/{complex}
\def\comp/{complex}
\def\Comp/{Complex}
\def\conf/{conformal}
\def\conj/{conjugate}
\def\conn/{connect}
\def\cont/{continuous}
\def\conv/{converge} 
\def\convc/{convergence}
\def\convt/{convergent}
\def\convx/{convex}
\def\coord/{coordinate}
\def\lcoord/{local coordinate}
\def\Corr/{Corresponding}
\def\corr/{corresponding}
\def\corrd/{correspond}
\def\cov/{covariant}
\def\decomp/{decomposition}
\def\deco/{decompose}
\def\diff/{different} 
\def\Diff/{Different} 
\def\dimn/{dimension} 
\def\distr/{distribution} 
\def\div/{diverge} 
\def\dom/{domain}
\def\eg/{\hbox{\it e.g}\.}
\def\eigenf/{eigen\-\func/}
\def\eigensp/{eigen\-space}
\def\eigenv/{eigen\-value}
\def\eq/{equation}
\def\equa/{equation}
\def\de/{\diff/ial \equa/}
\def\do/{\diff/ial operator}
\def\ode/{ordinary \de/}
\def\pde/{partial \de/}
\def\pdo/{partial \diff/ial operator}
\def\psdo/{pseudo \diff/ial operator}
\def\fin/{finite}
\def\Ex/{\it Example.\ \normal}
\def\Exnr#1/{\it Example #1.\ \normal}
\def\foll/{follow}
\def\follg/{following}
\def\Follg/{Following}
\def\func/{function}
\def\Func/{Function}
\def\Fonc/{Fonc\-tion}
\def\fonc/{fonc\-tion}
\def\Funk/{Funk\-tion}
\def\funk/{Funk\-tion}
\def\gen/{general}
\def\har/{harmonic}
\def\Hint/{\it Hint. \normal}
\def\hist/{historic}
\def\histcl/{historical}
\def\hol/{holo\-morphic}
\def\homog/{ho\-mo\-ge\-ne\-ous}
\def\hyp/{hyper\-bolic}
\def\hyperg/{hyper\-geometric}
\def\ie/{\hbox{\it i.e.}}
\def\iff/{if and only if}
\def\ineq/{inequality}
\def\infra/{{\it inf\-ra}}
\def\ultra/{{\it ult\-ra}}
\def\Inpart/{In particular}
\def\inpart/{in particular}
\def\instof/{instead of}
\def\interps/{interpolation space}
\def\interp/{interpolation}
\def\Interp/{Interpolation}
\def\interpr/{Interpretation}
\def\Intr/{Introduction}
\def\intv/{interval}
\def\inv/{invariant}
\def\invc/{invariance}
\def\Iowords/{In other words}
\def\iowords/{in other words}
\def\ipr/{inner product}
\def\irred/{irreducible}
\def\lb/{line bundle}
\def\lin/{linear}
\def\lhs/{left hand side}
\def\rhs/{right hand side}
\def\loc/{local}
\def\math/{mathematic}
\def\mathcn/{\math/ian}
\def\manif/{manifold}
\def\meas/{measure}
\def\measl/{measurable}
\def\mero/{mero\-morphic}
\def\mon/{monomial}
\def\monog/{monogenic}
\def\mult/{multiple}
\def\multy/{multiply}
\def\multn/{multiplication}
\def\nas/{necessary and sufficient}
\def\nbd/{neighborhood}
\def\neg/{negative}
\def\nondeg/{nondegenerate}
\def\Oohand/{On the other hand}
\def\oohand/{on the other hand}
\def\Oonhand/{On the one hand}
\def\oonhand/{on the one hand}
\def\oper/{operator}
\def\orth/{ortho\-gonal}
\def\orthon/{ortho\-normal}
\def\otoh/{on the other hand}
\def\quat/{quaternion}
\def\pp/{\hbox{a. e.}}
\def\psh/{plurisubharmonic}
\def\pol/{polynomial}
\def\pot/{potential}
\def\pos/{positive}
\def\princ/{principle}
\def\prob/{probability}
\def\proj/{projective}
\def\projn/{projection}
\def\Proof/{\it Proof:\normal}
\def\Rem/{\it Remark\normal}
\def\Remnr#1/{\it Remark\ \normal #1. }
\def\rep/{representation}
\def\reps/{representations}
\def\meta/{metaplectic representation}
\def\repr/{reproducing}
\def\reprker/{reproducing kernel}
\def\resp/{respective} 
\def\resply/{respectively}
\def\restr/{restriction}
\def\sa/{self-adjoint}
\def\st/{such that}
\def\sol/{solution}
\def\ru/{space}
\def\sph/{spherical}
\def\ssp/{sub\ru/}
\def\sym/{symmetric}
\def\Sym/{Symmetric}
\def\symb/{symbol}
\def\symbc/{symbolic}
\def\symdom/{\sym/ domain}
\def\symp/{symplectic}
\def\Theor#1/{\fet Theorem #1.\ \normal}
\def\Lem#1/{\fet Lemma #1.\ \normal}
\def\Lemma/{\fet Lemma.\ \normal}
\def\topl/{topology}
\def\topll/{topological}
\def\transf/{transform}
\def\transl/{translation}
\def\transfn/{transformation}
\def\transv/{transvectant}
\def\trig/{trigonometric}
\def\tril/{trilinear}
\def\trilf/{trilinear form}
\def\uhp/{upper halfplane}
\def\uhs/{upper halfspace}
\def\vb/{vector bundle}
\def\vf/{vector field}
\def\vsp/{vector space}
\def\wrt/{with respect to}
\def\Wlog/{Without loss of generality}
\def\a{\alpha}
\def\Ab/{Abel}
\def\Ban/{Banach}
\def\Bansp/{\Ban/ space}
\def\Belt/{Bel\-tra\-mi}
\def\Berg/{Berg\-man}
\def\Bern/{Ber\-nou\-lli}
\def\Berz/{Berezin}
\def\Bess/{Bessel}
\def\Cart/{Car\-tan}
\def\Cay/{Cay\-ley}
\def\CG/{Clebsch-Gordan}
\def\Cl/{Clifford}
\def\CR/{Cauchy-Rie\-mann}
\def\Dir/{Dirichlet}
\def\Eucl/{Euclide}
\def\Eucln/{Euclidean}
\def\F/{Fourier}
\def\Hank/{Hankel}
\def\Hankf/{\Hank/ form}
\def\Herm/{Hermite}
\def\Hilb/{Hilbert}
\def\Hilbs/{Hilbert space}
\def\Hilbsp/{Hilbert space}
\def\HS/{Hilbert-Schmidt}
\def\Lag/{La\-grange}
\def\Lap/{La\-place}
\def\LapBelt/{\Lap/-\Belt/}
\def\Leb/{Lebesgue}
\def\Marc/{Mar\-cin\-kie\-wicz}
\def\Moeb/{Moebius}
\def\Moebt/{Moebius transformation}
\def\Moebtransfn/{Moebius transformation}
\def\Pla/{Plan\-che\-rel}
\def\Poin/{Poin\-car\'e}
\def\Riem/{Rie\-mann}
\def\Riemn/{\Riem/ian}
\def\psRiemn/{pseudo-\Riem/ian}
\def\Riems/{Rie\-mann surface}
\def\Schroe/{Schr\"odinger}
\def\Weier/{Weier\-strass}
\def\anal/{analytic}
\def\bsd/{bounded symmetric domain  }
\def\bdd/{bounded}
\def\calc/{calculation}\def\conj{conjugate}
\def\calci/{calculating}\def\eg{e.g.}
\def\conj/{conjugate}
\def\deco/{decomposition}
\def\eg/{e.g.}
\def\fct/{function}
\def\gp/{group}
\def\hw/{highest weight}
\def\hwv/{highest weight vector}
\def\hwvs/{highest weight vectors}
\def\lw/{lowest weight}
\def\lwv/{lowest weight vector}
\def\lwvs/{lowest weight vectors}
\def\hds/{holomorphic discrete series}
\def\iff/{if and only if}
\def\inv/{invariant}
\def\irrde/{irreducible decomposition}
\def\meas/{measure}
\def\transf/{transform}
\def\rep/{representation}
\def\resp/{respectively}
\def\inters/{intertwines}
\def\interg/{intertwining}
\def\meta/{metaplectic representation}
\def\qu/{quaternion}
\def\rep/{representation}
\def\symdom/{ symmetric domain}
\def\st/{such that}
\def\shd/{subhead}
\def\transf/{transform}
\def\wrt/{with respect to}
\def\Norm#1#2#3{\Vert#1\Vert^{#3}_{{#2}}}
\newcommand*\pFq[6][8]{%
	\begingroup 
	\pFqmuskip=#1mu\relax
	\mathchardef\normalcomma=\mathcode`,
	\mathcode`\,=\string"8000
	\begingroup\lccode`\~=`\,
	\lowercase{\endgroup\let~}\pFqcomma
	{}_{#2}F_{#3}{\left(\genfrac..{0pt}{}{#4}{#5};#6\right)}%
	\endgroup
}
\newcommand{\pFqcomma}{{\normalcomma}\mskip\pFqmuskip}
\DeclareMathOperator{\gl}{\mathfrak{gl}}
\DeclareMathOperator{\SL}{SL}
\let\sl\relax
\DeclareMathOperator{\sl}{\mathfrak{sl}}
\DeclareMathOperator{\SO}{SO}
\DeclareMathOperator{\so}{\mathfrak{so}}
\DeclareMathOperator{\Sp}{Sp}
\let\sp\relax
\DeclareMathOperator{\sp}{\mathfrak{sp}}
\DeclareMathOperator{\SU}{SU}
\DeclareMathOperator{\su}{\mathfrak{su}}
\newcommand{\fraka}{\mathfrak{a}}
\newcommand{\frake}{\mathfrak{e}}
\newcommand{\frakf}{\mathfrak{f}}
\newcommand{\frakg}{\mathfrak{g}}
\newcommand{\frakh}{\mathfrak{h}}
\newcommand{\frakk}{\mathfrak{k}}
\newcommand{\frakl}{\mathfrak{l}}
\newcommand{\frakm}{\mathfrak{m}}
\newcommand{\frakn}{\mathfrak{n}}
\newcommand{\frakt}{\mathfrak{t}}
\newcommand{\fraku}{\mathfrak{u}}
\newcommand{\CC}{\mathbb{C}}
\newcommand{\RR}{\mathbb{R}}
\newcommand{\ZZ}{\mathbb{Z}}
\newcommand{\0}{\textbf{0}}
\DeclareMathOperator{\Hom}{Hom}
\renewcommand{\min}{{\textup{min}}}
\begin{document}
	
\title
[Cartan--Helgason theorem for quaternionic
symmetric spaces]{Cartan--Helgason theorem for quaternionic symmetric and twistor spaces}

\begin{abstract} 
  Let $(\mathfrak g, \mathfrak k)$ be
 a complex quaternionic symmetric pair
 with $\fk$
 having an ideal $\mathfrak{sl}(2, \mathbb C)$,
 $\fk=\mathfrak{sl}(2, \mathbb C)+\fm_c$.
Consider the representation $S^m(\mathbb C^2)=\mathbb C^{m+1}$
of
$\fk$ via the projection onto the ideal
 $\fk\to \mathfrak{sl}(2, \mathbb C)$.
 We study  the finite dimensional irreducible representations
 $V(\lambda)$
 of $\mathfrak g$ which contain
 $S^m(\mathbb C^2)$ under $\fk\subseteq \fg$.
  We give a characterization of all such
  representations
  $V(\lambda)$
  and find the corresponding
  multiplicity
  $m(\lambda,m)=\dim \Hom (V(\lambda)|_\frakk,S^m(\CC^2)).$
  We consider also the branching problem of $V(\lambda)$ under 
$\fl=\mathfrak{u}(1)_{\mathbb C} +
  \mathfrak{m}_c\subset
  \frakk$ and find the multiplicities.
  Geometrically the 
 Lie subalgebra $\fl\subset \fk$
defines  a twistor space 
  over the compact symmetric space 
  of the compact real form $G_c$
  of $G_{\mathbb C}$, $\text{Lie}(G_{\mathbb C})=\fg$, and
  our results give the  decomposition
  for the $L^2$-spaces of sections of certain
  vector bundles
  over the symmetric space
  and line bundles over the twistor space.
  This generalizes Cartan--Helgason's theorem
  for symmetric spaces
  $  (\mathfrak g, \mathfrak k)$
  and Schlichtkrull's theorem
  for Hermitian symmetric spaces
  where one-dimensional representations
  of $\fk$ are considered.
\end{abstract}
\keywords{Cartan--Helgason theorem, quaternionic symmetric space, twistor space, finite dimensional representations, branching rule, multiplicity}

\subjclass[2020]{17B10, 22E46, 43A77, 43A85}

\author{Clemens Weiske}
\address{Mathematical Sciences, Chalmers University of Technology and Mathematical Sciences, G\"oteborg University, SE-412 96 G\"oteborg, Sweden}
\email{weiske@chalmers.se}

\author{Jun Yu}
\address{Beijing International Center for Mathematical Research and School of Mathematical Sciences,
	Peking University, No. 5 Yiheyuan Road, Beijing 100871, China}
\email{junyu@bicmr.pku.edu.cn}

\author{Genkai Zhang}
\address{Mathematical Sciences, Chalmers University of Technology and Mathematical Sciences, G\"oteborg University, SE-412 96 G\"oteborg, Sweden}
\email{genkai@chalmers.se}
\thanks{The first named author
  was supported by a research grant from the Knut and Alice Wallenberg
  foundation (KAW 2020.0275). The second named author was supported partially by the NSFC Grant 11971036. The third named author was supported
  partially by the Swedish Research Council (VR, Grants 2018-03402,  2022-02861).}

\maketitle


\section*{Introduction}

\subsection*{Cartan-Helgason Theorem}

In the present paper we shall prove a generalization of the Cartan-Helgason theorem
for quaternionic symmetric and twistor spaces. 
Let $G_c$ be a simply connected and simple compact Lie group
and $G_c/K$ a symmetric space of $G_c$.
The   Cartan-Helgason theorem
gives a parametrization of the representations
of $G_c$ appearing in the space $L^2(G_c/K)$
and thus a decomposition of the space $L^2(G_c/K)$.
It is commonly reformulated in
Lie algebraic terminology. So let
$(\mathfrak g, \mathfrak k)$ be a complex symmetric pair
with a Cartan decomposition $\mathfrak g=\mathfrak p +\mathfrak k$.
Let $\mathfrak a\subset \mathfrak p$ be a Cartan subspace
and $\mathfrak m$ the centralizer of  $\mathfrak a$ in $\frakk$,
$\mathfrak h=\mathfrak a+\mathfrak h_{\fm}\subseteq
\mathfrak p+\mathfrak k =\mathfrak g$ be a Cartan subalgebra of $\fg$
containing $\mathfrak a$.
In this setting, the Cartan-Helgason theorem \cite[Ch.~V]{Hel00}
gives  necessary and sufficient
condition for the highest weight $\lambda$
of an irreducible finite dimensional representation $V(\lambda)$
so that $V(\lambda)$ contains a non-zero
$\fk$-invariant vector. The condition
can briefly be stated as the restriction of $\lambda$
on $\fh_m$ is vanishing and the Killing form
of $\lambda$ against
the roots of $\fg$ with respect to $\fa$
satisfies certain integral and dominant
condition; moreover when the condition is satisfied
the multiplicity of the trivial representation
is one.  When $(\mathfrak g, \mathfrak k)$
is the complexification of a Hermitian
symmetric pair $(\mathfrak g_c, \mathfrak k_0)$
the Lie algebra $\mathfrak k_0$ has
also one-dimensional center and a one-parameter
family of one-dimensional representations.
Schlichtkrull \cite{Sch84} has found a generalization
for the Cartan-Helgason theorem characterizing
irreducible representations $V(\lambda)$ of $\fg$
containing a fixed one-dimensional $\fk$-representation.
It thus gives a decomposition
of $L^2(G_c/K, \chi)$ for the
$L^2$-space of sections of the
line bundle defined by a one-dimensional
character $\chi$.

We shall consider the case
when $(\fg, \fk)$ is a quaternionic symmetric pair.

\subsection*{Quaternionic symmetric
	and twistor spaces}

A compact quaternionic symmetric space $G_c/K$
is a symmetric space with $K$ being isomorphic to $\SU(2)\times M_c$
up to finite cover.
Each complex simple
Lie algebra has a compact  real form $\fg_c$
with $(\fg_c, \fk_0)$ being quaternionic symmetric with
$\fk_0$ containing an ideal $\su(2)$. Let $\fg$
be a complex simple Lie algebra with $\ft$ a Cartan subalgebra.
Let $\beta$ be the highest root. It determines
a subalgebra $\mathfrak{sl}(2, \mathbb C)
=\mathfrak{sl}(2, \mathbb C)_\beta$
and a Cartan involution
so that $(\fg, \fk)$ is a symmetric pair with $\fk=\mathfrak{sl}(2,\CC)
+\fm_c$. Let $\beta^\vee$ be the co-root of $\beta$.
Let ${\rm U}(1)\subseteq \SU(2)$
be such that the Lie algebra of ${\rm U}(1)$
is $\mathbb Ri\beta^\vee$. Then the space $G_c/{\rm U}(1)\times M_c$ is called 
a twistor space and has a complex structure 
defined by the positive root spaces of $\beta^\vee$. 

Let $\Gamma_m:=S^m(\mathbb C^2)$
be the irreducible representation of
$\mathfrak{sl}(2, \mathbb C)_\beta$ with highest weight $\frac m2
\beta$. We shall find a characterization of
the highest weight $\lambda$ of an irreducible
representation $ V(\lambda)$ of $\fg$ containing
the finite dimensional representation
$\Gamma_m\boxtimes 1_{\fm_c}$. As an application we
find the irreducible representation of
space $L^2(G_c/\SU(2)\times M_c, \Gamma_m\boxtimes 1_{\fm_c})$
of sections of the vector bundle 
and of $L^2(G_c/{\rm U}(1)\times M_c)$; see Remark \ref{final-rem}.

\subsection*{Main theorems}

We recall first the Cartan--Helgason theorem.
\begin{thmalph}[\cite{Hel00} Chapter V Theorem~4]
	Let $\lambda$ be an integral dominant weight and $V(\lambda)$ the irreducible finite dimensional representation of $\frakg$ of highest weight $\lambda$. The representation $V(\lambda)$ contains a $\frakk$-fixed vector if and only if $\lambda|_{\frakh_\frakm}=0$ and $\lambda(\alpha^\vee)\equiv 0 \pmod 2$ for every real simple root $\alpha$. In this case the subspace of $\frakk$-fixed vectors is one-dimensional.
\end{thmalph}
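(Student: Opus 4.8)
The plan is to prove both directions through the Iwasawa decomposition $\frakg=\frakk\oplus\fraka\oplus\frakn$, where $\frakn=\sum_{\beta\in\Sigma^+}\frakg_\beta$ is the sum of the positive restricted root spaces and $\Sigma$ is the restricted root system of $(\frakg,\frakk)$ relative to $\fraka$; I fix an ordering of the $\frakh$-roots compatible with a chosen ordering $\Sigma^+$ on $\fraka^*$, and write $\overline{\frakn}=\theta\frakn$. Throughout I pass to the compact real form, so that $V(\lambda)$ carries an invariant Hermitian form making distinct $\frakh$-weight spaces orthogonal, and a $\frakk$-fixed vector is the same as a $K$-fixed vector. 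The central object is the decomposition $V(\lambda)=\bigoplus_\nu V(\lambda)^{(\nu)}$ into restricted (i.e.\ $\fraka$-) weight spaces, together with the standard structural fact that the top piece $V(\lambda)^{(\lambda|_\fraka)}$ is irreducible under $\frakm$ with highest weight $\lambda|_{\frakh_\frakm}$.

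For necessity, let $0\neq v\in V(\lambda)^{\frakk}$, so in particular $v$ is $\frakm$-fixed. Let $\nu$ be the highest restricted weight occurring in $v$. Since $\frakg=\frakk\oplus\fraka\oplus\overline{\frakn}$ and $v$ is $\frakk$-fixed, irreducibility gives $V(\lambda)=U(\frakg)v=U(\overline{\frakn})\,U(\fraka)\,v$; as $\fraka$ preserves and $\overline{\frakn}$ lowers restricted weights, every restricted weight of $V(\lambda)$ is $\le\nu$, forcing $\nu=\lambda|_\fraka$. Hence the projection $v_0$ of $v$ onto $V(\lambda)^{(\lambda|_\fraka)}$ is nonzero, and it is $\frakm$-fixed because $\frakm$ centralizes $\fraka$ and so preserves each restricted weight space. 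A nonzero $\frakm$-fixed vector in the $\frakm$-irreducible space $V(\lambda)^{(\lambda|_\fraka)}$ forces that space to be the trivial module, whence $\lambda|_{\frakh_\frakm}=0$ and $V(\lambda)^{(\lambda|_\fraka)}=\CC\xi_\lambda$. Applying the same top-weight argument to a difference of two fixed vectors shows $v\mapsto v_0\in\CC\xi_\lambda$ is injective, so $\dim V(\lambda)^{\frakk}\le1$. The parity condition comes from reduction to rank one: for each real simple root $\alpha$ (one with $\theta\alpha=-\alpha$, so $\alpha^\vee\in\fraka$) the triple $\{e_\alpha,\alpha^\vee,f_\alpha\}$ spans a $\theta$-stable $\mathfrak{sl}(2,\CC)$ whose compact symmetric subpair is $(\mathfrak{su}(2),\mathfrak{so}(2))$; restricting $v$ exhibits an $\SO(2)$-fixed vector in $V(\lambda)|_{\mathfrak{su}(2)}$, and since the $\SO(2)$-spherical representations of $\SU(2)$ are exactly the even symmetric powers $S^{2k}(\CC^2)$, the weight $\lambda(\alpha^\vee)$ must lie in $2\ZZ$.

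For sufficiency I would show that the set of highest weights $\lambda$ with $V(\lambda)^{\frakk}\neq0$ is a sub-monoid of the dominant weights containing every $\lambda$ satisfying the two conditions. The monoid property is the Cartan product step: if $v_i\in V(\lambda_i)^{\frakk}$, then $v_1\otimes v_2$ is $\frakk$-fixed, and by the necessity analysis each $v_i$ has nonzero highest-weight component, so $\langle\xi_{\lambda_1}\otimes\xi_{\lambda_2},\,v_1\otimes v_2\rangle=\langle\xi_{\lambda_1},v_1\rangle\langle\xi_{\lambda_2},v_2\rangle\neq0$; since $\xi_{\lambda_1}\otimes\xi_{\lambda_2}$ is the highest weight vector of the Cartan component $V(\lambda_1+\lambda_2)\subseteq V(\lambda_1)\otimes V(\lambda_2)$, this forces the projection of $v_1\otimes v_2$ onto that component to be nonzero, so $V(\lambda_1+\lambda_2)^{\frakk}\neq0$. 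It then remains to realize a set of generators of the monoid $\{\lambda:\lambda|_{\frakh_\frakm}=0,\ \lambda(\alpha^\vee)\in2\ZZ\ \text{for all real simple }\alpha\}$ as spherical weights, which the Cartan-product step localizes to the rank-one subsystems $\mathfrak{sl}(2,\CC)_\alpha$ and a finite list of fundamental spherical weights.

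The main obstacle is the sufficiency direction, specifically establishing sphericity of the generators of this weight monoid: the Cartan-product argument reduces the problem to producing an explicit $\frakk$-fixed vector for each fundamental spherical weight, and it is here that the detailed structure of the restricted root system (and, in the classical treatment, a case-by-case examination or the rank-one $S^{2k}(\CC^2)$ computation) is genuinely needed. The enabling lemma used throughout is the $\frakm$-irreducibility of the top restricted weight space $V(\lambda)^{(\lambda|_\fraka)}$ with highest weight $\lambda|_{\frakh_\frakm}$; securing this, together with the correct identification of the real simple roots with the $\fraka$-directions along which the $\mathfrak{sl}(2,\CC)$-reduction is carried out, is the point requiring the most care.
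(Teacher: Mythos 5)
Your sufficiency direction is where the genuine gap lies: the Cartan-product argument correctly shows that the set of spherical highest weights is a sub-monoid of $\Lambda$, but the actual content of the theorem is exactly the step you defer, namely producing a nonzero $\frakk$-fixed vector for each generator of the monoid $\{\lambda\,:\,\lambda|_{\frakh_\frakm}=0,\ \lambda(\alpha^\vee)\in 2\ZZ \text{ for all real simple }\alpha\}$. No soft argument supplies this existence statement; one needs either Helgason's analytic construction (take $v=\int_K \pi(k)\xi_\lambda\,dk$ and prove $v\neq 0$ via the non-vanishing of the Harish-Chandra $c$-function at dominant $\lambda$), or an algebraic substitute. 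The paper itself does not reprove Theorem~A --- it quotes \cite{Hel00} --- but within the paper's framework the full statement is an immediate specialization of Kostant's branching theorem (Theorem~\ref{thm:Kostant}) to the trivial $\frakk$-module $W(\mu)=\mathbf{1}$: since every $Z_{\alpha_i}\in\frakk$ acts by $0$ on $\mathbf{1}$, the condition $q_{\lambda,i}(Z_{\alpha_i})v=0$ becomes $q_{\lambda,i}(0)=0$, which for a real simple root $\alpha_i$ means $0\in\{-\lambda_i,\dots,\lambda_i-2,\lambda_i\}$, i.e.\ $\lambda_i$ even, and holds automatically for the remaining roots in $\Pi_n(\frakg,\frakh)$; combined with the requirement $\mathbf{1}[\lambda|_{\frakh_\frakm}]\neq\{0\}$, i.e.\ $\lambda|_{\frakh_\frakm}=0$, this yields both directions \emph{and} the multiplicity-one assertion in one stroke (this is precisely the pattern of the paper's Corollary~\ref{cor:Kostant_quaternionic}). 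So your route, even if completed, would replace a one-line specialization by a case-by-case construction of fundamental spherical vectors, which is the hardest part of the classical proof and is absent from your proposal.

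There is also a smaller, fixable gap in your necessity argument for the parity condition. As written, ``restricting $v$ exhibits an $\SO(2)$-fixed vector in $V(\lambda)|_{\su(2)}$'' proves nothing about $\lambda(\alpha^\vee)$: the restriction $V(\lambda)|_{\sl(2,\CC)_\alpha}$ has many irreducible constituents, and the existence of an $\SO(2)$-fixed vector only forces evenness of those constituents onto which $v$ projects nontrivially. To close the argument you must invoke the fact, which your first paragraph does establish, that $\langle v,\xi_\lambda\rangle\neq 0$: since $e_\alpha\xi_\lambda=0$ and $\alpha^\vee\xi_\lambda=\lambda(\alpha^\vee)\xi_\lambda$, the vector $\xi_\lambda$ generates an $\sl(2,\CC)_\alpha$-irreducible of highest weight $\lambda(\alpha^\vee)$; the $\su(2)_\alpha$-isotypic components of $V(\lambda)$ are mutually orthogonal, so $\langle v,\xi_\lambda\rangle\neq0$ forces the component of $v$ in the $S^{\lambda(\alpha^\vee)}(\CC^2)$-isotypic part to be nonzero, and only then does the evenness of $\lambda(\alpha^\vee)$ follow from the rank-one fact. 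Without this link the parity step is a non sequitur.
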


In the rest of the paper we let $(\fg, \fk)$, $\fk=\mathfrak{sl}(2,\CC)+\fm_c$,
be an exceptional quaternionic symmetric pair
with highest root $\beta$ (see Table~\ref{tab:2}).
The classical cases have been studied before in more generality (see
e.g. \cite{vDP99,Kna01,HTW05})  and we give the precise statement in
the quaternionic and twistor setting for completeness in
Section~\ref{sec:classical}.
We shall fix a system $\{\alpha_1, \cdots, \alpha_r\}$
of simple roots
and parametrize
the set $\Lambda$ of dominant weights
in terms of the corresponding fundamental weights $\{\varpi_j\}$;
see  (\ref{lambda-param}) below.
The irreducible representation
of  $\fg$ with a
highest weight $\lambda \in \Lambda$
will be denoted by $V(\lambda)$ throughout
the rest of the paper; similarly the representation
of $\fk$ with a highest weight  $\mu\in  \Lambda_{\fk}$ will
be denoted by $W(\mu)$ 
for $\mu\in  \Lambda_{\fk}$ with the simple
roots of $\fk$ being specified explicitly using
the Satake diagram.
Let $\alpha_1$ be the unique simple root connected to $-\beta$ in the affine Dynkin diagram and $\alpha_2$ the unique simple root connected to $\alpha_1$.
Then both $\alpha_1$ and $\alpha_2$ are real (see Table~\ref{tab:satake}).
For  $\lambda \in \Lambda$
we define
$$\lambda_1:=\lambda(\alpha_1^\vee), \qquad\lambda_2:=\lambda(\alpha_2^\vee).$$
For $d\in \ZZ_{\geq 0}$ let  $$\epsilon_i:=\begin{cases}
	0 & \text{if $\lambda_i-d\equiv 0 \pmod 2$,}\\ 
	1&\text{if $\lambda_i-d\equiv 1 \pmod 2$,} 
\end{cases} \qquad b_i:=\min\{\lambda_i,d-\epsilon_i\}.$$

\begin{thmalph}[See Theorem~\ref{thm:mult_not_G2}]
	Let $\frakg\not\cong \frakg_2$.
	The representation $V(\lambda)|_\frakk$ contains the representation $\Gamma_{m} \boxtimes \mathbf{1}_{\frakm_c}$ if only if
	\begin{enumerate}[label=(\roman*)]
		\item  $m=2d$ is even,
		\item$\lambda|_{\frakh_\frakm}=0$,
		\item $b_1+b_2 \geq d$,
		\item$\lambda(\alpha^\vee)\equiv 0 \pmod 2$ for every real simple root $\alpha\neq \alpha_1,\alpha_2$.
	\end{enumerate}
	In this case $V(\lambda)|_\frakk$ contains $\Gamma_{2d}\boxtimes \mathbf{1}_{\frakm_c}$ with multiplicity
	$$m(\lambda,d)=\left[\frac{b_1+b_2-d+2}{2}\right].$$
\end{thmalph}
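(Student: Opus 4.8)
The plan is to convert the branching multiplicity into a count of highest weight vectors, reduce it by the $\beta^\vee$-grading to a statement about $\frakm_c$-invariants, and finish with an explicit lattice count governed by $\lambda_1,\lambda_2$ and $d$. Because $\frakk=\mathfrak{sl}(2,\CC)_\beta\times\frakm_c$ is a direct product of ideals, a subrepresentation isomorphic to $\Gamma_{2d}\boxtimes\mathbf 1_{\frakm_c}$ is generated by a vector $v$ that is a highest weight vector of weight $2d$ for $\mathfrak{sl}(2,\CC)_\beta$ and is fixed by all of $\frakm_c$; hence
\[
m(\lambda,d)=\dim\{v\in V(\lambda):e_\beta v=0,\ \beta^\vee v=2d\,v,\ \frakm_c\cdot v=0\}.
\]
I would first record the grading $\frakg=\bigoplus_{j=-2}^{2}\frakg_j$ by $\ad(\beta^\vee)$-eigenvalues, in which $\mathfrak{sl}(2,\CC)_\beta=\frakg_2\oplus\CC\beta^\vee\oplus\frakg_{-2}$, the ideal $\frakm_c$ is exactly the centralizer of $\mathfrak{sl}(2,\CC)_\beta$ and sits in $\frakg_0$, and $\frakp=\frakg_{-1}\oplus\frakg_1\cong\CC^2\otimes E$ for an $\frakm_c$-module $E$ (the quaternionic structure). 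Writing $V(\lambda)^{\frakm_c}$ for the $\frakm_c$-fixed vectors, which form an $\mathfrak{sl}(2,\CC)_\beta$-submodule, the elementary $\mathfrak{sl}(2)$ identity reduces everything to the graded dimensions of the invariants:
\[
m(\lambda,d)=\dim\bigl(V(\lambda)^{\frakm_c}\bigr)_{2d}-\dim\bigl(V(\lambda)^{\frakm_c}\bigr)_{2d+2},
\]
the subscript denoting the $\beta^\vee$-eigenvalue.

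Next I would settle necessity of the four conditions. Condition (i) is forced by the global structure $K\cong(\SU(2)\times M_c)/\langle(-1,z_0)\rangle$: since $G_c$ is simply connected, $V(\lambda)$ is a genuine $K$-module, so only those $\SU(2)\times M_c$-types that descend to $K$ can occur, and $\Gamma_m\boxtimes\mathbf 1_{\frakm_c}$ descends precisely when $(-1)^m=1$, i.e. $m=2d$. For (ii) and (iv) I would argue exactly as in the classical Cartan--Helgason theorem, applied to the root subsystem orthogonal to $\beta$ that generates $\frakm_c$: since $\frakm\subseteq\frakm_c$ and hence $\frakh_\frakm\subseteq\frakh_{\frakm_c}$, invariance under $\frakh_{\frakm_c}$ already forces the vanishing (ii), while the mere existence of $\frakm_c$-fixed vectors forces the parity $\lambda(\alpha^\vee)\equiv0\bmod2$ on the real simple roots of $\frakm_c$. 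The two nodes $\alpha_1,\alpha_2$ are removed from this parity condition because they are precisely the roots coupling $\mathfrak{sl}(2,\CC)_\beta$ to $\frakm_c$: one has $\langle\alpha_i,\beta^\vee\rangle=\delta_{i1}$, so $\alpha_1\notin\frakm_c$ is noncompact with $e_{\alpha_1}\in\frakg_1$, whereas $\alpha_2\in\frakm_c$ is the unique $\frakm_c$-simple root adjacent to $\alpha_1$; their would-be parity conditions are replaced by (i) and (iii).

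The core computation is the evaluation of the graded dimensions, and here I would work directly with the highest weight vectors. Once (ii) and (iv) hold, the trivial-$\frakm_c$ isotypic component is controlled entirely by the two string directions $\alpha_1$ (coupling into $\mathfrak{sl}(2,\CC)_\beta$) and $\alpha_2$ (inside $\frakm_c$), so producing an $e_\beta$-annihilated, $\frakm_c$-fixed vector of $\beta^\vee$-weight $2d$ amounts to distributing a lowering budget between these two channels. Each channel $i$ is capped simultaneously by the representation bound $\lambda_i$ and by the weight target, giving the truncated bound $b_i=\min\{\lambda_i,d-\epsilon_i\}$, the parity shift $\epsilon_i$ being exactly what is needed to align the $\mathfrak{sl}(2,\CC)_\beta$-weight with $2d$. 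The admissible configurations then range over an arithmetic progression of step two, and the number of such highest weight vectors is the number of integers $k\ge0$ with $2k\le b_1+b_2-d$, namely $\left[\frac{b_1+b_2-d+2}{2}\right]$. Positivity of this number is precisely $b_1+b_2\ge d$, which is (iii). As a sanity check, at $d=0$ the bounds $b_i\ge0$ force $\lambda_1,\lambda_2$ even and the multiplicity equals $1$, recovering the classical theorem verbatim.

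The main obstacle is the decoupling asserted at the start of the previous paragraph: proving rigorously that the graded multiplicity of the trivial $\frakm_c$-representation depends only on $(\lambda_1,\lambda_2,d)$—so that the type-dependent data, namely the large algebra $\frakm_c$ and the module $E$, drop out uniformly for $E_6,E_7,E_8,F_4$—and that it is given by exactly this two-channel lattice count with the correct parity. I expect to need either an explicit model in which $\mathfrak{sl}(2,\CC)_\beta$ acts visibly on $V(\lambda)^{\frakm_c}$ (for instance a polynomial or tensor realization built from $\frakg_1$ using the invariant form on $E$), or an induction on the restricted rank through the chain of sub-symmetric-pairs of $(\frakg,\frakk)$, in either case with careful parity bookkeeping to extract the floor. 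The case $\frakg\cong\frakg_2$ is excluded because its rank-two multiply-laced structure alters the roles of $\alpha_1,\alpha_2$ and the behaviour of the string bounds $b_i$, so the two-channel picture and the parity argument for (i) must be handled separately.
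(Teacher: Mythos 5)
Your reduction of the multiplicity to counting $\frakm_c$-fixed, $e_\beta$-annihilated vectors of $\beta^\vee$-weight $2d$, and the identity $m(\lambda,d)=\dim\bigl(V(\lambda)^{\frakm_c}\bigr)_{2d}-\dim\bigl(V(\lambda)^{\frakm_c}\bigr)_{2d+2}$, are correct, and your guessed combinatorial answer matches the theorem. But the proposal has a genuine gap, and you name it yourself: the ``decoupling'' step --- the claim that the trivial-$\frakm_c$ isotypic component is governed by a two-channel lattice count in $(\lambda_1,\lambda_2,d)$ with caps $b_i=\min\{\lambda_i,d-\epsilon_i\}$ --- is asserted, not proved, and you concede it is ``the main obstacle,'' offering only speculative strategies (an explicit model, or induction on restricted rank). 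That step \emph{is} the theorem: everything before it is routine, and everything after it is arithmetic. In particular, nothing in your sketch explains where the parity shifts $\epsilon_i$ or the truncation at $d-\epsilon_i$ actually come from, nor why the two ``channels'' interact only through the single inequality $b_1+b_2\geq d$; these are exactly the delicate points.

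For comparison, the paper closes this gap by an entirely concrete mechanism. It invokes Kostant's branching theorem (Theorem~\ref{thm:Kostant}), which converts the multiplicity into the dimension of a joint kernel $\{v\in\Gamma_m:\;q_{\lambda,i}(P_\beta(Z_{\alpha_i}))v=0\ \forall\,\alpha_i\in\Pi_n\}$ (Corollary~\ref{cor:Kostant_quaternionic}); it then computes the projections $P_\beta(Z_{\alpha_i})$ explicitly via Cayley transforms, reducing all exceptional cases to $\so(4,4)$ (Proposition~\ref{prop:D4_projections} and Section~\ref{sec:exceptional_projections}), finding that $Z_{\alpha_1},Z_{\alpha_2}$ act on a weight basis of $\Gamma_{2d}$ by the diagonal matrix $M_1(d)$ and the tridiagonal (Kac--Sylvester-type) matrix $M_2(d)$, with all other $Z_\alpha$ projecting to zero. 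The multiplicity is then the dimension of $\ker p_{\lambda_1}(M_1(d))\cap\ker p_{\lambda_2}(M_2(d))$, which is computed in Proposition~\ref{prop:joint_kernel} using the explicit Kac eigenvectors (Lemma~\ref{lemma:KS-matrix}) and their symmetry properties; this is where $\epsilon_i$, $b_i$, and the floor function genuinely arise. Note also that conditions (i), (ii), (iv) fall out of the same framework (integrality of the spectrum of $-\tfrac12 H_\beta$, vanishing of the $\frakm$-isotypic component, and $q_{\lambda,j}(0)\neq 0$ for odd $\lambda_j$ when $P_\beta(Z_{\alpha_j})=0$), whereas your necessity arguments rely on unverified structural claims (the precise center identification in $K$, and an appeal to the classical Cartan--Helgason theorem in a setting where it does not directly apply). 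If you want to complete your approach, the honest route is essentially to rediscover this machinery: some explicit computation replacing the joint-kernel analysis is unavoidable, since the two relevant operators do not commute and the answer cannot be obtained channel-by-channel.
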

We give a precise necessary and sufficient condition on the highest weight $\lambda$ such that $\lambda|_{\frakh_\frakm}=0$ in Remark~\ref{remark:h_m_res_0}.
Let $\frakl=\CC \beta^\vee \oplus \frakm_c \subseteq \frakk$.
\begin{thmalph}[See Theorem~\ref{thm:twistor_mult_not_G2}]
	Let $\frakg\not\cong \frakg_2$.
	Let $\lambda\in \Lambda$. The representation $V(\lambda)$ contains a $\frakl$-fixed vector if and only if 
	$\lambda|_{\frakh_\frakm}=0$,
	$\lambda(\alpha^\vee)\equiv 0 \pmod 2$ for every simple root $\alpha \neq \alpha_1,\alpha_2$.
	In this case the subspace of $\frakl$-fixed vectors in $V(\lambda)$ has dimension
	$$
	m(\lambda)=\left[\frac{(\lambda_1+1)(\lambda_2+1)+1}{2}\right].
	$$
\end{thmalph}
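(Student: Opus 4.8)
The plan is to reduce everything to the multiplicities already computed in Theorem~B. Since $\frakl=\CC\beta^\vee\oplus\frakm_c$, a vector of $V(\lambda)$ is $\frakl$-fixed precisely when it is annihilated by $\frakm_c$ and has $\beta^\vee$-weight $0$. First I would set $U:=V(\lambda)^{\frakm_c}$ and use that $\frakm_c$ and $\mathfrak{sl}(2,\CC)_\beta$ are commuting ideals of $\frakk$, so that $U$ is an $\mathfrak{sl}(2,\CC)_\beta$-submodule whose isotypic decomposition, by Theorem~B, is $U\cong\bigoplus_{d\geq 0}m(\lambda,d)\,\Gamma_{2d}$, only the even $\Gamma_{2d}$ occurring by condition (i). The space of $\frakl$-fixed vectors is then the $\beta^\vee$-weight-zero subspace $U_{0}$ of $U$, and since each $\Gamma_{2d}$ has a one-dimensional zero weight space while the odd $\Gamma_m$ have none, this gives
\[
\dim V(\lambda)^{\frakl}=\dim U_{0}=\sum_{d\geq 0}m(\lambda,d).
\]
This reduction is the conceptual core of the argument; the rest is extracting consequences of it.

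For the characterization, necessity is immediate: if $V(\lambda)^{\frakl}\neq 0$ then some multiplicity $m(\lambda,d)$ is positive, which by Theorem~B forces its two $d$-independent conditions, namely $\lambda|_{\frakh_\frakm}=0$ and the parity condition $\lambda(\alpha^\vee)\equiv 0\bmod 2$ on the real simple roots $\alpha\neq\alpha_1,\alpha_2$. That these match the conditions of the present statement I would verify against the Satake diagrams of Table~\ref{tab:satake}, using that $\lambda|_{\frakh_\frakm}=0$ makes $\lambda(\alpha^\vee)=0$ automatic on the imaginary simple roots. Sufficiency I would not argue separately: it is subsumed by the closed-form evaluation below, which is always at least $1$.

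It remains to evaluate $m(\lambda)=\sum_{d\geq 0}m(\lambda,d)$. Assuming the two conditions, Theorem~B gives $m(\lambda,d)=\big\lfloor(b_1+b_2-d+2)/2\big\rfloor$ when $b_1+b_2\geq d$ and $0$ otherwise, where $b_i(d)=\min\{\lambda_i,d-\epsilon_i\}$ equals the largest integer $\leq d$ of the same parity as $\lambda_i$ when $d\leq\lambda_i$ and saturates at $\lambda_i$ when $d\geq\lambda_i$. I would compute the sum by splitting according to the parities of $\lambda_1,\lambda_2$ and the three ranges cut out by $\min\{\lambda_1,\lambda_2\}$ and $\max\{\lambda_1,\lambda_2\}$, on each of which $b_1(d)+b_2(d)-d$ is piecewise linear, so that the sum collapses to elementary arithmetic series. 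For instance, writing $\lambda_1=2p$, $\lambda_2=2q$ in the case where both are even, pairing the contributions of $d=2e$ and $d=2e+1$ yields $2N(e)+1$ with $N(e)=\min\{p,e\}+\min\{q,e\}-e$, and $\sum_{e=0}^{p+q}N(e)=pq$ gives
\[
m(\lambda)=2pq+p+q+1=\big\lfloor((\lambda_1+1)(\lambda_2+1)+1)/2\big\rfloor,
\]
the three remaining parity cases being handled identically. The main obstacle is precisely this bookkeeping: organizing the parity-and-range casework so that all four combinations collapse to the single expression $\big\lfloor((\lambda_1+1)(\lambda_2+1)+1)/2\big\rfloor$, which, being at least $1$, simultaneously closes the sufficiency direction.
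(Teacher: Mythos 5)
Your reduction and your computation follow the paper's own proof. The paper likewise performs restriction in stages $\frakg\to\frakk\to\frakl$, observes that $\Gamma_{2d}\boxtimes\mathbf{1}_{\frakm_c}$ contains $\mathbf{1}_{\frakl}$ exactly once (its zero $\beta^\vee$-weight line) while no other $\frakk$-type contributes, and evaluates $m(\lambda)=\sum_{d\geq 0}m(\lambda,d)$ by parity-and-range casework; your pairing of $d=2e$ with $d=2e+1$, versus the paper's splitting into the three intervals $d\leq\lambda_1$, $\lambda_1<d\leq\lambda_2$, $\lambda_2<d\leq\lambda_1+\lambda_2$, is only a cosmetic difference, and your even/even evaluation $2pq+p+q+1$ agrees with the paper's.

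The genuine problem is the step you defer, namely that the conditions coming from Theorem~\ref{thm:mult_not_G2} ``match the conditions of the present statement.'' They do not, and your proposed justification (that $\lambda|_{\frakh_\frakm}=0$ forces $\lambda(\alpha^\vee)=0$ automatically) only covers the \emph{imaginary} simple roots, whereas the statement's parity condition also runs over the \emph{complex} simple roots, which exist in every case here except $\frakf_4$ (for $\frake_6$ these are the roots $\alpha_1,\alpha_3,\alpha_5,\alpha_6$ of Table~\ref{tab:satake}, joined in pairs by arrows). For those, $\lambda|_{\frakh_\frakm}=0$ forces only equalities such as $\lambda(\alpha_1^\vee)=\lambda(\alpha_6^\vee)$ (Remark~\ref{remark:h_m_res_0}), not evenness. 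Concretely, for $\frakg=\frake_6$ and $\lambda=\varpi_1+\varpi_6$ one has $\lambda|_{\frakh_\frakm}=0$ and $\lambda_1=\lambda_2=0$ in the theorem's notation, so Theorem~\ref{thm:mult_not_G2} with $d=0$ gives $m(\lambda,0)=1$: this $V(\lambda)$ contains a $\frakk$-fixed, hence $\frakl$-fixed, vector, although $\lambda(\alpha_1^\vee)=1$ is odd. So the ``only if'' direction of the statement, read literally, is false; what your argument actually establishes (and all that the summation argument can give) is the characterization with ``real simple root,'' exactly as in Theorem~\ref{thm:mult_not_G2} -- i.e.\ the word ``real'' is missing from the statement being proved. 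The paper's own proof silently ignores this equivalence-of-conditions issue, so you are in fact more careful in flagging it; but the resolution you propose is not correct, and the Satake-diagram verification you postpone would not close the gap -- it would uncover the mismatch. Your write-up should either restrict the parity condition to real simple roots or record this as a correction to the statement.
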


For $\frakg=\frakg_2$ we have  $\frakk= \sl(2,\CC)\oplus \frakm_c=\sl(2,\CC)_\beta\oplus \sl(2,\CC)_{\alpha_2}$.
We  consider $\frakk$-representations of the form $\Gamma_m \boxtimes \mathbf{1}_{\frakm_c}$ and also $\mathbf{1}_{\beta}\boxtimes \Gamma_m$, $\mathbf{1}_\beta$ being the trivial $\sl(2,\CC)_\beta$-module. Note that in our convention the left copy of $\sl(2,\CC)$ belongs to the highest root $\beta$. We let $\lambda_i, \epsilon_i$ and $b_i$ be as before and additionally we define
$$\lambda_2'=\begin{cases}
	\left[\frac{\lambda_2}{3}\right] & \text{if $\lambda_2 \equiv 0,2 \pmod 3$,}\\
	\left[\frac{\lambda_2}{3}\right]-1 & \text{if $\lambda_2 \equiv 1 \pmod 3$,}
\end{cases}, $$
$$\epsilon_2':=\begin{cases}
	0 & \text{if $\lambda'_2-d\equiv 0 \pmod 2$,}\\ 
	1&\text{if $\lambda_2'-d\equiv 1 \pmod 2$,} 
\end{cases} \qquad b_2':=\min\{\lambda_2',d-\epsilon_2'\}.$$
\begin{thmalph}[See Theorem~\ref{thm:mult_G2_short} and Theorem~\ref{thm:mult_G2_long}]
	Let $\frakg\cong\frakg_2$ and  $\lambda\in \Lambda$.
	\begin{enumerate}[label=(\arabic*)]
		\item $V(\lambda)|_\frakk$ contains the representation $\mathbf{1}_{\beta}\boxtimes \Gamma_m$ if only if
		\begin{enumerate}[label=(\roman*)]
			\item  $m=2d$ is even,
			\item $b_1+b_2 \geq d$,
		\end{enumerate}
		In this case $V(\lambda)|_\frakk$ contains $\mathbf{1}_{\frakm_c}\boxtimes \Gamma_{2d}$ with multiplicity
		$$m_{\alpha_2}(\lambda,d)=\left[\frac{b_1+b_2-d+2}{2}\right].$$
		\item $V(\lambda)|_\frakk$ contains the representation $\Gamma_{m} \boxtimes \mathbf{1}_{\frakm_c}$ if only if
		\begin{enumerate}[label=(\roman*)]
			\item  $m=2d$ is even,
			\item $b_1+b'_2 \geq d$,
		\end{enumerate}
		In this case $V(\lambda)|_\frakk$ contains $\Gamma_{2d}\boxtimes \mathbf{1}_{\frakm_c}$ with multiplicity
		$$m_\beta(\lambda,d)=\left[\frac{b_1+b'_2-d+2}{2}\right].$$
	\end{enumerate}
\end{thmalph}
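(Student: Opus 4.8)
My plan is to compute the branching of $V(\lambda)$ to $\frakk=\sl(2,\CC)_\beta\oplus\sl(2,\CC)_{\alpha_2}$ entirely at the level of $\ft$-weights, using that $\frakk$ shares the Cartan subalgebra $\ft$ with $\frakg_2$. Since $\frakk\subseteq\frakg$ has full rank, the multiplicity of $\Gamma_s\boxtimes\Gamma_t$ in $V(\lambda)|_\frakk$ is given by a Weyl-type (Klimyk) alternating sum over the small Weyl group $W_{\frakk}=\{1,r_\beta,r_{\alpha_2},r_\beta r_{\alpha_2}\}$. Because $\beta^\vee$ and $\alpha_2^\vee$ are linearly independent, the map $\nu\mapsto(\nu(\beta^\vee),\nu(\alpha_2^\vee))$ is injective on $\ft^\ast$, so I may write $N(p,q):=\dim V(\lambda)_\nu$ for the unique weight $\nu$ with those pairings. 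Using $\beta\perp\alpha_2$ (so $r_\beta\colon(p,q)\mapsto(-p,q)$ and $r_{\alpha_2}\colon(p,q)\mapsto(p,-q)$) together with the $W_\frakg$-invariance of weight multiplicities, the four terms collapse to the inclusion--exclusion
$$\dim\Hom_\frakk\!\big(\Gamma_s\boxtimes\Gamma_t,\,V(\lambda)\big)=N(s,t)-N(s+2,t)-N(s,t+2)+N(s+2,t+2).$$
For part (1) I specialise to $(s,t)=(0,2d)$ and for part (2) to $(s,t)=(2d,0)$.

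Next I make the weight lattice explicit. From $\beta=2\alpha_1+3\alpha_2$, equivalently $\beta^\vee=2\alpha_1^\vee+\alpha_2^\vee$, a weight $\nu=\lambda-a\alpha_1-b\alpha_2$ satisfies $p=2\lambda_1+\lambda_2-a$ and $q=\lambda_2+3a-2b$; hence lowering by $\alpha_1$ moves $(p,q)\mapsto(p-1,q+3)$ and lowering by $\alpha_2$ moves $(p,q)\mapsto(p,q-2)$. The integrality $b\in\ZZ_{\ge 0}$ forces $N(p,q)=0$ unless $6\lambda_1+4\lambda_2-3p-q$ is even; applied at the four relevant points this is exactly what makes $m=2d$ necessary. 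The two competing step sizes are the whole story: the $-2$ from $\alpha_2$ produces the parity corrections $\epsilon_i$ (whence $b_i=\min\{\lambda_i,d-\epsilon_i\}$ with $\lambda_i-b_i$ even), while the $+3$ from $\alpha_1$ is responsible for the factor-$3$ compression of $\lambda_2$ to $\lambda_2'$ that distinguishes part (2) (a string in the $\beta$-direction, where the $\alpha_2$-weight is moved in steps of $3$) from part (1) (a string in the $\alpha_2$-direction).

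It then remains to evaluate the four multiplicities $N$ and assemble. The geometric input is that $\frakp^+\cong\Gamma_1\boxtimes\Gamma_3$, so that $\frakp^+\oplus\CC e_\beta$ is a Heisenberg algebra and the lowering operators fill out the $\sl(2)$-strings in a controlled way; since $\frakg_2$ has rank $2$ the weight diagram is fully explicit and I would read off $N$ along the slice $q=0$ (part (2), counting $\sl(2)_{\alpha_2}$-invariants of prescribed $\beta$-weight) and the slice $p=0$ (part (1), counting $\sl(2)_\beta$-invariants of prescribed $\alpha_2$-weight), via Freudenthal's recursion or by first restricting to the long-root $A_2=\sl(3)$ in which $\sl(2)_\beta$ is the principal subalgebra. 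Inserting the values into the inclusion--exclusion, the sum telescopes to a count of lattice points on a segment, yielding $\left[\tfrac{b_1+b_2-d+2}{2}\right]$ for part (1) and $\left[\tfrac{b_1+b_2'-d+2}{2}\right]$ for part (2); the truncations $b_i=\min\{\lambda_i,d-\epsilon_i\}$ arise because the relevant $\sl(2)$-string is bounded both by $\lambda_i$ and by the distance $d$ to the target, and the non-emptiness of the segment is precisely the stated threshold $b_1+b_2\ge d$ (resp.\ $b_1+b_2'\ge d$).

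The main obstacle is the third step. The $\frakg_2$ weight multiplicities are \emph{not} multiplicity-free, so each $N$ grows and only the signed combination is clean; the real difficulty is keeping exact track of the parities $\epsilon_i,\epsilon_2'$ and of the interaction between the two step sizes $3$ and $2$ in the $(p,q)$-lattice, so as to land on the precise floor functions and on the sharp thresholds $b_1+b_2\ge d$ and $b_1+b_2'\ge d$ rather than an off-by-one variant. This parity bookkeeping, rather than any conceptual point, is where the work concentrates.
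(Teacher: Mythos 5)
Your framework is genuinely different from the paper's, and its first two steps are sound: since $\frakk=\sl(2,\CC)_\beta\oplus\sl(2,\CC)_{\alpha_2}$ contains the full Cartan subalgebra $\ft$ of $\frakg_2$, the Racah--Klimyk alternating sum over $W_\frakk\cong(\ZZ/2)^2$ is legitimate, and your inclusion--exclusion identity
$$\dim\Hom_\frakk\bigl(\Gamma_s\boxtimes\Gamma_t,V(\lambda)\bigr)=N(s,t)-N(s+2,t)-N(s,t+2)+N(s+2,t+2)$$
is correct (it follows from the Weyl character formula for $\frakk$, using $\beta\perp\alpha_2$), as is your lattice bookkeeping $\beta^\vee=2\alpha_1^\vee+\alpha_2^\vee$, the step vectors $(-1,+3)$ and $(0,-2)$, and the parity argument forcing $m=2d$. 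The paper argues quite differently: it invokes Kostant's branching theorem (Theorem~\ref{thm:Kostant}) for the symmetric pair, reducing the multiplicity to the joint kernel of $q_{\lambda,1}(P(Z_{\alpha_1}))$ and $q_{\lambda,2}(P(Z_{\alpha_2}))$ on $\Gamma_{2d}$; the projections are computed explicitly via the triality realization $\frakg_2\cong\so(8,\CC)^\partial$ and reduction to $\so(4,4)$ (Lemma~\ref{lemma:G2_projection}), and the kernel dimension is pure linear algebra on a diagonal matrix and a Kac--Sylvester tridiagonal matrix (Proposition~\ref{prop:joint_kernel}). In particular, in the paper the parameter $\lambda_2'$ arises from retaining only the \emph{integer} zeros of the rescaled polynomial coming from the factor $\tfrac32$ in $P_\beta(Z_{\alpha_2})$, whereas in your plan it is supposed to emerge from the step-size-$3$ lattice geometry.

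However, there is a genuine gap exactly where you locate the difficulty: your step 3 is never carried out, and it is not bookkeeping but the entire content of the theorem. You never evaluate the weight multiplicities $N(p,q)$ of $V(\lambda)$ for general $(\lambda_1,\lambda_2)$ along the slices $q\in\{0,2\}$ (part (2)) and $p\in\{0,2\}$ (part (1)). For $\frakg_2$ these are piecewise polynomial, chamber- and parity-dependent functions (of degree up to four, via Kostant's partition function), with no simple closed form; the assertion that the signed four-term sum ``telescopes to a count of lattice points on a segment'' yielding precisely the thresholds $b_1+b_2\ge d$, $b_1+b_2'\ge d$ and the floors $\left[\tfrac{b_1+b_2-d+2}{2}\right]$, $\left[\tfrac{b_1+b_2'-d+2}{2}\right]$ is an unproven claim equivalent in substance to the statement being proved, and nothing in your write-up produces the factor-$3$ compression $\lambda_2'$, let alone its extra shift when $\lambda_2\equiv 1\bmod 3$. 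Moreover, the computational route you sketch for part (2) rests on a false identification: $\sl(2,\CC)_\beta$ is the highest-root (regular) $\sl(2)$ inside the long-root $\sl(3,\CC)$, not its principal $\sl(2)$ (the $3$-dimensional representation restricts to $\Gamma_1\oplus\Gamma_0$ under the former and to $\Gamma_2$ under the latter), so the invariant count you propose there would have to be redone. Until the $N(p,q)$ are actually computed and the alternating sum is evaluated in all parity and chamber cases, the theorem is not proved.
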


Let $\frakl_\beta:=\CC\beta^\vee\oplus \sl(2,\CC)_{\alpha_2}$ and $\frakl_{\alpha_2}:=\sl(2,\CC)_\beta\oplus \CC {\alpha_2^\vee}$.
\begin{thmalph}[See Theorem~\ref{thm:twistor_mult_G2}]
	\begin{enumerate}
		\item Let $\lambda\in \Lambda$.
                  The representation $V(\lambda)$ contains a $\frakl_{\alpha_2}$-fixed vector.
		The space of $\frakl_{\alpha_2}$-fixed vectors has dimension $m_{\alpha_2}(\lambda)$, given by 
		$$
		m_{\alpha_2}(\lambda)=\left[\frac{(\lambda_1+1)(\lambda_2+1)+1}{2}\right].
		$$
		\item 	Let $\lambda\in \Lambda$. The representation $V(\lambda)$ contains a $\frakl_\beta$-fixed vector if and only if $\lambda_2\neq 1$.
		In this case the space of $\frakl_\beta$-fixed vectors has dimension $m_\beta(\lambda)$, given by 
		$$
		m_\beta(\lambda)=\left[\frac{(\lambda_1+1)(\lambda'_2+1)+1}{2}\right].
		$$
	\end{enumerate}
For the corresponding statements in the classical cases we refer the reader to Theorem~\ref{thm:Knapp_U}, Theorem~\ref{thm:twistor_mult_U} for case of unitary groups, Theorem~\ref{thm:Knapp_O}, Theorem~\ref{thm:twistor_mult_O} for the case of orthogonal groups and  Theorem~\ref{thm:Knapp_Sp}, Corollary~\ref{cor:twistor_mult_Sp} for the case of symplectic groups.
\end{thmalph}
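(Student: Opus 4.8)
The plan is to reduce both statements to the $\frakk$-branching multiplicities already computed in Theorem~\ref{thm:mult_G2_short} and Theorem~\ref{thm:mult_G2_long}, and then to evaluate the resulting sums in closed form. First I would write the $\frakk$-decomposition
$$V(\lambda)|_\frakk=\bigoplus_{a,b\ge 0}N_{a,b}\,\Gamma_a\boxtimes\Gamma_b,$$
where the first factor refers to $\sl(2,\CC)_\beta$ and the second to $\frakm_c=\sl(2,\CC)_{\alpha_2}$. Since $\sl(2,\CC)_{\alpha_2}$ commutes with $\sl(2,\CC)_\beta$, the space $V(\lambda)^{\sl(2,\CC)_\beta}=\bigoplus_b N_{0,b}\,\Gamma_b$ is an $\sl(2,\CC)_{\alpha_2}$-module, and a vector is $\frakl_{\alpha_2}$-fixed exactly when it lies in this space and has vanishing $\alpha_2^\vee$-weight. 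As $\Gamma_b$ carries a one-dimensional zero-weight space precisely when $b$ is even, I obtain $m_{\alpha_2}(\lambda)=\sum_{d\ge 0}N_{0,2d}$. The symmetric argument, exchanging the two ideals of $\frakk$, gives $m_\beta(\lambda)=\sum_{d\ge 0}N_{2d,0}$.

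Inserting the branching formulas, Theorem~\ref{thm:mult_G2_short} identifies $N_{0,2d}$ with the multiplicity $m_{\alpha_2}(\lambda,d)=\left[\frac{b_1+b_2-d+2}{2}\right]$ of $\mathbf{1}_\beta\boxtimes\Gamma_{2d}$ (and $N_{0,b}=0$ for $b$ odd), while Theorem~\ref{thm:mult_G2_long} identifies $N_{2d,0}$ with $m_\beta(\lambda,d)=\left[\frac{b_1+b_2'-d+2}{2}\right]$. Thus the theorem is equivalent to the summation identity
$$\sum_{d\,:\,b_1+b_2\ge d}\left[\frac{b_1+b_2-d+2}{2}\right]=\left[\frac{(\lambda_1+1)(\lambda_2+1)+1}{2}\right]$$
together with the same identity for $m_\beta$, obtained by replacing $(\lambda_2,b_2)$ with $(\lambda_2',b_2')$.

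To evaluate the left-hand side I would use that $b_i=\min\{\lambda_i,d-\epsilon_i\}$ equals $\lambda_i$ once $d\ge\lambda_i$ and equals the largest integer $\le d$ of the parity of $\lambda_i$ when $d<\lambda_i$; splitting the range of $d$ at $\min\{\lambda_1,\lambda_2\}$ and $\max\{\lambda_1,\lambda_2\}$ turns each piece into a sum of floor functions over an arithmetic progression. I expect the parity bookkeeping---keeping the floors consistent across the four parity classes of $(\lambda_1,\lambda_2)$---to be the main technical obstacle; the small cases $\lambda_1=\lambda_2=0$ and $\lambda_1=\lambda_2=1$, which give $1$ and $2$ respectively, serve as useful consistency checks. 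This is precisely the computation underlying the non-$\frakg_2$ twistor result, so it may be imported from the proof of Theorem~\ref{thm:twistor_mult_not_G2} rather than repeated.

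It then remains to read off the two assertions. Since $(\lambda_1+1)(\lambda_2+1)\ge 1$, the right-hand side is always at least $1$, so $V(\lambda)$ always carries an $\frakl_{\alpha_2}$-fixed vector, proving (1). For (2) the only special feature is $\lambda_2=1$: here the definition forces $\lambda_2'=\left[\frac13\right]-1=-1$, hence $b_2'=-1$ for every $d$; since $b_1\le d-\epsilon_1\le d$ always, the constraint $b_1+b_2'\ge d$ then fails for all $d$ and $m_\beta(\lambda)=0$, consistent with $\left[\frac{(\lambda_1+1)\cdot 0+1}{2}\right]=0$. For $\lambda_2\ne 1$ one has $\lambda_2'\ge 0$ and the identity applies verbatim, yielding both the dimension formula and the criterion $\lambda_2\ne 1$ for the existence of an $\frakl_\beta$-fixed vector.
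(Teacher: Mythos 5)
Your proposal is correct and follows essentially the same route as the paper: restriction in stages $\frakg\supseteq\frakk\supseteq\frakl$, noting that each $\frakk$-constituent with trivial factor on one $\sl(2,\CC)$-ideal contributes exactly its one-dimensional zero-weight space, then summing the multiplicities from Theorem~\ref{thm:mult_G2_short} (resp.\ Theorem~\ref{thm:mult_G2_long}) via the same parity-split computation as in Theorem~\ref{thm:twistor_mult_not_G2}, applied with $\lambda_2'$ in place of $\lambda_2$ in the long-root case. Your explicit treatment of the edge case $\lambda_2=1$ (where $\lambda_2'=-1$ forces $b_2'=-1$ and kills every constituent $\Gamma_{2d}\boxtimes\mathbf{1}_{\frakm_c}$) fills in a detail the paper leaves implicit, and is consistent with its statement.
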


\subsection*{Methods}
The Cartan-Helgason theorem has basically two kinds
of proofs, analytic and algebraic; see e.g.
\cite[Theorem 4.1, p.~535]{Hel00}, \cite[Theorem 8.4, p.~544]{Kna02},
\cite{Kos04}.
The analytic proof \cite{Hel00}
related
the problem about finding $K$-invariant
vectors in representations
of $G_c$ to the induced representations
of a real form $G$ of $G_{\mathbb C}$. More precisely
let $G/K$ be a non-compact symmetric
space with $\fg_0$
a real form of $\fg$.
Let $\fg_0=\fp_0+\fk_0$
be the Cartan decomposition and
$\fa_0\subset \fp_0$ a Cartan subalgebra
of $\fp_0$. Let $KAN$ be
the Iwasawa decomposition of $G$
and $MAN$ the minimal parabolic
subgroup. Consider the
induced representation of $G$
from $MAN$ with  a generic  character of $A$.
Realizing the induced representation
as functions on $G/K$ the $K$-invariant
element becomes the Harish-Chandra
spherical function. Given a finite dimensional
representation $V(\lambda)$ of $G$
for a dominant $\lambda$ the analytic proof starts with a construction
of $K$-invariant vector by integrating
the highest weight vector over $K$.
That such vector is non-zero is
proved by using the non-vanishing
of the Harish-Chandra $c$-function
at the dominate points $\lambda$;
see loc. cit.. The similar proof
is used in \cite{Sch84} for Hermitian
symmetric pairs.

The algebraic proof and its generalization
by Kostant \cite{Kos04} is roughly speaking
a  reformulation of the induced representation
and by parametrization of the representation
space as a quotient space of the universal
enveloping algebra of $\fk$.
The problem of finding the multiplicity
of $(W(\mu), \fk)$ in
$(V(\lambda), \fg)$ is 
reduced to computing certain projections
of root vectors and solving
some algebraic equations in the given
representation space  $(W(\mu), \fk)$.
We shall apply Kostant's theorem and
find a precise multiplicity formula.

\subsection*{Further questions and related results}

We remark that there is an important
class of infinite dimensional representations, called
minimal representations, which have
been studied extensively in particular
for their branching. For quaternionic
non-compact symmetric pairs $(\fg_0, \fk_0)$
Gross and Wallach \cite{GW96} have found the minimal representations
as continuation of quaternionic discrete series.
In a certain sense our finite dimensional
representations are finite dimensional  versions
of  quaternionic discrete series and it might
be interesting to study  branching problems for this
family of representations under
subgroups.
As mentioned above our results are naturally
related to induced representations and we plan
to study them in future.
We note that when $(\fg, \fk)$ is rank one some more 
analytic results have been studied in 
\cite{Cam05} and \cite{vDP99}. For classical symmetric pairs
$(G, K_2\times K_1)=
({\rm U}(n+m), {\rm U}(n)\times {\rm U}(m)), 
(\SO(n+m), \SO(n)\times \SO(m)), (\Sp(n+m), \Sp(n)\times \Sp(m))$, with $n\le m$, Knapp
\cite{Kna01} has proved certain duality relations between
multiplicity spaces for
the  pairs $(G, K_2\times K_1)$ and
$(G', K')=({\rm U}(n)\times {\rm U}(n), {\rm U}(n)), 
(\SO(n)\times \SO(n), \SO(n)),
(\Sp(n)\times \Sp(n), \Sp(n))$, with $n\le m$;
more precisely let $(V, G)$ be an irreducible
representation of $G$ and $V^{K_1}$ be
the subspace of $K_1$-fixed vectors.
Then there exists a representation $(W, G')$
of $G'$ such that the two representations
$(V|_{K_2}, K_2)$ and $(W|_{K_2}, K_2)$
are equivalent. We can then deduce
our multiplicity formula for
classical symmetric pairs using
this duality; see Section \ref{sec:classical} below.
There are some general
Littlewood formulas
\cite{GW98} expressing 
multiplicities as sums of Littlewood-Richards coefficients.
Our main contribution to the classical cases is an exact formula
for the quaternionic symmetric pairs.

\subsection*{Organization of the paper}
The paper is organized as follows. In Section~\ref{sec:prelims} we recall some facts about quaternionic symmetric pairs $(\frakg,\frakk)$ and the real forms $\frakg_0$, as well as Kostant's theorem for the branching to symmetric subgroups \cite{Kos04}.
The proofs of the main theorems in Section~\ref{sec:proofs} use some explicit projections to the ideal $\sl(2,\CC)_\beta\subseteq \frakk$. These are found in Section~\ref{sec:exceptional_projections} and given using reduction to the case $\frakg_0=\so(4,4)$, which is discussed in Section~\ref{sec:SO(4,4)}.
In Section~\ref{sec:classical} we give a brief
account for the same branching problem
for classical quaternionic Lie groups;
most of these  results can be obtained 
from \cite{Kna01}. In the orthogonal case we apply the theorem of Kostant again to prove some explicit restriction formula for representations of ${\rm U}(4)$ to $\SO(4)$.
We include an Appendix, containing some linear algebraic results concerning some special matrices, which is used in the proof of the main theorems (Appendix~\ref{app:linear_algebra}) and relevant tables of quaternionic symmetric pairs, Dynkin diagrams and root datum (Appendix~\ref{app:tables}).

\subsection*{Acknowledgements}
We would like to thank Sangjib Kim for discussions
on  branching problems for  orthogonal groups.
We thank also Robin van Haastrecht for pointing out some incomplete arguments in
the proof of
Proposition \ref{prop:joint_kernel} in an earlier version of this paper, which has
led us to find the current  more conceptual proof. In the beginning of this project we have used the computer program 
LiE to compute the multiplicities for various examples.

\section{Preliminaries}\label{sec:prelims}
\subsection{The quaternionic real form}
We first fix some
general notation and conventions.
Let $G_\CC$ be a simply connected simple complex Lie group and $\frakg$ its Lie algebra. Let $G\subseteq G_\CC$ be a real non-compact
simple subgroup of $G_\CC$ with  Lie algebra $\frakg_0$.
Let $(\frakg_0, \frakk_0)$ be
the non-compact symmetric pair and 
$(\frakg, \frakk)$
the complexified symmetric pair with  Cartan involution $\theta$.
The Killing form
on $\frakg$ will be denoted by $B(\cdot, \cdot)$.
Let $G=KAN$ be the Iwasawa-decomposition of $G$ and  $\fraka,\frakn\subseteq \frakg$  the complexified Lie algebras of $A,N$. Let $M$ be the centralizer of $A$ in $K$ with complexified Lie algebra $\frakm$. 
Every complex simple Lie algebra has a unique quaternionic real form $\frakg_0$ i.e.  $\frakg$ has a unique (up to conjugation) Cartan-involution $\theta$, such that $\frakk=\sl(2,\CC)\oplus \frakm_c$ contains a simple ideal
$\sl(2,\CC)$ with its real form $\frakk_0=\mathfrak{su}(2)\oplus \frakm_0$. 
A list of
$\frakk\subseteq \frakg$
for exceptional simple Lie algebras is given in Table~\ref{tab:2}.
Note that the quaternionic real form is uniquely determined by its real rank, which is denoted as the additional index for $\frakg_0$ in the third column of Table~\ref{tab:2}.
The corresponding list for  classical Lie algebras is in Section~\ref{sec:classical}
along with the study of the branching problem.

 Let $\frakt \subseteq \frakg$ be a maximal compact Cartan subalgebra and let $\Delta(\frakg,\frakt)$ be the associated root system. Let $\Pi(\frakg,\frakt)=\{\delta_1,\dots,\delta_r\}$ be a choice of simple roots.
For any positive root $\delta$ let $\{X_{\delta},X_{-\delta}, H_{\delta}\}$ be the associated $\sl(2,\CC)$-triple, with $H_\delta \in \frakt$, $\delta(H_\delta)=2$ and $[X_\delta,X_{-\delta}]=H_\delta$.
  Let for any root $\delta\in \Delta(\frakg,\frakt)$  $$P_\delta:\frakk \to \sl(2,\CC)_\delta$$ be the parallel projection.
 Let $\beta \in \Delta(\frakg,\frakt)$ be the highest root such that $-\beta$ is the lowest root.
 
 The root system of the maximal subalgebra $\frakk$ is obtained from the affine Dynkin diagram of $\frakg$, which additionally contains the lowest root $-\beta$. In the exceptional cases this root is connected to a unique simple root. Deleting the unique vertex connected to $-\beta$ one obtains the root system of $\frakk$, where a copy of $\sl(2,\CC)$ is now obtained
 by the highest root $\beta$ by the general method. The affine Dynkin diagrams of the relevant cases can be found in Table~\ref{tab:satake}, see also \cite{GW96}.

\subsection{Cayley transform to the maximal split Cartan}
 Let $r_0=\dim_\CC \fraka$. Then $r_0$ is the real rank of $\frakg_0$. Let $\{\gamma_1,\dots, \gamma_{r_0}\}$ be a choice of strongly orthogonal non-compact roots. In particular we have that $\frakk=\sl(2,\CC)_\beta\oplus \frakm_c$.
 Let $$c_{\gamma_j}:=\Ad\left(\exp\left(\frac{\pi}{4}(X_{-\gamma_j}-X_{\gamma_j})\right)\right)$$
 be the Cayley transform associated with $\gamma_j$ and let
 $$c:=c_{\gamma_1}\circ \dots \circ c_{\gamma_{r_0}}$$ be the composition. Note that the transforms $c_{\gamma_i}$ commute, such that there is no need to choose an ordering of $\{\gamma_i\}$.
 
 Let $\frakh:=\fraka \oplus \frakh_\frakm$ be a maximal split Cartan subalgebra, such that $\frakh_\frakm\subseteq \frakt$ is a Cartan subalgebra of $\frakm$. In particular $\frakh=c(\frakt)$ and $c$ fixes $\frakh_\frakm \subseteq\frakt$. Then $c:\frakg \to \frakg$ is an automorphism of $\frakg$, mapping $\frakt$ to $\frakh$ and simple roots and root vectors of $(\frakg,\frakt)$ to simple roots and root vectors of $(\frakg,\frakh)$. 
 We define $$\alpha_i:=c(\delta_i), \qquad 1\leq i\leq r,$$
 such that $\Pi(\frakg,\frakh)=\{\alpha_1,\dots,\alpha_r\}$ is a choice of simple roots for $\Delta(\frakg,\frakh).$ We denote the coroots of $\alpha_j$ by $\alpha_j^\vee \in \frakh$. Note that for non-compact roots the standard $\sl(2,\CC)$-triple is then given
 by $\{\alpha_j^\vee, ic(X_{\delta_j}),ic(X_{-\delta_j})\}$. Moreover note that compact roots in $\Delta(\frakg,\frakh)$ are fixed by $c^{-1}$, such that both notations $\delta$ and $\alpha$ can be used.

 \subsection{Restricted roots and Satake diagrams}
Recall that a root $\alpha\in \Delta(\frakg,\frakh)$ is called \emph{real} if $\alpha|_{\frakh_\frakm}=0$, \emph{imaginary} if $\alpha|_\fraka=0$ and \emph{complex} otherwise. We define the restricted roots $$\bar{\alpha}:=\alpha|_\fraka.$$
The Satake diagram of $(\frakg,\frakh)$ is the Dynkin diagram, where vertices corresponding to imaginary roots are colored black and where vertices with the same restriction to $\fraka$ are joint by a double arrow. The Satake diagrams of the exceptional pairs $(\frakg,\frakh)$ associated to the quaternionic real form and the real roots are given in Table~\ref{tab:satake}.
The simple roots for $\frakm$ are precisely the black colored roots in the Satake diagram from which it is clear that $\frakm\subseteq\frakm_c$.

\subsection{Kostant's multiplicity formula for maximal subalgebras}
Let $\frakm=\frakm^-\oplus \frakh_\frakm \oplus \frakm^+$ be the triangular decomposition of $\frakm$. In particular we have that by our construction $$\frakm^+\oplus \frakn=\sum_{\alpha \in \Delta^+(\frakg,\frakh)}\frakg_\alpha.$$
For any $\alpha_i\in \Pi(\frakg,\frakh)$ let $\varpi_i$ be the corresponding fundamental weight,
such that we identify the set $\Lambda$ of integral dominant weights with $\ZZ_{\geq 0}^r$, via
\begin{equation}
  \label{lambda-param}
\ZZ_{\geq 0}^r \to \Lambda,  \qquad (\lambda_1,\dots,\lambda_r)\mapsto\sum_{i=1}^r \lambda_i\varpi_i. 
\end{equation}
For $\lambda\in \Lambda$, we denote by $V(\lambda)$ the irreducible highest-weight module of $\frakg$ with highest weight $\lambda$. 
We fix a choice of positive $(\frakk,\frakt\cap \frakk)$-roots and we denote the set of integral dominant weights of $\frakk$ by $\Lambda_\frakk$.
For $\mu\in \Lambda_\frakk$ let $W(\mu)$ be any irreducible finite dimensional $\frakk$-module with highest weight $\mu$.
By construction we have that $\lambda|_{\frakh_\frakm}$ is integral dominant for $\frakm$ with respect to $\frakm^+$. We denote by $W(\mu)[\lambda|_{\frakh_\frakm}]$ the isotypic component of highest weight $\lambda|_{\frakm}$ inside the restriction $W(\mu)|_\frakm$. We further denote by 
$W(\mu)[\lambda|_{\frakh_\frakm}]^{\frakm^+}$ the set of $\frakm^+$-highest weight vectors.
We define the multiplicity
$$m(\lambda,\mu)=\dim \Hom (V(\lambda)|_\frakk,W(\mu)).$$
Finally for any simple root $\alpha_j \in\Pi(\frakg,\frakh) \subseteq \Delta(\frakg,\frakh)$ we define
$$
Z_{\alpha_j}=c(X_{\delta_j})+\theta( c(X_{\delta_j}))\in \frakk.
$$

Let $$\Pi_n(\frakg,\frakh):=\{ \alpha_i \in \Pi(\frakg,\frakh)|\; \alpha_i \text{ is not imaginary}  \}.$$
The simple roots $\Pi_n$ can be read off  from Satake diagrams given in Table~\ref{tab:satake}, where the imaginary roots are painted black.
Note that under this normalization we have that $B(Z_{\alpha_j},Z_{\alpha_j})=B(\alpha_j^\vee,\alpha_j^\vee)$ whenever $\alpha_j\in \Pi_n$.
We recall Kostant's theorem on the multiplicity $m(\lambda,\mu)$.
\begin{theorem}[\cite{Kos04} Theorem~0.5]\label{thm:Kostant}
	Let $\lambda \in \Lambda$ and $\mu \in \Lambda_\frakk$.  We have
	$$m(\lambda,\mu)=\dim \{  v \in W(\mu)[\lambda|_{\frakh_\frakm}]^{\frakm^+}|\; q_{\lambda,i}(Z_{\alpha_i})v=0, \, \forall \alpha_i\in \Pi_n(\frakg,\frakh) \},$$
	where $q_{\lambda,i}$ is a polynomial given by
	$$
	q_{\lambda,i}(t)=\begin{cases}
		(t-\lambda_i)(t-(\lambda_i-2))\dots (t+\lambda_i)& \text{if $\alpha_i$ is a real simple root,}\\
		t^{\lambda_i+1} &\text{otherwise.}
	\end{cases}
	$$
\end{theorem}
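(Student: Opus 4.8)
The plan is to realize the multiplicity as a \emph{dual} quantity living inside $W(\mu)$ rather than inside $V(\lambda)$, by exploiting that the $\frakg$-highest weight vector $v_\lambda$ is already cyclic for the subalgebra $\frakk$. With respect to the positive system $\frakn^+=\frakm^+\oplus\frakn$ attached to $\frakh=\fraka\oplus\frakh_\frakm$, the vector $v_\lambda$ is annihilated by $\frakm^+\oplus\frakn=\sum_{\alpha\in\Delta^+(\frakg,\frakh)}\frakg_\alpha$, while $\fraka$ acts by a scalar and $\frakh_\frakm$ by $\lambda|_{\frakh_\frakm}$. Hence the (complexified) Iwasawa PBW decomposition $U(\frakg)=U(\frakk)\,U(\fraka)\,U(\frakn)$ yields $V(\lambda)=U(\frakg)v_\lambda=U(\frakk)v_\lambda$, so $v_\lambda$ is $\frakk$-cyclic. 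Setting $\frakq:=\frakm^+\oplus\frakh_\frakm$ and letting $\chi:=\lambda|_{\frakh_\frakm}$ be the character of $\frakq$ trivial on $\frakm^+$, the assignment $u\otimes 1\mapsto u\,v_\lambda$ defines a surjection of $\frakk$-modules
$$\Phi\colon M:=U(\frakk)\otimes_{U(\frakq)}\CC_\chi\ \twoheadrightarrow\ V(\lambda).$$

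Next I would apply Frobenius reciprocity to this generalized Verma module. For any $\frakk$-module $W$ one has $\Hom_\frakk(M,W)\cong\Hom_\frakq(\CC_\chi,W)=\{w\in W:\frakm^+w=0,\ \frakh_\frakm\text{ acts by }\chi\}$, which for $W=W(\mu)$ is \emph{exactly} the ambient space $W(\mu)[\lambda|_{\frakh_\frakm}]^{\frakm^+}$ in the statement. Precomposing with $\Phi$ identifies $\Hom_\frakk(V(\lambda),W(\mu))$ with those $w$ whose associated map $\tilde\phi_w\colon M\to W(\mu)$ (determined by $\tilde\phi_w(1\otimes 1)=w$) kills $\ker\Phi$; and since $v_\lambda$ is $\frakk$-cyclic, $\phi\mapsto\phi(v_\lambda)$ is injective. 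Thus the theorem reduces to identifying $\ker\Phi$ as a $U(\frakk)$-submodule, namely to the claim
$$\ker\Phi=\sum_{\alpha_i\in\Pi_n(\frakg,\frakh)}U(\frakk)\,q_{\lambda,i}(Z_{\alpha_i})\,(1\otimes 1),$$
because then $\tilde\phi_w(\ker\Phi)=0$ reads off, via $\frakk$-equivariance and $Z_{\alpha_i}\in\frakk$, precisely as the conditions $q_{\lambda,i}(Z_{\alpha_i})w=0$ for all $\alpha_i\in\Pi_n$.

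The inclusion $\supseteq$ is the local, $\sl(2,\CC)$-level computation, i.e.\ the translation of the defining relations $X_{-\alpha_i}^{\lambda_i+1}v_\lambda=0$ into the $\frakk$-picture. Since $c(X_{\delta_i})\in\frakn^+$ kills $v_\lambda$, we have $Z_{\alpha_i}v_\lambda=\theta(c(X_{\delta_i}))v_\lambda$. For a real simple root $\alpha_i$ one has $\theta\alpha_i=-\alpha_i$, so $Z_{\alpha_i}$ lies in $\sl(2,\CC)_{\alpha_i}$ and acts as the ``Cayley-rotated'' Cartan generator $c(X_{\delta_i})+\const\cdot X_{-\alpha_i}$; on the $(\lambda_i+1)$-dimensional submodule $U(\sl(2,\CC)_{\alpha_i})v_\lambda$ its eigenvalues are exactly $\lambda_i,\lambda_i-2,\dots,-\lambda_i$, whence $q_{\lambda,i}(Z_{\alpha_i})v_\lambda=0$. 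For a complex simple root the pair $\{\alpha_i,-\theta\alpha_i\}$ spans a $\theta$-stable $\sl(2,\CC)\oplus\sl(2,\CC)$ interchanged by $\theta$, on which $Z_{\alpha_i}$ becomes nilpotent of degree $\lambda_i+1$, giving $Z_{\alpha_i}^{\lambda_i+1}v_\lambda=0$ and matching $q_{\lambda,i}(t)=t^{\lambda_i+1}$. The imaginary simple roots generate $\frakm$, so their relations are already built into the requirement $w\in W(\mu)[\lambda|_{\frakh_\frakm}]^{\frakm^+}$, which explains why $\Pi_n$ excludes them.

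The reverse inclusion $\subseteq$ is the main obstacle: one must show that these finitely many translated Serre relations \emph{generate} the entire kernel of $\Phi$ over $U(\frakk)$, equivalently that the natural map $M/\sum_i U(\frakk)q_{\lambda,i}(Z_{\alpha_i})(1\otimes 1)\to V(\lambda)$ is an isomorphism. I would attack this by a filtration argument: transport the PBW filtration of $U(\frakk)$ to both $M$ and $V(\lambda)$, pass to associated graded modules over $S(\frakk/\frakq)$, and check that the symbols of the $q_{\lambda,i}(Z_{\alpha_i})$ cut the graded object down to the right size, so that no further relations survive. The delicate point is that the quotient must not collapse and must exhibit the correct finite $\frakk$-type multiplicities; controlling the mutual interaction of the separate $Z_{\alpha_i}$ (where the $\theta$-stability of the rank-one subalgebras and the strong orthogonality of the $\gamma_j$ enter) is exactly the hard kernel-generation statement that constitutes the content of Kostant's theorem. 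Once this is in place, combining it with the adjunction of the first two paragraphs gives the multiplicity as the dimension of the indicated space of vectors in $W(\mu)$.
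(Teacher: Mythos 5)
The paper itself offers no proof of this statement: it is quoted directly from Kostant \cite{Kos04}, Theorem~0.5, and used as a black box throughout. So your attempt must be judged as a standalone proof, and as such it has a genuine gap, exactly where you acknowledge one. Your first three paragraphs are correct but essentially formal: the Iwasawa/PBW argument giving $V(\lambda)=U(\frakk)v_\lambda$, the Frobenius-reciprocity identification $\Hom_\frakk\bigl(U(\frakk)\otimes_{U(\frakq)}\CC_\chi,\,W(\mu)\bigr)\cong W(\mu)[\lambda|_{\frakh_\frakm}]^{\frakm^+}$, and the rank-one computation $q_{\lambda,i}(Z_{\alpha_i})v_\lambda=0$ (for real $\alpha_i$ this is the fact that $X_{\alpha_i}+X_{-\alpha_i}$ is conjugate inside $\sl(2,\CC)_{\alpha_i}$ to $\alpha_i^\vee$, hence has spectrum $\{\lambda_i,\lambda_i-2,\dots,-\lambda_i\}$ on the $(\lambda_i+1)$-dimensional module generated by $v_\lambda$). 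This correctly reduces the theorem to the single claim that $\ker\Phi$ is generated over $U(\frakk)$ by the elements $q_{\lambda,i}(Z_{\alpha_i})(1\otimes 1)$, $\alpha_i\in\Pi_n(\frakg,\frakh)$.

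That claim is the entire content of Kostant's theorem, and you do not prove it. Your filtration sketch is a plan, not an argument, and it has a concrete obstruction: in the associated graded over $S(\frakk)$ the symbol of $q_{\lambda,i}(Z_{\alpha_i})$ is the pure power of the image of $Z_{\alpha_i}$, so passing to leading terms destroys exactly the lower-order information that distinguishes the real case $(t-\lambda_i)(t-\lambda_i+2)\cdots(t+\lambda_i)$ from the complex case $t^{\lambda_i+1}$; since the resulting multiplicities genuinely differ between these two cases, no bookkeeping of symbols alone can identify the quotient with $V(\lambda)$ -- one needs the finer structure (the interaction of the subalgebra generated by the $Z_{\alpha_i}$ with $\frakm$, and a sharp dimension or multiplicity count), which is where the bulk of Kostant's paper lives. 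A secondary soft spot: for complex simple roots you assert $Z_{\alpha_i}$ is nilpotent of degree $\lambda_i+1$ via an $\sl(2,\CC)\oplus\sl(2,\CC)$ picture, but $[c(X_{\delta_i}),\theta c(X_{\delta_i})]$ need not vanish, so even this ``easy'' inclusion is not complete as written. In short: your proposal reconstructs the formal frame of Kostant's argument and then stops at its hard core, so it cannot serve as a proof of the theorem.
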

From Kostant's theorem we can readily deduce the following in the exceptional quaternionic setting.
\begin{corollary}\label{cor:Kostant_quaternionic}
	Let $\lambda \in \Lambda$ and $\Gamma_m$ the $m+1$ dimensional irreducible representation of $\sl(2,\CC)$. Consider the $\frakk=\sl(2,\CC)\oplus \frakm_c$-representation $\Gamma_m\boxtimes \mathbf{1}_{\frakm_c}$.
	Then $V(\lambda)|_\frakk$ contains the representation $\Gamma_m\boxtimes \mathbf{1}_{\frakm_c}$ only if 
	\begin{enumerate}[label=(\roman*)]
		\item $\lambda|_{\frakh_\frakm}=0$,
		\item $\lambda(\alpha_j^\vee)\equiv 0 \pmod 2$ for all real simple roots $\alpha_j$ with 
		$P_\beta(Z_{\alpha_j})=0.$
	\end{enumerate}
	In this case the multiplicity is given by
	$$\dim \{ v \in \Gamma_m|\, q_{\lambda,i}(P_\beta(Z_{\alpha_i}))v =0, \, \forall \alpha_i \in \Pi_n(\frakg,\frakh) \}.$$
\end{corollary}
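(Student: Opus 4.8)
The plan is to specialize Kostant's theorem (Theorem~\ref{thm:Kostant}) to the module $W(\mu)=\Gamma_m\boxtimes \mathbf{1}_{\frakm_c}$ and to exploit that $\frakm_c$, and hence $\frakm\subseteq \frakm_c$, acts trivially. First I would observe that since $\frakm$ acts by zero on $\Gamma_m\boxtimes \mathbf{1}_{\frakm_c}$, the restriction $W(\mu)|_\frakm$ is a multiple of the trivial $\frakm$-module. Consequently the isotypic component $W(\mu)[\lambda|_{\frakh_\frakm}]$ is nonzero precisely when $\lambda|_{\frakh_\frakm}=0$, and in that case it equals all of $\Gamma_m$; this yields the necessity of condition (i). Moreover, as $\frakm^+\subseteq \frakm_c$ also acts by zero, every vector is an $\frakm^+$-highest weight vector, so $W(\mu)[\lambda|_{\frakh_\frakm}]^{\frakm^+}=\Gamma_m$ when (i) holds (and is empty otherwise).

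Next I would analyze the operators $q_{\lambda,i}(Z_{\alpha_i})$. Decomposing $Z_{\alpha_i}\in \frakk=\sl(2,\CC)_\beta\oplus \frakm_c$ according to this direct sum, and using once more that $\frakm_c$ acts trivially on $\Gamma_m\boxtimes \mathbf{1}_{\frakm_c}$, the element $Z_{\alpha_i}$ acts on $\Gamma_m$ exactly as its projection $P_\beta(Z_{\alpha_i})\in \sl(2,\CC)_\beta$ does. Hence $q_{\lambda,i}(Z_{\alpha_i})$ acts as $q_{\lambda,i}(P_\beta(Z_{\alpha_i}))$, and Kostant's formula collapses to
\[
m(\lambda,\mu)=\dim\{\,v\in \Gamma_m \mid q_{\lambda,i}(P_\beta(Z_{\alpha_i}))v=0,\ \forall\, \alpha_i\in \Pi_n(\frakg,\frakh)\,\},
\]
which is the asserted multiplicity formula.

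Finally I would extract condition (ii). For a real simple root $\alpha_j$ with $P_\beta(Z_{\alpha_j})=0$, the corresponding operator is the scalar $q_{\lambda,j}(0)$, so a nonzero common solution $v$ can exist only if $q_{\lambda,j}(0)=0$. Evaluating the real-root polynomial at $0$ gives $q_{\lambda,j}(0)=\prod_{k=0}^{\lambda_j}(2k-\lambda_j)$, which vanishes if and only if $0$ occurs among the values $\lambda_j,\lambda_j-2,\dots,-\lambda_j$, i.e. if and only if $\lambda_j=\lambda(\alpha_j^\vee)$ is even; this establishes the necessity of (ii). (For a non-imaginary simple root that is complex rather than real, $q_{\lambda,j}(t)=t^{\lambda_j+1}$ already vanishes at $t=0$, so no further condition arises.) The content of this corollary is thus essentially the reduction afforded by $\frakm_c$-triviality; the genuine constraints coming from those $\alpha_i$ with $P_\beta(Z_{\alpha_i})\neq 0$ require the explicit computation of the projections $P_\beta(Z_{\alpha_i})$, which is the real obstacle and is carried out in Section~\ref{sec:exceptional_projections}.
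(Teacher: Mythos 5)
Your proposal is correct and follows essentially the same route as the paper: specialize Kostant's theorem to $\Gamma_m\boxtimes\mathbf{1}_{\frakm_c}$, use the triviality of the $\frakm_c$-action to identify the $\frakm^+$-highest weight vectors of the correct $\frakh_\frakm$-weight with all of $\Gamma_m$ (forcing condition (i)), reduce $q_{\lambda,i}(Z_{\alpha_i})$ to $q_{\lambda,i}(P_\beta(Z_{\alpha_i}))$, and extract condition (ii) from the scalar case $P_\beta(Z_{\alpha_j})=0$. The only (immaterial) difference is that you argue at the level of operators on the representation, where $Z_{\alpha_i}$ and $P_\beta(Z_{\alpha_i})$ act identically, while the paper phrases the same reduction inside the enveloping algebra by writing $q_{\lambda,j}(Z_{\alpha_j})v=q_{\lambda,j}(P_\beta(Z_{\alpha_j}))v+RXv$ with $Xv=0$.
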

\begin{proof}
	We apply Theorem~\ref{thm:Kostant} to the setting of an exceptional complex quaternionic symmetric pair $(\frakg,\frakk)$.
    Since $\frakh_{\frakm}\subseteq\frakm\subseteq\frakm_c$ it is clear that $$(\Gamma_m\boxtimes \mathbf{1}_{\frakm_c})[\lambda|_{\frakh_{\frakm}}]=\Gamma_m\boxtimes (\mathbf{1}_{\frakm_c}[\lambda|_{\fh_\fm}])=\{0\}$$
	if $\lambda|_{\frakh_{\frakm}}\neq 0$. Now assume $\lambda|_{\frakh_\frakm}=0$. Then every element of $\Gamma_m\boxtimes \mathbf{1}_{\frakm_c}$ is a highest weight vector for $\frakm$, since $\frakm$ only acts on the trivial representation $\mathbf{1}_{\frakm_c}$ of $\frakm_c$.
	Now assume $P_\beta(Z_{\alpha_j})=0$ for some $\alpha_j\in \Pi_n(\frakg,\frakh)$, i.e $Z_{\alpha_j}\in \frakm_c$. Then
	$Z_{\alpha_j}v=0$ for each $v\in \Gamma_m\boxtimes\mathbf{1}_{\frakm_c}$, such that 
	$$p_{\lambda,j}(Z_{\alpha_j})v=p_{\lambda,j}(0)v,$$
	which vanishes if and only if $\alpha_j$ is complex or $\alpha_j$ is real and $\lambda_j$ is even.
	Now assume $P_\beta(Z_{\alpha_j})\neq 0$, such that $Z_{\alpha_j}=P_\beta(Z_{\alpha_j})+X$ with $X\in \frakm_c$.
	Then clearly for any $v\in \Gamma_m\boxtimes \mathbf{1}_{\frakm_c}$ we have 
	$$p_{\lambda,j}(Z_{\alpha_j})v=p_{\lambda,j}(P_\beta(Z_{\alpha_j}))v+RXv=p_{\lambda,j}(P_\beta(Z_{\alpha_j}))v$$
	for some polynomial $R$ of $P_\beta(Z_{\alpha_j}) $ and $X$. Hence  $p_{\lambda,j}(Z_{\alpha_j})v$
	 only depends on its projection to $\sl(2,\CC)_\beta$ on the $\Gamma_m$-factor.
\end{proof}
 \begin{lemma}\label{lemma:root_vector_projection}
	Let $\gamma\in \Delta(\frakg,\frakt)$ be a root. If $\delta_j
        \notin \langle \gamma_1, \dots, \gamma_{r_0},
        \gamma\rangle_\ZZ$, then        $$P_\gamma(Z_{\alpha_j})=0.$$
\end{lemma}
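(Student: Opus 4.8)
The plan is to track the $\frakt$-weights appearing in $Z_{\alpha_j}$ and to exploit that the parallel projection $P_\gamma$ can only detect the weights $\{0,\gamma,-\gamma\}$ of $\sl(2,\CC)_\gamma$. First I would analyze $c(X_{\delta_j})$. Writing $c=c_{\gamma_1}\circ\cdots\circ c_{\gamma_{r_0}}$ with $c_{\gamma_i}=\exp(\ad Y_i)$ and $Y_i=\frac{\pi}{4}(X_{-\gamma_i}-X_{\gamma_i})\in\sl(2,\CC)_{\gamma_i}$, each $\ad Y_i$ raises or lowers the $\frakt$-weight by $\gamma_i$, so $(\ad Y_i)^k$ shifts a weight by an element of $\ZZ\gamma_i$. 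Expanding the exponentials, $c(X_{\delta_j})$ is therefore a finite sum of $\frakt$-weight vectors all of whose weights lie in $\delta_j+\langle\gamma_1,\dots,\gamma_{r_0}\rangle_\ZZ$.

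Next I would handle the $\theta$-term. Since $\frakt\subseteq\frakk$ is fixed pointwise by the Cartan involution, $\theta$ commutes with $\ad H$ for $H\in\frakt$ and hence preserves every root space, $\theta(\frakg_\nu)=\frakg_\nu$. Applying this to each weight component of $c(X_{\delta_j})$ shows that $\theta(c(X_{\delta_j}))$, and therefore $Z_{\alpha_j}=c(X_{\delta_j})+\theta(c(X_{\delta_j}))$, is again a sum of $\frakt$-weight vectors with weights in $\delta_j+\langle\gamma_1,\dots,\gamma_{r_0}\rangle_\ZZ$. (That $Z_{\alpha_j}\in\frakk$ is automatic, being of the form $X+\theta X$, so $P_\gamma(Z_{\alpha_j})$ makes sense.)

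Then comes the projection step. The subalgebra $\sl(2,\CC)_\gamma=\CC X_\gamma\oplus\CC H_\gamma\oplus\CC X_{-\gamma}$ is $\ad\frakt$-stable with $\frakt$-weights $\gamma,0,-\gamma$, and the complementary subspace defining the parallel projection may likewise be taken $\ad\frakt$-stable; consequently $P_\gamma$ respects the $\frakt$-weight decomposition and annihilates every weight vector whose weight is not in $\{0,\gamma,-\gamma\}$. It therefore suffices to check that no weight $\nu\in\delta_j+\langle\gamma_1,\dots,\gamma_{r_0}\rangle_\ZZ$ occurring in $Z_{\alpha_j}$ equals $0$, $\gamma$, or $-\gamma$. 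Each of these three equalities would force $\delta_j\in\langle\gamma_1,\dots,\gamma_{r_0},\gamma\rangle_\ZZ$, contradicting the hypothesis; hence $P_\gamma(Z_{\alpha_j})=0$.

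The only genuinely delicate point is pinning down the parallel projection precisely enough to justify that it is $\ad\frakt$-equivariant, equivalently weight-preserving; once that compatibility is in hand the conclusion is a short lattice argument. I expect no serious obstacle beyond this, since the strong orthogonality of the $\gamma_i$ and the one-dimensionality of the root spaces $\frakg_{\pm\gamma}$ make the weight bookkeeping completely explicit.
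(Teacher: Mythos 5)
Your proof is correct and follows essentially the same route as the paper's: expand the Cayley transforms to see that $Z_{\alpha_j}$ is a sum of $\frakt$-weight vectors with weights in $\delta_j+\langle\gamma_1,\dots,\gamma_{r_0}\rangle_\ZZ$, then use the lattice hypothesis to rule out the only weights $0,\gamma,-\gamma$ that $P_\gamma$ can detect. If anything, you are slightly more explicit than the paper, which only invokes $0,\gamma\notin\delta_j+\langle\gamma_1,\dots,\gamma_{r_0}\rangle_\ZZ$ and leaves the exclusion of $-\gamma$ and the $\ad\frakt$-equivariance of the parallel projection implicit; your version makes those points airtight without changing the argument.
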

\begin{proof}
	Let $\delta=\delta_j$ be a simple $(\frakg,\frakt)$-root.
	By expanding the exponentials defining $c_{\gamma_i}$, it is easy to see that
	$$c(X_\delta)=\sum_{\delta'}a_{\delta'}X_{\delta'},$$ 
	for some scalars $a_{\delta'}$, where the sum is to be taken
        over $\delta'\in\delta+ \langle \gamma_1, \dots,
        \gamma_{r_0}\rangle_\ZZ\subseteq \Delta(\frakg,\frakt)$. In
        particular no contribution of Cartan elements $H_{\delta'}$
        occurs
        in the projection
        $P_\gamma(Z_{\alpha_j})$, since $0\notin \delta+ \langle \gamma_1, \dots, \gamma_{r_0}\rangle_\ZZ$. Then $$c(X_\delta)+\theta(c(X_\delta))=\sum_{\delta'}a_{\delta'}(X_{\delta'}+\theta X_{\delta'})$$
	and since $\gamma \notin \delta+ \langle \gamma_1, \dots, \gamma_{r_0}\rangle_\ZZ,$ we have
	$$P_{\gamma}(X_{\delta'}+\theta X_{\delta'})=0,$$
	for $\delta' \in \delta+ \langle \gamma_1, \dots, \gamma_{r_0}\rangle_\ZZ,$
	which proves the statement.
\end{proof}

\section{Explicit projections for $\SO(4,4)$}\label{sec:SO(4,4)}
The proof of the main theorems makes use of a reduction to the case $\frakg_0=\so(4,4)$.
Let $\frakg=\so(8,\CC)$ and $\frakg_0=\so(4,4)$ such that 
$\frakk=\so(4,\CC)\oplus \so(4,\CC)$. Let $\frakt\subseteq \frakk\subseteq\frakg$ be a maximal compact Cartan subalgebra and $\Pi(\frakg,\frakt)=\{\delta_1,\dots,\delta_4\}$ be a choice of simple roots. 
We keep our convention such that roots of a maximal compact Cartan $\frakt$ are denoted by $\delta$'s, while roots for a maximal split Cartan $\frakh$ are denoted by $\alpha$'s.
Let $\beta$ be the highest root. Then we have the following affine Dynkin-diagram,
\begin{equation}\label{eq:D4}
\dynkin[labels={-\beta,\delta_1,\delta_2, \delta_3, \delta_4}]D[1]4
\end{equation}
In the standard realization we have that $$\Delta(\frakg,\frakt)^+=\{\varepsi_i\pm\varepsi_j|\; i<j\leq4\},$$
$$\delta_1=\varepsi_1-\varepsi_2,\qquad \delta_2=\varepsi_2-\varepsi_3, \qquad \delta_3=\varepsi_3-\varepsi_4,\qquad \delta_4=\varepsi_3+\varepsi_4$$
and $\beta=\delta_1+2\delta_2+\delta_3+\delta_4=\varepsi_1+\varepsi_2$ is the highest root and is compact.
We choose the following non-compact strongly orthogonal roots $\{\gamma_1,\dots,\gamma_4\}$:
\begin{equation}\label{eq:D4_SOR}
	\gamma_1=\varepsi_2-\varepsi_3, \qquad \gamma_2=\varepsi_2+\varepsi_3, \qquad \gamma_3=\varepsi_1-\varepsi_4,\qquad \gamma_4=\varepsi_1+\varepsi_4.
\end{equation}

\begin{prop+}\label{prop:D4_projections}
	\begin{enumerate}
		\item 	We have
		$$P_\beta(Z_{\alpha_j})=
		\begin{cases}
			-\frac{1}{2}H_\beta & \text{if $j=2$,}\\
			\frac{1}{2}(X_\beta+X_{-\beta}) & \text{otherwise.}
		\end{cases}
		$$
		\item		We have for $i=1,3,4$
		$$P_{\delta_i}(Z_{\alpha_j})=
		\begin{cases}
			\frac{1}{2}H_{\delta_i} & \text{if $j=2$,}\\
			-\frac{1}{2}(X_{\delta_i}+X_{-\delta_i})& \text{if $i=j$,}\\
			0& \text{otherwise.}
		\end{cases}
		$$
	\end{enumerate}
\end{prop+}
\begin{proof}
	Ad (1):
	Since $\delta_2=\gamma_1$ is a strongly orthogonal non-compact root itself,
	$$c_{\gamma_j}(X_{\gamma_1})=X_{\gamma_1}$$ for all $j\neq 1$ and by \cite[(6.66)]{Kna02} that
	$$c_{\gamma_1}(X_{\gamma_1})=-\frac{1}{2}H_{\gamma_1}+\frac{1}{2}(X_{\gamma_1}-X_{-\gamma_1}).
	$$
	Since $\gamma_1$ is non-compact it follows that $c(X_{\delta_2})+\theta c(X_{\delta_2})=-H_{\gamma_1}$. Since $$\gamma_1=\frac{1}{2}(\beta-\delta_1-\delta_3-\delta_4)$$
	and $\beta,\delta_1,\delta_3,\delta_4$ are compact, this implies the statement for $\delta_2$.
	
	Now let $\delta=\delta_1$. Then it is easy to see that $$\ad(X_{-\gamma_1}-X_{\gamma_1})X_{\delta}=-X_{\delta+\gamma_1}, \qquad \ad^2(X_{-\gamma_1}-X_{\gamma_1})X_{\delta}=-X_{\delta},$$
	such that 
	\begin{align*}
		c_{\gamma_1}(X_{\delta})&=\sum_{j=0}^\infty \frac{1}{(2n)!}\left(\frac{\pi}{4}\right)^{2n}(-1)^nX_{\delta}+\sum_{j=0}^\infty \frac{1}{(2n+1)!}\left(\frac{\pi}{4}\right)^{2n+1}(-1)^nX_{\delta+\gamma_1}\\
		&=\cos\left(\frac{\pi}{4}\right)X_{\delta}+\sin\left(\frac{\pi}{4}\right)X_{\delta+\gamma_1}\\
		&=\frac{1}{\sqrt{2}}(X_{\delta}+X_{\delta+\gamma_1}).
	\end{align*}
First note that $\beta=\delta_1+\gamma_1+\gamma_2$. We  have then
$$\ad(X_{-\gamma_2}-X_{\gamma_2})X_{\delta+\gamma_1}=-X_{\delta+\gamma_1+\gamma_2}=-X_\beta, \qquad \ad^2(X_{-\gamma_2}-X_{\gamma_2})X_{\delta+\gamma_1}=-X_{\delta+\gamma_1},$$

$$\ad(X_{-\gamma_3}-X_{\gamma_3})X_{\beta}=X_{\beta-\gamma_3}, \qquad \ad^2(X_{-\gamma_3}-X_{\gamma_3})X_{\beta}=-X_{\beta},$$

$$\ad(X_{-\gamma_4}-X_{\gamma_4})X_{\beta}=X_{\beta-\gamma_4}, \qquad \ad^2(X_{-\gamma_4}-X_{\gamma_4})X_{\beta}=-X_{\beta}.$$
Hence by a similar calculation than before we have
$$\frac{1}{\sqrt{2}}P_\beta(c_{\gamma_2}\circ c_{\gamma_3}\circ c_{\gamma_4}(X_{\delta+\gamma_1}))=\frac{1}{4}X_\beta.$$

On the other hand we have $-\beta=\alpha_1-\gamma_3-\gamma_4$ and by the same argument we obtain
$$\frac{1}{\sqrt{2}}P_\beta(c_{\gamma_2}\circ c_{\gamma_3}\circ c_{\gamma_4}(X_{\delta}))=\frac{1}{4}X_{-\beta},$$
such that
$$P_\beta(c(X_\delta)+\theta c(X_{\delta}))=\frac{1}{2}(X_\beta+X_{-\beta}),$$
since $X_\beta \in \frakk$.
By the triality of $D_4$ the statement also follows for $\delta_3$ and $\delta_4$.

Ad (2):
	The proof is essentially the same as the proof of (1).
\end{proof}
\section{Projections for exceptional quaternionic Lie algebras}\label{sec:exceptional_projections}
As before we shall use the convention that roots of a maximal compact Cartan $\frakt$ are denoted by $\delta$, while roots for a maximal split Cartan $\frakh$ are denoted by $\alpha$.
\subsection{The case $\frake_6$}
Let $\frakg=\frake_6$ and $\frakg_0=\frake_{6,4}$ the real form of real rank $4$ in the notation of \cite{GW96}. Then $\frakk=\sl(2,\CC)\oplus \sl(6,\CC)$.
We choose the strongly orthogonal non-compact roots $\{\gamma_1,\dots,\gamma_4\}\in \Delta(\frakg,\frakt)$ given as linear combinations of simple roots $\Pi(\frakg,\frakt)=\{\delta_1,\dots,\delta_6\}$ by
$$
\gamma_1=\delta_2, \qquad \gamma_2=\begin{psmallmatrix}
& & 1& & \\
0&1&2&1&0
\end{psmallmatrix}, \qquad \gamma_3=\begin{psmallmatrix}
& & 1& & \\
1&1&2&1&1
\end{psmallmatrix}, \qquad \gamma_4=\begin{psmallmatrix}
& & 1& & \\
1&2&2&2&1
\end{psmallmatrix}.$$
The highest root is $\beta=\begin{psmallmatrix}
	& & 2& & \\
	1& 2&3&2&1
\end{psmallmatrix}$.
\begin{lemma}
	\begin{enumerate}[label=(\roman*)]\label{lemma:E6_D4}
	\item The span $\langle \gamma_1,\dots,\gamma_4 \rangle\cap \Delta(\frakg,\frakt)$ is a root system of type $D_4$ with simple roots
	$$\delta'_1=\delta_4, \qquad \delta'_2=\delta_2, \qquad\delta'_3=\begin{psmallmatrix}
		& & 0 & & \\
		0& 1&1&1&0
	\end{psmallmatrix}, \qquad \delta'_4=\begin{psmallmatrix}
		& & 0 & & \\
		1& 1&1&1&1
	\end{psmallmatrix}$$
and the same highest root $\beta$, see \eqref{eq:D4}.
\item We have $\delta_i \in \langle\gamma_1,\dots,\gamma_4, \beta \rangle_\ZZ$ if and only if $i=2,4$.
	\end{enumerate}
\end{lemma}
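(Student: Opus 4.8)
The plan is to reduce both assertions to explicit linear algebra in the $E_6$ root lattice $Q:=\langle \delta_1,\dots,\delta_6\rangle_\ZZ$, equipped with the normalized Killing form $B$ for which $B(\delta_i,\delta_i)=2$ and $B(\delta_i,\delta_j)\in\{0,-1\}$ according to the $E_6$ Dynkin diagram. The first step is to rewrite the three nontrivial candidate simple roots in the $\delta_i$-basis, namely $\delta'_1=\delta_4$, $\delta'_2=\delta_2$, $\delta'_3=\delta_3+\delta_4+\delta_5$, $\delta'_4=\delta_1+\delta_3+\delta_4+\delta_5+\delta_6$; everything then becomes a finite computation with these coefficient vectors.

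For part (i) I would proceed as follows. First check $B(\delta'_j,\delta'_j)=2$ for each $j$; since $\frakg$ is simply laced, the norm-$2$ vectors of $Q$ are exactly the roots, so each $\delta'_j\in\Delta(\frakg,\frakt)$. Next compute the Gram matrix and verify $B(\delta'_2,\delta'_j)=-1$ for $j=1,3,4$ while $B(\delta'_i,\delta'_j)=0$ for distinct $i,j\in\{1,3,4\}$, which is precisely the $D_4$ Cartan matrix with central node $\delta'_2$. To identify the spans, I would solve the integral system
\[
\gamma_1=\delta'_2,\quad \gamma_2=\delta'_1+\delta'_2+\delta'_3,\quad \gamma_3=\delta'_1+\delta'_2+\delta'_4,\quad \gamma_4=\delta'_2+\delta'_3+\delta'_4,
\]
exhibiting the $\gamma_i$ as a second basis of $V:=\langle\gamma_1,\dots,\gamma_4\rangle$, and then observe $\delta'_1+2\delta'_2+\delta'_3+\delta'_4=\delta_1+2\delta_2+2\delta_3+3\delta_4+2\delta_5+\delta_6=\beta$, which matches the highest root of $D_4$ in \eqref{eq:D4}. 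Finally, to rule out extra roots: $V\cap\Delta(\frakg,\frakt)$ is a rank-$4$ simply-laced root subsystem containing the $D_4$ just produced, and $D_4$ is the maximal reduced simply-laced root system in rank $4$ (the only irreducible alternative $A_4$ is smaller and contains no $D_4$, and no reducible system can contain an irreducible rank-$4$ one), so $V\cap\Delta(\frakg,\frakt)=D_4$.

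For part (ii) the essential point is that the $\gamma_i$ alone do \emph{not} generate the full $D_4$ lattice. From the basis relations above the transition matrix has determinant $\pm2$, so $\langle\gamma_1,\dots,\gamma_4\rangle_\ZZ$ is an index-$2$ sublattice of $Q(D_4):=\langle\delta'_1,\dots,\delta'_4\rangle_\ZZ$; adjoining $\beta$ closes the gap, since $\delta'_1=\gamma_2+\gamma_3-\beta$ and $\delta'_2=\gamma_1$ recover the remaining $\delta'_j$ integrally, giving $\langle\gamma_1,\dots,\gamma_4,\beta\rangle_\ZZ=Q(D_4)$. I would then characterize $Q(D_4)$ inside $Q$: a general element $a\delta'_1+b\delta'_2+c\delta'_3+d\delta'_4$ has $\delta_i$-coefficient vector $(d,\,b,\,c+d,\,a+c+d,\,c+d,\,d)$, so $Q(D_4)$ consists exactly of the lattice vectors whose $\delta_1$- and $\delta_6$-coefficients agree and whose $\delta_3$- and $\delta_5$-coefficients agree, i.e.\ the vectors fixed by the order-$2$ diagram automorphism $\sigma$ of $E_6$ swapping $\delta_1\leftrightarrow\delta_6$ and $\delta_3\leftrightarrow\delta_5$. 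Among the simple roots only $\delta_2$ and $\delta_4$ are $\sigma$-fixed, which yields $\delta_i\in\langle\gamma_1,\dots,\gamma_4,\beta\rangle_\ZZ$ if and only if $i\in\{2,4\}$.

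The individual computations are routine once the $\delta'_j$ are expressed in the $\delta_i$-basis, so the only genuinely delicate points are the two I have flagged. In part (i) the nontrivial content is the "no extra roots" step, i.e.\ that the $4$-dimensional slice $V$ meets $\Delta(\frakg,\frakt)$ in nothing larger than $D_4$. In part (ii) the subtlety is integral rather than linear: because $\langle\gamma_i\rangle_\ZZ$ has index $2$ in $Q(D_4)$, the root $\beta$ is truly needed, and recognizing $Q(D_4)$ as the $\sigma$-fixed sublattice is what makes the final answer $i\in\{2,4\}$ transparent. I expect this lattice-index bookkeeping to be the main thing to get right.
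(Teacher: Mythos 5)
Your proposal is correct; all the identities you rely on check out ($\gamma_2=\delta'_1+\delta'_2+\delta'_3$, $\gamma_3=\delta'_1+\delta'_2+\delta'_4$, $\gamma_4=\delta'_2+\delta'_3+\delta'_4$, $\beta=\delta'_1+2\delta'_2+\delta'_3+\delta'_4$, the Gram matrix, and the determinant $2$ of the transition matrix). The basic strategy --- explicit linear algebra relating $\{\delta'_j\}$ and $\{\gamma_i\}$ inside the root lattice --- is the same as the paper's, but the two proofs run in opposite directions and yours is substantially more complete. The paper inverts your relations: it writes $\delta'_2=\gamma_1$ and $\delta'_1,\delta'_3,\delta'_4$ as the half-integer combinations $\tfrac{1}{2}(\pm\gamma_1\pm\gamma_2\pm\gamma_3\pm\gamma_4)$, so that the $\delta'_j$ visibly reproduce the expressions of the standard $D_4$ simple roots in terms of the strongly orthogonal roots \eqref{eq:D4_SOR}, then checks the orthogonality relations and stops; part (ii) is dismissed as ``easily deduced from the definitions.'' You instead work in the $E_6$ coordinates and supply two things the paper leaves to the reader. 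First, the argument that $\langle\gamma_1,\dots,\gamma_4\rangle\cap\Delta(\frakg,\frakt)$ contains nothing beyond the $D_4$ you exhibit (maximality of $D_4$ among reduced simply-laced rank-$4$ systems); this step is genuinely needed, since the lemma asserts the intersection \emph{is} of type $D_4$, not merely that it contains such a subsystem. Second, an actual proof of (ii): the index-$2$ computation $[\langle\delta'_1,\dots,\delta'_4\rangle_\ZZ:\langle\gamma_1,\dots,\gamma_4\rangle_\ZZ]=2$, the verification that adjoining $\beta$ closes the gap, and the identification of $\langle\gamma_1,\dots,\gamma_4,\beta\rangle_\ZZ$ with the fixed lattice of the diagram involution $\delta_1\leftrightarrow\delta_6$, $\delta_3\leftrightarrow\delta_5$, which makes the answer $i\in\{2,4\}$ transparent (and is pleasingly consistent with the $\frake_{6,4}$ Satake diagram and Remark~\ref{remark:h_m_res_0}). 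Both routes prove the lemma; yours costs a bit more bookkeeping in the $E_6$ basis but is self-contained exactly at the two points where the paper relies on the reader's routine verification.
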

\begin{proof}
	Ad (i). We have $$\delta'_1=\frac{1}{2}(-\gamma_1+\gamma_2+\gamma_3-\gamma_4),\qquad
	\delta'_2=\gamma_1,$$ $$\delta'_3=\frac{1}{2}(-\gamma_1+\gamma_2-\gamma_3+\gamma_4),\qquad   \delta'_4=\frac{1}{2}(-\gamma_1-\gamma_2+\gamma_3+\gamma_4),$$
	and it is easily checked that $\delta'_i$ satisfy the right orthogonality relations.
	
	Ad (ii). This is easily deduced from the definitions of $\gamma_1,\dots, \gamma_4,\beta$.
\end{proof}
The lemma above allows us to use Lemma~\ref{lemma:root_vector_projection} and Proposition~\ref{prop:D4_projections} to deduce the following.
\begin{corollary}\label{cor:E6_projection}
	We have for simple roots $\alpha_j\in \Pi(\frakg,\frakh)$
	$$P_\beta(Z_{\alpha_j})=	\begin{cases}
		-\frac{1}{2}H_\beta & \text{if $j=2$,}\\
		\frac{1}{2}(X_\beta+X_{-\beta})& \text{if $j=4$,} \\
		0 & \text{otherwise.}
	\end{cases}$$
\end{corollary}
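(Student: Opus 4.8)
The plan is to treat the three cases by first separating the simple roots $\alpha_j$ according to Lemma~\ref{lemma:E6_D4}(ii) into those for which $\delta_j \in \langle\gamma_1,\dots,\gamma_4,\beta\rangle_\ZZ$ --- precisely $j = 2, 4$ --- and the rest. For $j \neq 2, 4$ the projection should vanish immediately: since $\delta_j \notin \langle\gamma_1,\dots,\gamma_4,\beta\rangle_\ZZ$, Lemma~\ref{lemma:root_vector_projection} applied with $\gamma = \beta$ (and $r_0 = 4$) gives $P_\beta(Z_{\alpha_j}) = 0$. This leaves only $j = 2$ and $j = 4$.

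For these two roots the idea is to reduce the computation to the $D_4$ situation already handled in Proposition~\ref{prop:D4_projections}. By Lemma~\ref{lemma:E6_D4}(i) the roots $\gamma_1,\dots,\gamma_4$ span a subsystem $\Phi := \langle\gamma_1,\dots,\gamma_4\rangle_\ZZ \cap \Delta(\frakg,\frakt)$ of type $D_4$ with the same highest root $\beta$, and $\delta_2 = \delta'_2$, $\delta_4 = \delta'_1$ are among its simple roots. I would pass to the subalgebra $\frakg' \subseteq \frakg$ generated by the root spaces $\frakg_{\pm\phi}$ ($\phi \in \Phi$) and the corresponding coroots; this is a copy of $\so(8,\CC)$ containing $\sl(2,\CC)_\beta$ because $\beta \in \Phi$. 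Each Cayley transform $c_{\gamma_i} = \Ad(\exp(\tfrac{\pi}{4}(X_{-\gamma_i} - X_{\gamma_i})))$ is exponentiated from an element of $\frakg'$, so $c$ preserves $\frakg'$ and restricts there to the $D_4$ Cayley transform of Proposition~\ref{prop:D4_projections}; likewise $\theta$, which fixes the compact Cartan $\frakt$ and acts by a sign on each $\frakg_\phi$, preserves $\frakg'$. Hence for $\delta_j \in \Phi$ the vector $Z_{\alpha_j} = c(X_{\delta_j}) + \theta(c(X_{\delta_j}))$ lies in $\frakg'$, and its projection onto $\sl(2,\CC)_\beta$ may be computed inside $\frakg'$. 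Proposition~\ref{prop:D4_projections}(1) then gives $P_\beta(Z_{\alpha_2}) = -\tfrac12 H_\beta$ (the $D_4$ index $j=2$, since $\delta_2 = \delta'_2$) and $P_\beta(Z_{\alpha_4}) = \tfrac12(X_\beta + X_{-\beta})$ (an ``otherwise'' index, since $\delta_4 = \delta'_1$), as claimed.

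The main obstacle I foresee is justifying this reduction rigorously: one must verify both that projecting onto $\sl(2,\CC)_\beta$ annihilates any contribution living outside $\frakg'$, and that the structure constants entering the $D_4$ computation are reproduced without change inside $\frake_6$. The first is the bookkeeping already present in the proof of Lemma~\ref{lemma:root_vector_projection} --- the element $c(X_{\delta_j})$ expands into root vectors $X_{\delta'}$ (together with Cartan elements when $\delta_j$ itself lies in $\langle\gamma_i\rangle_\ZZ$, as for $\delta_2 = \gamma_1$) indexed by $\delta' \in \delta_j + \langle\gamma_1,\dots,\gamma_4\rangle_\ZZ$, all of which remain in $\Phi$ once $\delta_j \in \Phi$. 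For the second, fixing a Chevalley normalization of $X_{\pm\gamma_i}, X_{\pm\beta}, X_{\pm\delta_j}$ matching the explicit $D_4$ realization of Section~\ref{sec:SO(4,4)} ensures that the root-string identities, and therefore the signs in the resulting formulas, coincide with those in Proposition~\ref{prop:D4_projections}.
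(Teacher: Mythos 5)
Your proposal is correct and takes essentially the same route as the paper, which deduces the corollary exactly by combining Lemma~\ref{lemma:root_vector_projection} (giving $P_\beta(Z_{\alpha_j})=0$ for $j\neq 2,4$ via Lemma~\ref{lemma:E6_D4}(ii)) with Proposition~\ref{prop:D4_projections} applied to the $D_4$-subsystem of Lemma~\ref{lemma:E6_D4}(i). Your extra discussion of the subalgebra $\frakg'\cong\so(8,\CC)$, the compatibility of the Cayley transforms and $\theta$ with it, and the normalization of structure constants merely makes explicit the reduction the paper leaves implicit.
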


\subsection{The case $\frake_7$}
Let $\frakg=\frake_7$ and $\frakg_0=\frake_{7,4}$ the real form of real rank $4$. Then $\frakk=\sl(2,\CC)\oplus \so(12,\CC)$.
We choose the strongly orthogonal non-compact roots $\{\gamma_1,\dots,\gamma_4\}\in \Delta(\frakg,\frakt)$
in terms of the simple roots
$\Pi(\frakg,\frakt)=\{\delta_1,\dots,\delta_7\}$ by
$$
\gamma_1=\delta_1, \qquad \gamma_2=\begin{psmallmatrix}
	& & 1& & & \\
	1&2&2&1&0 &0
\end{psmallmatrix}, \qquad \gamma_3=\begin{psmallmatrix}
	& & 1& & &\\
	1&2&2&2&2&1
\end{psmallmatrix}, \qquad \gamma_4=\begin{psmallmatrix}
	& & 2& & &\\
	1&2&4&3&2&1
\end{psmallmatrix}.$$
The highest root is $\beta=\begin{psmallmatrix}
	& & 2& & &\\
	2& 3&4&3&2&1
\end{psmallmatrix}$.
Similarly as Lemma~\ref{lemma:E6_D4} we can deduce the following.
\begin{lemma}
	\begin{enumerate}[label=(\roman*)]
		\item The span $\langle \gamma_1,\dots,\gamma_4 \rangle\cap \Delta(\frakg,\frakt)$ is a root system of type $D_4$ with simple roots
		with $$\delta'_1=\delta_3, \qquad \delta'_2=\delta_1, \qquad\delta'_3=\begin{psmallmatrix}
			& & 1& & & \\
			0& 1&2&1&0&0
		\end{psmallmatrix}, \qquad \delta'_4=\begin{psmallmatrix}
			& & 1 & & \\
			0& 1&2&2&2 & 1
		\end{psmallmatrix}$$
		and the same highest root $\beta$, see \eqref{eq:D4}.
		\item We have $\delta_i \in \langle\gamma_1,\dots,\gamma_4, \beta \rangle_\ZZ$ if and only if $i=1,3$.
	\end{enumerate}
\end{lemma}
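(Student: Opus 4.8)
The plan is to follow the template of Lemma~\ref{lemma:E6_D4} essentially verbatim. The four roots $\gamma_1,\dots,\gamma_4$ are pairwise strongly orthogonal and of a common length, so the same signed half-sums that produced the $D_4$ simple roots in the $\frake_6$ case (and in the $\SO(4,4)$ model of Section~\ref{sec:SO(4,4)}) should again cut out a $D_4$ subsystem here; the only thing that changes is the size of the coefficient vectors.

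For part (i), I would set
\[
\delta'_1=\tfrac12(-\gamma_1+\gamma_2+\gamma_3-\gamma_4),\quad
\delta'_2=\gamma_1,\quad
\delta'_3=\tfrac12(-\gamma_1+\gamma_2-\gamma_3+\gamma_4),\quad
\delta'_4=\tfrac12(-\gamma_1-\gamma_2+\gamma_3+\gamma_4),
\]
and verify, by expanding each $\gamma_i$ in the basis $\{\delta_1,\dots,\delta_7\}$, that these four vectors equal the roots named in the statement (in particular $\delta'_2=\delta_1$ and $\delta'_1=\delta_3$). Because the $\gamma_i$ satisfy $B(\gamma_i,\gamma_j)=0$ for $i\neq j$ and $B(\gamma_i,\gamma_i)$ is independent of $i$, the inner products $B(\delta'_i,\delta'_j)$ come out by pure bilinearity and reproduce exactly the $D_4$ Cartan matrix: $\delta'_1,\delta'_3,\delta'_4$ are mutually orthogonal and each is joined to the central node $\delta'_2=\gamma_1$. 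Collecting coefficients then gives $\delta'_1+2\delta'_2+\delta'_3+\delta'_4=\tfrac12(\gamma_1+\gamma_2+\gamma_3+\gamma_4)$, and a one-line check identifies the right-hand side with $\beta$; hence $\beta$ is the highest root of this $D_4$ in the labelling of \eqref{eq:D4}.

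For part (ii), the same computation already produces the required integral expressions: $\delta_1=\gamma_1$ and $\delta_3=\delta'_1=\beta-\gamma_1-\gamma_4$ both lie in $\langle\gamma_1,\dots,\gamma_4,\beta\rangle_\ZZ$. To rule out the remaining simple roots I would note that $L:=\langle\gamma_1,\dots,\gamma_4,\beta\rangle_\ZZ$ is contained in the four-dimensional subspace $\langle\gamma_1,\dots,\gamma_4\rangle_\RR$ spanned by the $D_4$ subsystem, and then compare coefficients in the basis $\{\delta_1,\dots,\delta_7\}$ to see that $\delta_i$ for $i\neq 1,3$ is not an integral (indeed not even a real) combination of the $\delta'_j$. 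This is precisely the input needed for Lemma~\ref{lemma:root_vector_projection}, which will then force $P_\beta(Z_{\alpha_j})=0$ for all $j\neq 1,3$.

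I expect the only real work to be the bookkeeping: one must expand $\gamma_1,\dots,\gamma_4$ and $\beta$ explicitly in the $\frake_7$ simple-root basis and confirm both that the signed half-sums land on honest roots matching the stated $\delta'_i$ and that no simple root other than $\delta_1,\delta_3$ enters $L$. Conceptually there is no obstacle—strong orthogonality together with equal length of the $\gamma_i$ is exactly what makes the $D_4$ relations automatic—so the argument is identical in spirit to Lemma~\ref{lemma:E6_D4}, merely with the larger coefficient vectors of $\frake_7$.
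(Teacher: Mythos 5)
Your proposal is correct and is essentially the paper's own argument: the paper proves this lemma by deferring to the template of Lemma~\ref{lemma:E6_D4}, i.e.\ exactly the signed half-sums $\delta'_1=\tfrac12(-\gamma_1+\gamma_2+\gamma_3-\gamma_4)$, $\delta'_2=\gamma_1$, $\delta'_3=\tfrac12(-\gamma_1+\gamma_2-\gamma_3+\gamma_4)$, $\delta'_4=\tfrac12(-\gamma_1-\gamma_2+\gamma_3+\gamma_4)$ together with the orthogonality relations, and your expansions in the basis $\{\delta_1,\dots,\delta_7\}$ do land on the stated roots, with $\beta=\tfrac12(\gamma_1+\gamma_2+\gamma_3+\gamma_4)$ and $\delta_3=\beta-\gamma_1-\gamma_4$ confirming part (ii).
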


Then again by Lemma~\ref{lemma:root_vector_projection} and Proposition~\ref{prop:D4_projections} we have the following.
\begin{corollary}\label{cor:E7_projection}
	We have for simple roots $\alpha_j\in \Pi(\frakg,\frakh)$
	$$P_\beta(Z_{\alpha_j})=	\begin{cases}
		-\frac{1}{2}H_\beta & \text{if $j=1$,}\\
		\frac{1}{2}(X_\beta+X_{-\beta})& \text{if $j=3$,} \\
		0 & \text{if $j=2,4,5,6,7$.}
	\end{cases}$$
\end{corollary}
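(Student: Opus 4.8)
The plan is to prove Corollary~\ref{cor:E7_projection} by exactly the same reduction strategy that produced Corollary~\ref{cor:E6_projection} for $\frake_6$. The key observation is that the four strongly orthogonal non-compact roots $\{\gamma_1,\dots,\gamma_4\}$ chosen here span (inside $\Delta(\frakg,\frakt)$) a sub-root-system of type $D_4$ carrying the same highest root $\beta$, so that all the Cayley-transform computations are governed by the explicit $\SO(4,4)$ formulas of Proposition~\ref{prop:D4_projections}. Concretely, I would first state and verify the analogue of Lemma~\ref{lemma:E6_D4}: that $\langle\gamma_1,\dots,\gamma_4\rangle\cap\Delta(\frakg,\frakt)$ is of type $D_4$ with the simple roots $\delta'_1=\delta_3$, $\delta'_2=\delta_1$, and the two displayed long combinations $\delta'_3,\delta'_4$, and that among the seven simple roots $\delta_i$ of $\frake_7$ exactly $\delta_1$ and $\delta_3$ lie in $\langle\gamma_1,\dots,\gamma_4,\beta\rangle_\ZZ$. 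This is the essential input and is a finite linear-algebra check: one writes each $\delta'_k$ as a half-integer combination of the $\gamma_j$ (mirroring the $\frake_6$ expressions $\delta'_k=\tfrac12(\pm\gamma_1\pm\gamma_2\pm\gamma_3\pm\gamma_4)$) and confirms the $D_4$ orthogonality relations, while membership in $\langle\gamma_1,\dots,\gamma_4,\beta\rangle_\ZZ$ is decided by solving the integrality of the coordinate equations.

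With this lemma in hand, the corollary follows mechanically. For every simple root $\alpha_j$ with $j\in\{2,4,5,6,7\}$, part~(ii) of the lemma gives $\delta_j\notin\langle\gamma_1,\dots,\gamma_4,\beta\rangle_\ZZ$, hence in particular $\delta_j\notin\beta+\langle\gamma_1,\dots,\gamma_{r_0}\rangle_\ZZ$, so Lemma~\ref{lemma:root_vector_projection} (applied with $\gamma=\beta$) immediately yields $P_\beta(Z_{\alpha_j})=0$. For the two remaining roots $\delta_1$ and $\delta_3$, the whole computation of $Z_{\alpha_j}=c(X_{\delta_j})+\theta(c(X_{\delta_j}))$ takes place inside the $D_4$ subalgebra, where $\delta'_2=\delta_1$ and $\delta'_1=\delta_3$ under the identification of Proposition~\ref{prop:D4_projections}. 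Thus $P_\beta(Z_{\alpha_1})$ is read off from the $j=2$ case there, giving $-\tfrac12 H_\beta$, and $P_\beta(Z_{\alpha_3})$ from the $j=1$ (equivalently $i=j$) case, giving $\tfrac12(X_\beta+X_{-\beta})$. The matching of indices ($\delta_1\leftrightarrow\delta'_2$, $\delta_3\leftrightarrow\delta'_1$) is precisely why the $-\tfrac12 H_\beta$ output attaches to $j=1$ and the $\tfrac12(X_\beta+X_{-\beta})$ output to $j=3$ in the corollary.

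The only genuine subtlety I anticipate is justifying that the $\frakg$-computation really collapses to the $D_4$-computation: one must know that the Cayley transforms $c_{\gamma_j}$ only move root vectors within $\delta_j+\langle\gamma_1,\dots,\gamma_4\rangle_\ZZ$ and that the $\sl(2,\CC)_\beta$-triple $\{H_\beta,X_\beta,X_{-\beta}\}$ coincides in $\frake_7$ and in its $D_4$ subsystem, so that the projection $P_\beta$ is insensitive to the ambient algebra. The former is exactly the content of the expansion in the proof of Lemma~\ref{lemma:root_vector_projection}; the latter holds because $\beta$ is the common highest root of both systems, so its coroot and root vectors are intrinsic to the $D_4$ span. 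Granting these, the proof is a one-line invocation: ``By the analogue of Lemma~\ref{lemma:E6_D4} together with Lemma~\ref{lemma:root_vector_projection} and Proposition~\ref{prop:D4_projections}, the claimed projections follow.'' The main obstacle is therefore not conceptual but bookkeeping, namely correctly tracking the index correspondence between the $\frake_7$ simple roots and the $D_4$ simple roots so that the two nonzero cases land on $j=1$ and $j=3$ rather than being swapped.
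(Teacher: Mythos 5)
Your proposal is correct and follows essentially the same route as the paper: the paper states the $E_7$ analogue of Lemma~\ref{lemma:E6_D4} (with $\delta'_1=\delta_3$, $\delta'_2=\delta_1$, and $\delta_i\in\langle\gamma_1,\dots,\gamma_4,\beta\rangle_\ZZ$ iff $i=1,3$) and then deduces the corollary by combining Lemma~\ref{lemma:root_vector_projection} (for the vanishing cases $j=2,4,5,6,7$) with Proposition~\ref{prop:D4_projections} (for $j=1,3$), exactly as you do. Your index bookkeeping ($\delta_1\leftrightarrow\delta'_2=\gamma_1$ giving $-\tfrac12 H_\beta$, and $\delta_3\leftrightarrow\delta'_1$ giving $\tfrac12(X_\beta+X_{-\beta})$) matches the paper's identification, and your discussion of why the computation collapses to the $D_4$ subalgebra makes explicit what the paper leaves implicit.
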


\subsection{The case $\frake_8$}
Let $\frakg=\frake_8$ and $\frakg_0=\frake_{8,4}$ the real form of real rank $4$. Then $\frakk=\sl(2,\CC)\oplus \frake_{7}$.
We choose the strongly orthogonal non-compact roots $\{\gamma_1,\dots,\gamma_4\}\in \Delta(\frakg,\frakt)$
in terms of $\Pi(\frakg,\frakt)=\{\delta_1,\dots,\delta_8\}$
by
$$
\gamma_1=\delta_8, \qquad \gamma_2=\begin{psmallmatrix}
		& & 1& & & &\\
	0&1&2&2&2&2 &1
\end{psmallmatrix}, \qquad \gamma_3=\begin{psmallmatrix}
	& & 2& & &\\
2&3&4&3&2&2 &1
\end{psmallmatrix}, \qquad \gamma_4=\begin{psmallmatrix}
	& & 3& & &\\
	2&4&6&5&4&2 &1
\end{psmallmatrix}.$$
The highest root is $\beta=\begin{psmallmatrix}
	& & 3& & & &\\
	2& 4&6&5&4&3&2
\end{psmallmatrix}$.
Similarly as Lemma~\ref{lemma:E6_D4} we can deduce the following.
\begin{lemma}
	\begin{enumerate}[label=(\roman*)]
		\item The span $\langle \gamma_1,\dots,\gamma_4 \rangle\cap \Delta(\frakg,\frakt)$ is a root system of type $D_4$ with simple roots
		with $$\delta'_1=\delta_7, \qquad \delta'_2=\delta_8, \qquad\delta'_3=\begin{psmallmatrix}
			& & 1& & & &\\
			0& 1&2&2&2&1&0
		\end{psmallmatrix}, \qquad \delta'_4=\begin{psmallmatrix}
			& & 2 & & &\\
			2& 3&4&3&2 & 1&0
		\end{psmallmatrix}$$
		and the same highest root $\beta$, see \eqref{eq:D4}.
		\item We have $\delta_i \in \langle\gamma_1,\dots,\gamma_4, \beta \rangle_\ZZ$ if and only if $i=7,8$.
	\end{enumerate}
\end{lemma}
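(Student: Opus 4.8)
The plan is to mirror the proof of Lemma~\ref{lemma:E6_D4}, since the way four mutually strongly orthogonal roots $\gamma_1,\dots,\gamma_4$ sit inside the ambient $D_4$ subsystem is governed purely by the abstract $D_4$ combinatorics, and is therefore the same in every exceptional case. First I would rewrite the four candidate simple roots as half-integer combinations of the $\gamma_j$: using the stated coordinates of $\gamma_1,\dots,\gamma_4$ and of $\delta_7,\delta_8,\delta'_3,\delta'_4$ in the basis $\Pi(\frakg,\frakt)$, a direct check gives
$$\delta'_2=\gamma_1,\quad \delta'_1=\tfrac12(-\gamma_1+\gamma_2+\gamma_3-\gamma_4),\quad \delta'_3=\tfrac12(-\gamma_1+\gamma_2-\gamma_3+\gamma_4),\quad \delta'_4=\tfrac12(-\gamma_1-\gamma_2+\gamma_3+\gamma_4),$$
exactly the combinations appearing in the $\frake_6$ case.

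For part~(i) I would then verify the $D_4$ relations from these expressions. As $\frakg$ is simply laced, the strongly orthogonal roots $\gamma_j$ are pairwise orthogonal of common squared length $2$, so evaluating the Killing form $B$ on the combinations above yields $B(\delta'_2,(\delta'_i)^\vee)=-1$ for $i=1,3,4$ and $B(\delta'_i,(\delta'_j)^\vee)=0$ for distinct $i,j\in\{1,3,4\}$. This is precisely the $D_4$ Cartan matrix with $\delta'_2$ as the trivalent node. Finally I would compute $\delta'_1+2\delta'_2+\delta'_3+\delta'_4=\tfrac12(\gamma_1+\gamma_2+\gamma_3+\gamma_4)=\beta$, which identifies $\beta$ with the highest root of this $D_4$ subsystem and produces the affine diagram \eqref{eq:D4}.

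For part~(ii) I would exploit the identity $\beta=\tfrac12\sum_j\gamma_j$ just obtained: since $\gamma_1,\dots,\gamma_4$ and $\beta$ all lie in the four-dimensional subspace $V$ spanned by $\gamma_1,\dots,\gamma_4$, we have $\langle\gamma_1,\dots,\gamma_4,\beta\rangle_\ZZ\subseteq V$. For $i\le 6$ the explicit coordinates show $\delta_i\notin V$, so these simple roots cannot lie in the lattice; for $i=7,8$ I would exhibit the integral expressions $\delta_8=\gamma_1$ and $\delta_7=\beta-\gamma_1-\gamma_4$, both of which lie in $\langle\gamma_1,\dots,\gamma_4,\beta\rangle_\ZZ$, giving the claim.

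The only genuine work, and the main source of error, is the opening bookkeeping step: confirming that the stated $\delta'_i$ and $\beta$, written in $\frake_8$ simple-root coordinates, really do equal the clean half-integer combinations of the $\gamma_j$. This is a finite but lengthy rank-$8$ verification; once it is in place, the $D_4$ orthogonality relations, the identification of $\beta$, and the lattice statement in part~(ii) all follow formally, exactly as in the $\frake_6$ case.
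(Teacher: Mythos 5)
Your proposal is correct and follows essentially the same route as the paper, which proves this $\frake_8$ lemma by simply noting it goes ``similarly as Lemma~\ref{lemma:E6_D4}'': your half-integer expressions $\delta'_2=\gamma_1$, $\delta'_1=\tfrac12(-\gamma_1+\gamma_2+\gamma_3-\gamma_4)$, $\delta'_3=\tfrac12(-\gamma_1+\gamma_2-\gamma_3+\gamma_4)$, $\delta'_4=\tfrac12(-\gamma_1-\gamma_2+\gamma_3+\gamma_4)$ are exactly the combinations used there, and they check out against the stated $\frake_8$ coordinates (e.g. $\beta-\gamma_4=\delta_7+\delta_8$, so $\delta_7=\beta-\gamma_1-\gamma_4$ as you claim). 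The orthogonality computation, the identification $\delta'_1+2\delta'_2+\delta'_3+\delta'_4=\tfrac12\sum_j\gamma_j=\beta$, and the span argument for part (ii) are all sound.
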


By Lemma~\ref{lemma:root_vector_projection} and Proposition~\ref{prop:D4_projections} we have the following.
\begin{corollary}\label{cor:E8_projection}
	We have for simple roots $\alpha_j\in \Pi(\frakg,\frakh)$
	$$P_\beta(Z_{\alpha_j})=	\begin{cases}
		-\frac{1}{2}H_\beta & \text{if $j=8$,}\\
		\frac{1}{2}(X_\beta+X_{-\beta})& \text{if $j=7$,} \\
		0 & \text{if $j=1,2,3,4,5,6$.}
	\end{cases}$$
\end{corollary}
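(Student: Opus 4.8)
The plan is to follow verbatim the arguments already used for $\frake_6$ (Corollary~\ref{cor:E6_projection}) and $\frake_7$ (Corollary~\ref{cor:E7_projection}), reducing the computation of $P_\beta(Z_{\alpha_j})$ in $\frake_8$ to the explicit $D_4$ computation of Proposition~\ref{prop:D4_projections}. The conceptual point is that both the Cayley transform $c=c_{\gamma_1}\circ\cdots\circ c_{\gamma_4}$ and the target $\sl(2,\CC)_\beta$ of the projection $P_\beta$ are entirely determined by the $D_4$ subsystem $\langle\gamma_1,\dots,\gamma_4\rangle\cap\Delta(\frakg,\frakt)$ identified in the preceding lemma, inside which $\beta$ is again the highest root.

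First I would dispose of the indices $j\in\{1,2,3,4,5,6\}$. By part (ii) of the preceding lemma, $\delta_j\in\langle\gamma_1,\dots,\gamma_4,\beta\rangle_\ZZ$ precisely when $j\in\{7,8\}$; hence for the remaining indices $\delta_j\notin\langle\gamma_1,\dots,\gamma_4,\beta\rangle_\ZZ$, and Lemma~\ref{lemma:root_vector_projection} (applied with $\gamma=\beta$) immediately yields $P_\beta(Z_{\alpha_j})=0$. This settles the third branch of the claimed formula.

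For $j\in\{7,8\}$ I would invoke part (i) of the preceding lemma, which exhibits $\{\gamma_1,\dots,\gamma_4\}$ as a $D_4$ subsystem with simple roots $\delta'_1=\delta_7$, $\delta'_2=\delta_8$, $\delta'_3$, $\delta'_4$ and highest root $\beta$. As in Lemma~\ref{lemma:root_vector_projection}, $c(X_{\delta_7})$ and $c(X_{\delta_8})$ expand into root vectors $X_{\delta'}$ with $\delta'$ lying in $\delta_7+\langle\gamma_1,\dots,\gamma_4\rangle_\ZZ$ resp.\ $\delta_8+\langle\gamma_1,\dots,\gamma_4\rangle_\ZZ$, which by part (i) are roots of the $D_4$ subalgebra; hence $Z_{\alpha_7},Z_{\alpha_8}$ and their $P_\beta$-projections may be computed inside $D_4$. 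Proposition~\ref{prop:D4_projections}(1) then applies under the node identification $\delta'_2=\delta_8$, the central (trivalent) node of \eqref{eq:D4}, giving $P_\beta(Z_{\alpha_8})=-\tfrac12 H_\beta$, and $\delta'_1=\delta_7$, an outer node, giving $P_\beta(Z_{\alpha_7})=\tfrac12(X_\beta+X_{-\beta})$. This recovers the first two branches.

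The only point needing genuine care — and the step I would treat as the main obstacle — is justifying that the projection $P_\beta$ taken inside the $D_4$ subalgebra coincides with $P_\beta$ taken inside all of $\frake_8$. This is where the precise choice of $\gamma_1,\dots,\gamma_4$ matters: because $c$ is assembled solely from the $c_{\gamma_i}$, the vectors $Z_{\alpha_7},Z_{\alpha_8}$ land in the $D_4$ subalgebra, and since $\sl(2,\CC)_\beta$ is the same whether regarded inside $D_4$ or inside $\frake_8$ — both use the triple $\{X_\beta,X_{-\beta},H_\beta\}$ attached to the common highest root $\beta$ — the parallel projections agree on these vectors. Once this reduction is in place, the verification of parts (i) and (ii) of the preceding lemma directly from the displayed weight coordinates of $\gamma_1,\dots,\gamma_4,\beta$ is the only remaining routine bookkeeping.
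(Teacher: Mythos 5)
Your proposal is correct and follows essentially the same route as the paper: Lemma~\ref{lemma:root_vector_projection} combined with part~(ii) of the preceding lemma kills $P_\beta(Z_{\alpha_j})$ for $j=1,\dots,6$, while part~(i) identifies the $D_4$ subsystem (with $\delta'_2=\delta_8$ the trivalent node and $\delta'_1=\delta_7$ an outer node) so that Proposition~\ref{prop:D4_projections}(1) gives the $j=7,8$ cases. Your explicit justification that the projection computed inside the $D_4$ subalgebra agrees with $P_\beta$ on all of $\frakk$ is a detail the paper leaves implicit, but it is the same argument, not a different one.
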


\subsection{The case $\frakf_4$}
Let $\frakg=\frakf_4$ and $\frakg_0=\frakf_{4,4}$ the real form of real rank $4$. Then $\frakk=\sl(2,\CC)\oplus \sp(3,\CC)$.
The strongly orthogonal non-compact roots $\{\gamma_1,\dots,\gamma_4\}\in \Delta(\frakg,\frakt)$ are chosen in terms
of $\Pi(\frakg,\frakt)=\{\delta_1,\dots,\delta_4\}$ as
$$
\gamma_1=\delta_1, \qquad \gamma_2=\begin{psmallmatrix}
1&2&2&0
\end{psmallmatrix}, \qquad \gamma_3=\begin{psmallmatrix}
1&2&2&2
\end{psmallmatrix}, \qquad \gamma_4=\begin{psmallmatrix}
1&2&4&2
\end{psmallmatrix}.$$
The highest root is $\beta=\begin{psmallmatrix}
2&3&4&2
\end{psmallmatrix}$.
Similarly as Lemma~\ref{lemma:E6_D4} we can deduce the following.
\begin{lemma}
	\begin{enumerate}[label=(\roman*)]
		\item The span $\langle \gamma_1,\dots,\gamma_4 \rangle\cap \Delta(\frakg,\frakt)$ is a root system of type $D_4$ with simple roots
		$$\delta'_1=\delta_2, \qquad \delta'_2=\delta_1, \qquad\delta'_3=\begin{psmallmatrix}
0&1&2&2
		\end{psmallmatrix}, \qquad \delta'_4=\begin{psmallmatrix}
0&1&2&0
		\end{psmallmatrix}$$
		and the same highest root $\beta$, see \eqref{eq:D4}.
		\item We have $\delta_i \in \langle\gamma_1,\dots,\gamma_4, \beta \rangle_\ZZ$ if and only if $i=1,2$.
	\end{enumerate}
\end{lemma}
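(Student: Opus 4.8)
The plan is to follow the proof of Lemma~\ref{lemma:E6_D4} almost verbatim, exhibiting the claimed simple roots $\delta'_1,\dots,\delta'_4$ as explicit half-integer combinations of the strongly orthogonal roots $\gamma_1,\dots,\gamma_4$ and checking the Cartan integers directly. The one genuinely new feature, which I expect to be the main point requiring care, is that $\frakg=\frakf_4$ is \emph{not} simply laced and has real rank equal to its rank, so that $\gamma_1,\dots,\gamma_4$ already span all of $\frakt$. Hence the $D_4$ in (i) is not cut out as a proper hyperplane section of $\Delta(\frakg,\frakt)$ as in the $\frake_n$ cases, but is instead the subsystem of \emph{long} roots of $\frakf_4$, and tracking the long/short dichotomy is the crux of the argument.

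First I would pass to the standard realization of $\Delta(\frakf_4,\frakt)$ in $\RR^4$, in which the given data become $\gamma_1=\varepsilon_2-\varepsilon_3$, $\gamma_2=\varepsilon_2+\varepsilon_3$, $\gamma_3=\varepsilon_1-\varepsilon_4$, $\gamma_4=\varepsilon_1+\varepsilon_4$ and $\beta=\varepsilon_1+\varepsilon_2$, so that all five are long roots; this simultaneously matches the $\varepsilon$-coordinates used for $\so(8,\CC)$ in Section~\ref{sec:SO(4,4)}. For (i) I would then write
$$\delta'_2=\gamma_1,\quad \delta'_1=\tfrac12(-\gamma_1+\gamma_2+\gamma_3-\gamma_4),\quad \delta'_4=\tfrac12(-\gamma_1+\gamma_2-\gamma_3+\gamma_4),\quad \delta'_3=\tfrac12(-\gamma_1-\gamma_2+\gamma_3+\gamma_4),$$
and check that after substituting the definitions of the $\gamma_j$ these equal the prescribed values $\delta'_1=\delta_2$, $\delta'_2=\delta_1$, $\delta'_3=(0,1,2,2)$, $\delta'_4=(0,1,2,0)$. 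Since the $\gamma_j$ are mutually orthogonal of equal (long) length, a one-line Gram computation shows that the Cartan integers $\langle\delta'_i,(\delta'_j)^\vee\rangle$ form the $D_4$ Cartan matrix with $\delta'_2$ as central node, and that $\beta=\tfrac12(\gamma_1+\gamma_2+\gamma_3+\gamma_4)=\delta'_1+2\delta'_2+\delta'_3+\delta'_4$ is its highest root, in agreement with \eqref{eq:D4}.

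To justify the identification of $\langle\gamma_1,\dots,\gamma_4\rangle\cap\Delta(\frakg,\frakt)$ with this $D_4$ I would observe that $\langle\delta'_1,\dots,\delta'_4\rangle_\ZZ$ is exactly the long-root lattice $Q_L$ of $\frakf_4$, whose intersection with $\Delta(\frakg,\frakt)$ is precisely the set of long roots and hence a root system of type $D_4$. For (ii) the same viewpoint is decisive: every element of $\langle\gamma_1,\dots,\gamma_4,\beta\rangle_\ZZ$ is an integral combination of long roots and so lies in $Q_L$, and in fact one checks $\langle\gamma_1,\dots,\gamma_4,\beta\rangle_\ZZ=Q_L$. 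Therefore $\delta_i\in\langle\gamma_1,\dots,\gamma_4,\beta\rangle_\ZZ$ holds if and only if $\delta_i$ is long, i.e. for $i=1,2$; concretely $\delta_1=\gamma_1$ and $\delta_2=\gamma_2+\gamma_3-\beta$, whereas the short simple roots $\delta_3,\delta_4$ are excluded. This is exactly the input Lemma~\ref{lemma:root_vector_projection} needs (with $\gamma=\beta$): it forces $P_\beta(Z_{\alpha_j})=0$ for $j=3,4$, so that together with (i) and Proposition~\ref{prop:D4_projections} one obtains the projection formula for $P_\beta(Z_{\alpha_j})$ exactly as in Corollaries~\ref{cor:E6_projection}--\ref{cor:E8_projection}.

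The main obstacle is thus conceptual rather than computational: one must recognize that the literal reading of $\langle\gamma_1,\dots,\gamma_4\rangle\cap\Delta(\frakg,\frakt)$ degenerates for $\frakf_4$, where the $\gamma_j$ span all of $\frakt$, and replace it by the long-root subsystem $Q_L\cap\Delta(\frakg,\frakt)$. Once the long/short bookkeeping is in place, the orthogonality relations in (i) and the lattice computation in (ii) are routine and reproduce the $\frake_6$ argument step for step.
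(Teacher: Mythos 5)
Your proposal is correct, and its computational core is exactly what the paper intends: the paper's entire proof of this lemma is the phrase ``similarly as Lemma~\ref{lemma:E6_D4}'', i.e.\ exhibit the $\delta'_i$ as half-integer combinations of the $\gamma_j$ and check the orthogonality relations, which is precisely your second paragraph. Your formulas check out: in $\varepsilon$-coordinates $\gamma_1=\varepsilon_2-\varepsilon_3$, $\gamma_2=\varepsilon_2+\varepsilon_3$, $\gamma_3=\varepsilon_1-\varepsilon_4$, $\gamma_4=\varepsilon_1+\varepsilon_4$, $\beta=\varepsilon_1+\varepsilon_2$, so $\tfrac12(-\gamma_1+\gamma_2+\gamma_3-\gamma_4)=\varepsilon_3-\varepsilon_4=\delta_2$, $\tfrac12(-\gamma_1-\gamma_2+\gamma_3+\gamma_4)=\varepsilon_1-\varepsilon_2$ which is $(0,1,2,2)$, $\tfrac12(-\gamma_1+\gamma_2-\gamma_3+\gamma_4)=\varepsilon_3+\varepsilon_4$ which is $(0,1,2,0)$, and for (ii) indeed $\delta_1=\gamma_1$ and $\delta_2=\gamma_2+\gamma_3-\beta$, while $\delta_3=\varepsilon_4$ and $\delta_4=\tfrac12(\varepsilon_1-\varepsilon_2-\varepsilon_3-\varepsilon_4)$ cannot lie in the integer span.

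Where you genuinely depart from the paper is in the reading of part (i), and your point is a correct repair rather than a stylistic variation. Since $\frakf_{4,4}$ is split, the four mutually orthogonal $\gamma_j$ span all of $\frakt^*$, so the literal set $\langle\gamma_1,\dots,\gamma_4\rangle\cap\Delta(\frakg,\frakt)$ is the full $\frakf_4$ root system (48 roots), while the $\ZZ$-span meets $\Delta(\frakg,\frakt)$ only in $\{\pm\gamma_j\}$, of type $4A_1$; neither is a $D_4$. The paper transplants the $\frake_6$ formulation, where the span is a proper subspace and the statement makes literal sense, without acknowledging that it degenerates here. Your substitute --- the long-root subsystem, which coincides with $\langle\gamma_1,\dots,\gamma_4,\beta\rangle_\ZZ\cap\Delta(\frakg,\frakt)$ and is a $D_4$ with the stated simple roots and highest root $\beta$ --- is exactly what the downstream arguments need: part (ii) feeds Lemma~\ref{lemma:root_vector_projection} to force $P_\beta(Z_{\alpha_j})=0$ for $j=3,4$, and the existence of a $D_4$ subsystem containing all $\gamma_j$ and $\beta$ with $\delta_1=\delta'_2$, $\delta_2=\delta'_1$ is what justifies the reduction to Proposition~\ref{prop:D4_projections} for $j=1,2$. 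In short: same method as the paper, plus a worthwhile correction of the statement's formulation in this degenerate rank-equals-real-rank case.
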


Lemma~\ref{lemma:root_vector_projection} and
Proposition~\ref{prop:D4_projections} imply then the following.
\begin{corollary}\label{cor:F4_projection}
We have for simple roots $\alpha_j\in \Pi(\frakg,\frakh)$
	$$P_\beta(Z_{\alpha_j})=	\begin{cases}
		-\frac{1}{2}H_\beta & \text{if $j=1$,}\\
		\frac{1}{2}(X_\beta+X_{-\beta})& \text{if $j=2$,} \\
		0 & \text{if $j=3,4$.}
	\end{cases}\qquad $$
\end{corollary}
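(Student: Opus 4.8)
The plan is to reduce the whole computation to the $D_4$ data already recorded in Proposition~\ref{prop:D4_projections}, using the embedding of the $D_4$ subsystem $\langle \gamma_1,\dots,\gamma_4\rangle \cap \Delta(\frakg,\frakt)$ supplied by the preceding lemma, and to dispose of the remaining simple roots by the vanishing criterion of Lemma~\ref{lemma:root_vector_projection}. First I would split $\Pi(\frakg,\frakt)=\{\delta_1,\dots,\delta_4\}$ according to part (ii) of the preceding lemma, which states that $\delta_j\in\langle \gamma_1,\dots,\gamma_4,\beta\rangle_\ZZ$ exactly for $j=1,2$.

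For $j=3,4$ the root $\delta_j$ does not lie in $\langle \gamma_1,\dots,\gamma_4,\beta\rangle_\ZZ$, so Lemma~\ref{lemma:root_vector_projection}, applied with $\gamma=\beta$ and $r_0=4$, gives $P_\beta(Z_{\alpha_j})=0$ at once; this is the last line of the asserted formula.

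For $j=1,2$ I would argue that the computation of $P_\beta(Z_{\alpha_j})$ takes place entirely inside the subalgebra $\frakg'\cong\so(8,\CC)$ generated by the root spaces $\frakg_{\pm\gamma}$ with $\gamma$ in the $D_4$ subsystem. Indeed the Cayley transform $c=c_{\gamma_1}\circ\cdots\circ c_{\gamma_4}$ is built only from the $\gamma_i\in\frakg'$, hence preserves $\frakg'$ and restricts there to the $D_4$ Cayley transform; moreover $\delta_1=\delta'_2$ and $\delta_2=\delta'_1$ are simple roots of the subsystem, so $X_{\delta_1},X_{\delta_2}\in\frakg'$, and since $\beta$ is also the highest root of this $D_4$ (part (i)), the projection $P_\beta$ and the triple $\{X_\beta,X_{-\beta},H_\beta\}$ are the corresponding $D_4$ objects. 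Under the identification $\delta'_i\leftrightarrow\delta_i$ with the standard realization of Proposition~\ref{prop:D4_projections}, the central node is $\delta'_2=\delta_1$ and the outer nodes are $\delta'_1=\delta_2,\delta'_3,\delta'_4$. Proposition~\ref{prop:D4_projections}(1) then yields $P_\beta(Z_{\alpha_1})=-\frac{1}{2} H_\beta$ (central node, the case $j=2$ in that proposition) and $P_\beta(Z_{\alpha_2})=\frac{1}{2}(X_\beta+X_{-\beta})$ (outer node), which are the first two lines.

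The main obstacle is bookkeeping rather than calculation: I must make sure that the permuted labeling $\delta'_1=\delta_2$, $\delta'_2=\delta_1$ genuinely sends $\delta_1$ to the central node of the standard $D_4$, so that it is $j=1$ and not $j=2$ that produces the $H_\beta$ term, and I must confirm that the strongly orthogonal roots $\gamma_1,\dots,\gamma_4$ chosen in $\frakf_4$ correspond to the roots in \eqref{eq:D4_SOR} under this identification, so that Proposition~\ref{prop:D4_projections} applies verbatim. This matching is exactly the orthogonality verification carried out in part (i) of the preceding lemma, which I would take as established.
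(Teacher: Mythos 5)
Your proposal is correct and follows essentially the same route as the paper: the paper's (implicit) proof is precisely the combination of Lemma~\ref{lemma:root_vector_projection} (giving the vanishing for $j=3,4$ via part (ii) of the preceding lemma) with the reduction to the embedded $D_4$ subsystem and Proposition~\ref{prop:D4_projections} (giving the $j=1,2$ cases, with $\delta_1=\delta'_2$ the central node and $\delta_2=\delta'_1$ an outer node). Your bookkeeping of the node identification, and your observation that the commuting of the Cayley transforms makes the relabeling of the strongly orthogonal roots harmless, correctly fills in the details the paper leaves unstated.
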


\subsection{The case $\frakg_2$}
Let $\frakg=\frakg_2$ and $\frakg_0=\frakg_{2,2}$ the real form of
real rank $2$. Let $\{\delta_1, \delta_2\}$ be the two simple
roots with $\delta_1$ the longer root.
We choose the strongly orthogonal non-compact roots $\{\gamma_1,\gamma_2\}\in \Delta(\frakg,\frakt)$ 
as $$
\gamma_1=\delta_1, \qquad \gamma_2=\delta_1+2\delta_2.$$
The highest root is $\beta=2\delta_1+3\delta_2$
and  $\frakk=\sl(2,\CC)_\beta\oplus \sl(2,\CC)$ with the first
summand $\sl(2,\CC)_\beta$ being determined by $\beta$.
Even though $\frakg_0$ is only of real rank $2$, we can still make use of 
Lemma~\ref{lemma:E6_D4} in the following way.
Consider $\so(8,\CC)$ with root system $D_4$ and simple roots $\delta'_1,\dots, \delta'_4$ and highest root $\beta'=\delta'_1+2\delta'_2+\delta'_3+\delta'_4$, see \eqref{eq:D4}.
Let $\partial:\so(8,\CC)\to \so(8,\CC)$ be the triality, acting on the rootsystem $D_4$ by circular the permutation $$\delta'_1\to\delta'_3\to\delta'_4\to \delta'_1.$$
Then we obtain an isomorphism $$T:\frakg_2\to\so(8,\CC)^\partial,$$
via
$$\delta_1\mapsto \delta'_2, \qquad \delta_2\mapsto \frac{1}{3}(\delta'_1+\delta'_3+\delta'_4),$$
and
$$X_{\delta_1}\mapsto X_{\delta'_2},\qquad X_{\delta_2}\mapsto X_{\delta'_1}+X_{\delta'_3}+X_{\delta'_4}.$$
In particular this maps $\beta$ to $\beta'$ and is compatible with our choices of strongly orthogonal roots as given in
\eqref{eq:D4_SOR}.
$$\gamma_1\mapsto \delta'_2=\varepsi_2-\varepsi_3, \qquad \gamma_2 \mapsto \frac{1}{3}((\varepsi_2+\varepsi_3)+(\varepsi_1-\varepsi_4)+(\varepsi_1+\varepsi_4)).$$
Moreover since $\so(4,4)$ is the split real form of $\so(8,\CC)$, we have that the restricted root system is also of type $D_4$, such that similarly $\so(4,4)^\partial=\frakg_{2,2}$. We refer the reader to \cite{Bae02} for more details on the construction of $\frakg_2$ via the triality of $D_4$.
\begin{lemma}\label{lemma:G2_projection}
	We have
	$$P_\beta(Z_{\alpha_j})=	\begin{cases}
		-\frac{1}{2}H_\beta & \text{if $j=1$,}\\
		\frac{3}{2}(X_\beta+X_{-\beta})& \text{if $j=2$,}
	\end{cases}\qquad
P_{\delta_2}(Z_{\alpha_j})=	\begin{cases}
	\frac{1}{2}H_{\delta_2} & \text{if $j=1$,}\\
	-\frac{1}{2}(X_{\delta_2}+X_{-\delta_2}) & \text{if $j=2$.}
\end{cases}$$
\end{lemma}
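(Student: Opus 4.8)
The plan is to transport the whole computation to the $D_4$-case of Proposition~\ref{prop:D4_projections} by means of the triality isomorphism $T\colon\frakg_2\to\so(8,\CC)^\partial$ fixed above. Under $T$ the long root $\beta$ goes to the single $D_4$-root $\beta'$, while the short root $\delta_2$ goes to the diagonal of the triality-orbit $\{\delta'_1,\delta'_3,\delta'_4\}$; correspondingly $Z_{\alpha_1}$ will be matched with a single $\so(8,\CC)$-element and $Z_{\alpha_2}$ with a sum of three, and all four constants $-\tfrac{1}{2},\tfrac{3}{2},\tfrac{1}{2},-\tfrac{1}{2}$ will drop out of the $D_4$-formulas.

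First I would check that $T$ intertwines the Cayley transforms. The first strongly orthogonal root $\gamma_1=\delta_1$ maps to $\delta'_2=\varepsi_2-\varepsi_3$, so $c_{\gamma_1}$ corresponds to the $D_4$-transform attached to $\varepsi_2-\varepsi_3$. For the short root $\gamma_2$ the normalized root vectors map to the $\partial$-invariant sums $X_{\pm(\varepsi_2+\varepsi_3)}+X_{\pm(\varepsi_1-\varepsi_4)}+X_{\pm(\varepsi_1+\varepsi_4)}$; these three $D_4$-roots are mutually strongly orthogonal, so their $\sl(2,\CC)$-triples commute, the exponential splits, and $c_{\gamma_2}$ corresponds to the product of the three associated $D_4$-transforms. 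Hence $T\circ c=c^{D_4}\circ T$, with $c^{D_4}$ the full Cayley transform of Section~\ref{sec:SO(4,4)} built from \eqref{eq:D4_SOR}; and since $\so(4,4)^\partial=\frakg_{2,2}$ the Cartan involutions intertwine too, $T\circ\theta=\theta\circ T$. Writing $Z^{D_4}_{\alpha_i}$ for the $\so(8,\CC)$-element attached to $\delta'_i$, and using $X_{\delta_1}\mapsto X_{\delta'_2}$ and $X_{\delta_2}\mapsto X_{\delta'_1}+X_{\delta'_3}+X_{\delta'_4}$, this yields $T(Z_{\alpha_1})=Z^{D_4}_{\alpha_2}$ and $T(Z_{\alpha_2})=Z^{D_4}_{\alpha_1}+Z^{D_4}_{\alpha_3}+Z^{D_4}_{\alpha_4}$.

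Next I would substitute these into Proposition~\ref{prop:D4_projections} and pull back by $T^{-1}$, using that $T$ identifies $\sl(2,\CC)_\beta$ with $\sl(2,\CC)_{\beta'}$ and $\sl(2,\CC)_{\delta_2}$ with the diagonal of $\sl(2,\CC)_{\delta'_1},\sl(2,\CC)_{\delta'_3},\sl(2,\CC)_{\delta'_4}$. Here $T(H_\beta)=H_{\beta'}$ (verified from $H_\beta=2H_{\delta_1}+H_{\delta_2}$ against $H_{\beta'}=H_{\delta'_1}+2H_{\delta'_2}+H_{\delta'_3}+H_{\delta'_4}$) and $T(H_{\delta_2})=H_{\delta'_1}+H_{\delta'_3}+H_{\delta'_4}$. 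For $P_\beta$: when $j=1$, part~(1) of the Proposition gives $-\tfrac{1}{2}H_{\beta'}$, hence $-\tfrac{1}{2}H_\beta$; when $j=2$, each of the three summands contributes $\tfrac{1}{2}(X_{\beta'}+X_{-\beta'})$, totalling $\tfrac{3}{2}(X_\beta+X_{-\beta})$. For $P_{\delta_2}$ one projects onto the diagonal $\sl(2,\CC)$: when $j=1$ the three equal pieces $\tfrac{1}{2}H_{\delta'_i}$ sum to $\tfrac{1}{2}T(H_{\delta_2})$, giving $\tfrac{1}{2}H_{\delta_2}$; when $j=2$, part~(2) gives $-\tfrac{1}{2}(X_{\delta'_i}+X_{-\delta'_i})$ on the $i$-th factor and $0$ on the other two, so the total is $-\tfrac{1}{2}T(X_{\delta_2}+X_{-\delta_2})$, i.e.\ $-\tfrac{1}{2}(X_{\delta_2}+X_{-\delta_2})$. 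The difference in the constants is structural: for the long root the three contributions genuinely add to $\tfrac{3}{2}$, whereas for the short root the three strongly orthogonal pieces are absorbed into the single diagonal generator, leaving the factor $\tfrac{1}{2}$.

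The only real obstacle is the normalization under $T$, which is precisely what fixes the numerical constants. One must verify that the normalized short-root vectors $X_{\gamma_2}$ and $X_{\delta_2}$ go to the coefficient-one sums of the corresponding $D_4$-root vectors; this is forced by comparing $[T(X_{\gamma_2}),T(X_{-\gamma_2})]$ with $T(H_{\gamma_2})$, the cross terms vanishing by strong orthogonality, and it is exactly what makes the Cayley transforms factor. One must likewise check that $T$ carries $X_{\pm\beta}$ to $X_{\pm\beta'}$ with constant exactly $1$: respecting the $\sl(2,\CC)$-triple together with $T(H_\beta)=H_{\beta'}$ forces $T(X_{\pm\beta})=c^{\pm1}X_{\pm\beta'}$, intertwining with the Chevalley involution forces $c=\pm1$, and the explicit construction gives $c=1$; any other value would reverse the symmetric combination $X_\beta+X_{-\beta}$, and with it the sign of the $\tfrac{3}{2}$-term. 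With these normalizations in hand the four identities follow immediately from Proposition~\ref{prop:D4_projections}.
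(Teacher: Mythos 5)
Your proof is correct and follows essentially the same route as the paper: transport the elements $Z_{\alpha_j}$ through the triality isomorphism $T$ (giving $T(Z_{\alpha_1})=Z^{D_4}_{\alpha_2}$, $T(Z_{\alpha_2})=Z^{D_4}_{\alpha_1}+Z^{D_4}_{\alpha_3}+Z^{D_4}_{\alpha_4}$) and invoke Proposition~\ref{prop:D4_projections}. You make explicit the intertwining of the Cayley transforms and the root-vector normalizations that the paper compresses into ``by the discussion above it is clear,'' and your treatment of the short-root projection (summing the three $D_4$-projections into the diagonal copy of $\sl(2,\CC)$, rather than the paper's $\tfrac{1}{3}$-averaged formulation) is arguably the cleaner bookkeeping, but the substance is identical.
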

\begin{proof}
	By the discussion above it is clear that we have  $$P_\beta(Z_{\alpha_1})=T^{-1}(P_{\beta'}(Z_{\delta_2}))$$
	and 
	$$P_\beta(Z_{\alpha_2})=T^{-1}(P_{\beta'}(Z_{\alpha_1}+Z_{\alpha_3}+Z_{\alpha_4})),$$
	which implies the first statement by Proposition~\ref{prop:D4_projections}(1).
	Similarly we have
	$$P_{\delta_2}(Z_{\alpha_1})=T^{-1}\left(\frac{1}{3}\sum_{j=1,3,4}P_{\delta_j}(Z_{\alpha_2})\right),$$
	and  
	$$P_{\delta_2}(Z_{\alpha_2})=T^{-1}\left(\frac{1}{3}\sum_{j=1,3,4}P_{\delta_j}(Z_{\alpha_1}+Z_{\alpha_3}+Z_{\alpha_4})\right),$$
	which implies the second statement by Proposition~\ref{prop:D4_projections}(2).
\end{proof}
\section{Proof of the main theorems}\label{sec:proofs}
\subsection{The case $\frakg \neq \frakg_2$}
Let $\frakg\in \{\frake_6,\frake_7,\frake_8,\frakf_4\}$.
We can combine the results of the corollaries~\ref{cor:E6_projection}, \ref{cor:E7_projection}, \ref{cor:E8_projection}, \ref{cor:F4_projection} in the following way. 
\begin{lemma}
	Let $\alpha_1$ be the unique simple root connected to $-\beta$ in the affine Dynkin diagram of $\Delta(\frakg,\frakh)$ and $\alpha_2$ the unique simple root connected to $\alpha_1$ (see Table~\ref{tab:satake}).
	Then $\alpha_1,\alpha_2$ are real roots and we have
		$$P_\beta(Z_{\alpha_j})=	\begin{cases}
		-\frac{1}{2}H_\beta & \text{if $j=1$,}\\
		\frac{1}{2}(X_\beta+X_{-\beta})& \text{if $j=2$,}
	\end{cases}$$
and $P_\beta(Z_\alpha)=0$ for every other simple root $\alpha \in \Pi(\frakg,\frakh)$.
\end{lemma}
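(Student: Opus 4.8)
The plan is to treat this as a bookkeeping unification of the four projection computations already in hand, namely Corollaries~\ref{cor:E6_projection}, \ref{cor:E7_projection}, \ref{cor:E8_projection} and \ref{cor:F4_projection}; the only genuine content is reconciling the combinatorial labelling used in the statement with the index labelling $\alpha_i=c(\delta_i)$ used in those corollaries. First I would record the compatibility that makes this legitimate. Since the highest root $\beta$ determines the ideal $\sl(2,\CC)_\beta\subseteq\frakk$, it is a compact root and is therefore fixed by the Cayley transform, $c(\beta)=\beta$. As $c$ carries $\Pi(\frakg,\frakt)=\{\delta_i\}$ isomorphically onto $\Pi(\frakg,\frakh)=\{\alpha_i\}$ preserving the Killing form, it is an isomorphism of Dynkin diagrams; consequently $\beta$ is again the highest root of $\Delta(\frakg,\frakh)$ and $\delta_i\mapsto\alpha_i$ extends to an isomorphism of the affine diagrams fixing the affine node $-\beta$. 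Hence the phrases ``the unique simple root connected to $-\beta$'' and ``the unique simple root connected to $\alpha_1$'' may be read off equally from the affine diagram in the $\frakt$-picture, and the indices so obtained are exactly those occurring in the corollaries.

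Next I would run through the four cases, reading the nonzero-index data from the relevant corollary and the adjacency data from the affine diagrams in Table~\ref{tab:satake}, and check that they agree. In each affine diagram the node adjacent to $-\beta$ is an endpoint of the finite diagram, so it has a unique further neighbour; this is precisely what makes $\alpha_1$ and $\alpha_2$ well defined. For $\frake_6$ the projection $P_\beta(Z_{\alpha_j})$ is nonzero only at $j=2,4$, and $\alpha_2$ is attached to $-\beta$ with unique neighbour $\alpha_4$; for $\frake_7$ the indices are $j=1,3$ with $\alpha_1$ attached to $-\beta$ and neighbour $\alpha_3$; for $\frake_8$ they are $j=8,7$ with $\alpha_8$ attached to $-\beta$ and neighbour $\alpha_7$; for $\frakf_4$ they are $j=1,2$ with $\alpha_1$ attached to $-\beta$ and neighbour $\alpha_2$. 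In every case the node attached to $-\beta$ is the one yielding $-\tfrac12 H_\beta$ and its neighbour the one yielding $\tfrac12(X_\beta+X_{-\beta})$, while all remaining simple roots give $0$. Renaming these two distinguished roots $\alpha_1,\alpha_2$ as in the statement then gives the displayed formula together with $P_\beta(Z_\alpha)=0$ for every other $\alpha\in\Pi(\frakg,\frakh)$.

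Finally, that $\alpha_1$ and $\alpha_2$ are real is read directly from the Satake diagrams in Table~\ref{tab:satake} (neither of the two nodes is painted black, and both restrict trivially to $\frakh_\frakm$), as already noted before the statement of the main theorems. I expect the only real obstacle to be the careful matching of the two labelling conventions across the four algebras, in particular confirming that the node the corollaries single out for the Cartan contribution $-\tfrac12 H_\beta$ is indeed the one adjacent to $-\beta$ in the affine diagram. This ultimately rests on $\beta$ being a compact, hence $c$-fixed, root, which is what transports the affine-diagram adjacency faithfully from the $\frakt$-picture to the $\frakh$-picture.
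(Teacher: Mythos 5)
Your proposal is correct and is essentially the paper's own argument: the lemma is presented there with no separate proof, precisely as the combination of Corollaries~\ref{cor:E6_projection}, \ref{cor:E7_projection}, \ref{cor:E8_projection} and \ref{cor:F4_projection} together with the adjacency and reality data of Table~\ref{tab:satake}, and your case-by-case matching (indices $2,4$ for $\frake_6$; $1,3$ for $\frake_7$; $8,7$ for $\frake_8$; $1,2$ for $\frakf_4$, the first index of each pair carrying $-\tfrac12 H_\beta$) is exactly right.

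One caveat, which does not sink the proof but should be corrected: the claim that $\beta$, being compact, is fixed by the Cayley transform, $c(\beta)=\beta$ --- together with your closing remark that the transfer of adjacency ``ultimately rests'' on this --- is false. Compactness of $\beta$ means $\frakg_{\pm\beta}\subseteq\frakk$; it does not make $\beta$ invariant under $c$. Indeed, in the $D_4$ setting of Proposition~\ref{prop:D4_projections} one has $H_\beta=\tfrac12(H_{\gamma_1}+H_{\gamma_2}+H_{\gamma_3}+H_{\gamma_4})$, so by strong orthogonality of the $\gamma_i$ one gets $c(H_\beta)=\tfrac12\sum_i\left(X_{\gamma_i}+X_{-\gamma_i}\right)\in\fraka$; thus $c(\beta)$ is the highest root of $\Delta(\frakg,\frakh)$ and is a \emph{real} root (consistent with $\alpha_1,\alpha_2$ being real), not $\beta$ itself. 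What actually transports the adjacency from the $\frakt$-picture to the $\frakh$-picture is the other sentence you wrote: $c$ is an automorphism of $\frakg$ carrying $\delta_i\mapsto\alpha_i$, hence an isomorphism of root systems taking highest root to highest root and preserving Cartan integers, so $\alpha_j$ is adjacent to the affine node of $\Delta(\frakg,\frakh)$ if and only if $\delta_j$ is adjacent to the affine node of $\Delta(\frakg,\frakt)$, index for index. With the false sentence deleted and that diagram-isomorphism argument kept as the justification, your proof stands as written.
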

We recall some standard facts about $\sl(2,\CC)$-representations.

\begin{lemma}\label{lemma:sl(2)_action}
Let $\{H,E,F\}$ be a $\sl(2,\CC) $-triple. For every $\sl(2,\CC)$ representation $\Gamma_m$ there exits a weight basis ${v_{-m},v_{-m+2},\dots,v_m}$ of $\Gamma_m$, such that
	$$H.v_k=kv_k, \qquad E.v_k=(m-k)v_{k+2}, \qquad F.v_k=(m+k)v_{k-2}.$$
\end{lemma}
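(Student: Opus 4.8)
The statement is a classical fact about finite-dimensional $\sl(2,\CC)$-modules, so the plan is to give the standard ladder construction, using the realization $\Gamma_m = S^m(\CC^2)$ fixed earlier to guarantee that the module exists, is irreducible, and has dimension $m+1$. First I would fix a nonzero highest-weight vector $v_m \in \Gamma_m$, characterized by $E.v_m = 0$ and $H.v_m = m v_m$; such a vector spans a one-dimensional space because $\Gamma_m$ has highest weight $m$. I would then generate the remaining weight vectors by descending application of $F$, defining $v_{k-2}$ as a scalar multiple of $F.v_k$ for $k = m, m-2, \dots, -m+2$, with the scalar chosen so that the action of $F$ takes the prescribed form $F.v_k = (m+k) v_{k-2}$. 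The relation $[H,F] = -2F$ shows by induction that $v_k$ lies in the weight-$k$ space, which gives $H.v_k = k v_k$ and hence the first identity, while the third identity holds by the choice of normalization.

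The substantive step is the $E$-action. I would prove $E.v_k = (m-k) v_{k+2}$ by downward induction on the weight, with base case $E.v_m = 0$. For the inductive step I would use the ladder identity $EF = FE + H$: applying $E$ to $v_{k-2}$ (a multiple of $F.v_k$) and rewriting $EF.v_k = (FE + H).v_k$ lets me substitute the inductive hypothesis for $E.v_k$ together with $H.v_k = k v_k$, reducing the claim to a numerical identity among the normalization constants. Finally, since $\dim \Gamma_m = m+1$, the chain $v_m, v_{m-2}, \dots, v_{-m}$ consists of $m+1$ nonzero vectors of pairwise distinct weights, so it is automatically a basis and the construction does not terminate prematurely.

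There is no serious obstacle here: existence and irreducibility come for free from the symmetric-power realization and the classification of finite-dimensional $\sl(2,\CC)$-representations, and the weight grading handles the $H$- and $F$-actions immediately. The only point demanding attention is the bookkeeping of the normalization scalars so that the $E$-recursion closes up to the stated coefficients; this is a routine computation that I would carry out explicitly in the final write-up.
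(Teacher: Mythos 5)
The paper itself offers no proof of Lemma~\ref{lemma:sl(2)_action}: it is ``recalled'' as a standard fact. So the only question is whether your argument is sound, and here there is a genuine gap, located exactly at the step you dismiss as routine bookkeeping. Your ladder construction (fix $v_m$ with $E.v_m=0$, define $v_{k-2}$ as a normalized multiple of $F.v_k$, prove the $E$-formula by downward induction via $EF=FE+H$) is the right strategy, but the recursion does \emph{not} close with the stated coefficients. With the paper's normalization $[E,F]=H$, $[H,E]=2E$, the inductive step gives
\[
E.v_{k-2}=\frac{1}{m+k}(FE+H).v_k=\frac{(m-k)(m+k+2)+k}{m+k}\,v_k,
\]
and the lemma requires this to equal $(m-k+2)\,v_k$, i.e.\ $(m-k)(m+k+2)+k=(m+k)(m-k+2)$; expanding, the left side is $m^2-k^2+2m-k$ and the right side is $m^2-k^2+2m+2k$, so the identity holds only for $k=0$. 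No choice of normalization scalars can repair this: any basis satisfying the stated formulas would force
\[
[E,F].v_k=\bigl[(m+k)(m-k+2)-(m-k)(m+k+2)\bigr]v_k=4k\,v_k=4H.v_k,
\]
and the eigenvalue of the operator $EF$ on the (one-dimensional) weight-$k$ space is basis-independent. In other words, the statement as printed is false; it is off by a factor of $2$, the correct formulas being $E.v_k=\tfrac{m-k}{2}v_{k+2}$, $F.v_k=\tfrac{m+k}{2}v_{k-2}$. The case $m=1$ already shows this: in the defining representation $E+F$ has eigenvalues $\pm1$, whereas the printed formulas would give $\pm2$.

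An honest execution of your plan would therefore not ``close up to the stated coefficients''; it would instead uncover this factor-of-two error, which is the genuinely useful outcome here. Note that the corrected formulas are also what the rest of the paper needs: with them, $\tfrac12(X_\beta+X_{-\beta})$ acts on the ordered weight basis of $\Gamma_{2d}$ by $\tfrac12K(2d)^T=-M_2(d)$, whose spectrum $\{-d,\dots,d\}$ is consistent with $\tfrac12(X_\beta+X_{-\beta})$ being conjugate to $\tfrac12H_\beta$, and the sign is immaterial in Theorem~\ref{thm:mult_not_G2} because the root sets of the polynomials $q_{\lambda,i}$ are symmetric about $0$. Had the printed coefficients been correct, the action would be $-2M_2(d)$, with spectrum $\{-2d,-2d+2,\dots,2d\}$, and the kernel count via Proposition~\ref{prop:joint_kernel} would come out wrong. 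So the fix is forced, and your write-up should state and prove the lemma with the extra factor $\tfrac12$.
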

Let $\lambda\in \Lambda=\Lambda_{\fg}$
and  $V(\lambda)$ the corresponding irreducible representation.
Further let
$$\lambda_1:=\lambda(\alpha_1^\vee), \qquad\lambda_2:=\lambda(\alpha_2^\vee),$$
and for $d\in \ZZ_{\geq 0}$ let  $$\epsilon_i:=\begin{cases}
	0 & \text{if $\lambda_i-d\equiv 0 \pmod 2$,}\\ 
	1&\text{if $\lambda_i-d\equiv 1 \pmod 2$,} 
\end{cases} \qquad b_i:=\min\{\lambda_i,d-\epsilon_i\}.$$
\begin{theo+}\label{thm:mult_not_G2}
	Assume $\lambda|_{\frakh_\frakm}=0$ and $\lambda(\alpha^\vee)\equiv 0 \pmod 2$ for every real simple root $\alpha \neq \alpha_1,\alpha_2$ (such a root exists only in the case $\frakf_4$, see Table~\ref{tab:satake}). Then $V(\lambda)|_\frakk$ contains the representation $\Gamma_m\boxtimes \mathbf{1}_{\frakm_c}$ if and only if $m=2d$ is even and $b_1+b_2\geq d$.
	In this case it contains the representation with the multiplicity
	$$m(\lambda,d)=\left[\frac{b_1+b_2-d+2}{2}\right].$$
\end{theo+}
\begin{proof}
	By Corollary~\ref{cor:Kostant_quaternionic} we need to find the dimension of the space
	$$\{ v \in \Gamma_m|\, q_{\lambda,i}(P_\beta(Z_{\alpha_i}))v =0, \, \forall \alpha_i \in \Pi_n(\frakg,\frakh) \}.$$
	Let $(v_{-m},v_{-m+2},\dots, v_m)$ be an ordered weight basis
        of $\Gamma_m$ for the $\sl(2,\CC)$-triple
        $\{h=H_\beta, e=X_{\beta}, f=X_{-\beta}\}$ as in Lemma~\ref{lemma:sl(2)_action}.
	Since $-\frac{1}{2}H_{\beta}$ has integer eigenvalues if and only if $m$ is even, we can assume $m=2d$ to be even.
	Then 
	$Z_{\alpha_1}$ acts on the representation
        space as $\frac 12 h$
        and $Z_{\alpha_2}$ as
        $\frac 12 h'$, $h'=e+f$.
	Then the statement follows from Proposition~\ref{prop:joint_kernel}.
\end{proof}

\begin{remark}\label{remark:h_m_res_0}
	It is easy to deduce what the condition $\lambda|_{\frakh_\frakm}=0$ implies for the highest weight $\lambda$ from the Satake diagram in Table~\ref{tab:satake}. If $\frakg_0$ is split, i.e. if $\frakg=\frakg_2,\frakf_4$ there is no condition. Otherwise we have
	$$\lambda|_{\frakh_\frakm}=0 \Leftrightarrow \begin{cases}
		\lambda(\alpha_1^\vee)=\lambda(\alpha_6^\vee), \,\lambda(\alpha_3^\vee)=\lambda(\alpha_5^\vee)& \text{if $\frakg=\frake_6$,}\\
		\lambda(\alpha_2^\vee)=\lambda(\alpha_5^\vee)=\lambda(\alpha_7^\vee)=0& \text{if $\frakg=\frake_7$,}\\
			\lambda(\alpha_2^\vee)=\lambda(\alpha_3^\vee)=\lambda(\alpha_4^\vee)=\lambda(\alpha_5^\vee)=0& \text{if $\frakg=\frake_8$.}
	\end{cases}$$
\end{remark}

Let $\CC_\beta\subseteq\sl(2,\beta)$ be the one-dimensional subalgebra spanned by $H_\beta$ and consider the subalgebra $\frakl=\CC_\beta \oplus \frakm_c \subseteq \frakk$.
\begin{theo+}\label{thm:twistor_mult_not_G2}
		The representation $V(\lambda)$ contains a $\frakl$-fixed vector if and only if 
			$\lambda|_{\frakh_\frakm}=0$,
			$\lambda(\alpha^\vee)\equiv 0 \pmod 2$ for every simple root $\alpha \neq \alpha_1,\alpha_2$.
			In this case the space of $\frakl$-fixed vectors has dimension $m(\lambda)$, given by 
			$$
			m(\lambda)=\left[\frac{(\lambda_1+1)(\lambda_2+1)+1}{2}\right].
			$$
\end{theo+}
\begin{proof}
	We can apply Theorem~\ref{thm:mult_not_G2} and perform restriction in stages from $\frakg$ to $\frakk$, and then to $\frakl$. A representation $\Gamma_{2d}\boxtimes \mathbf{1}_{\frakm_c}$ of $\frakk$ contains the trivial representation $\mathbf{1}_\frakl$ of $\frakl$ with multiplicity one, as the $0$ weight space of $\Gamma_{2d}$. Hence we can sum up all multiplicities of $\Gamma_{2d}\boxtimes\mathbf{1}_{\frakm_c}$ occuring in $V(\lambda)|_\frakk$.
	Assume wlog that $\lambda_1\leq \lambda_2$. 
	Then for $d\leq \lambda_1$ we have
	$$m(\lambda,d)=\left[\frac{d-\epsilon_1-\epsilon_2+2}{2}\right],$$
	for $\lambda_1<d\leq \lambda_2$ 
	$$m(\lambda,d)=\left[\frac{\lambda_1-\epsilon_2+2}{2}\right],$$
	and for $\lambda_2< d \leq \lambda_1+\lambda_2$ 
$$	m(\lambda,d)=\left[\frac{\lambda_1+\lambda_2-d+2}{2}\right].$$
	
	First assume $\lambda_1\equiv \lambda_2\equiv 0 \pmod 2$. Then summing over $d$ in the three intervals $0\leq d \leq \lambda_1$, $\lambda_1<d\leq \lambda_2$, $\lambda_2<d \leq \lambda_1+\lambda_2$ and splitting into even and odd indices we have
	\begin{align*}
		m(\lambda)&=\sum_{d=0}^{\frac{\lambda_1}{2}}(d+1)+\sum_{d=0}^{\frac{\lambda_1}{2}-1}d+ \frac{\lambda_2-\lambda_1}{2}\left(\frac{\lambda_1}{2}+1\right)+
		\left(\frac{\lambda_2-\lambda_1}{2}-1\right)\frac{\lambda_1}{2}
		+\sum_{d=0}^{\frac{\lambda_1}{2}-1}(d+1)+\sum_{d=0}^{\frac{\lambda_1}{2}-1}(d+1)\\
		&=\frac{1}{8}\left((\lambda_1+2)(\lambda_1+4)+\lambda_1(\lambda_1-2)+2\lambda_1(\lambda_1+2)+2(\lambda_2-\lambda_1)(\lambda_1+2)+2(\lambda_2-\lambda_1-2)\lambda_1\right) \\
		&=\frac{1}{2}(\lambda_1+\lambda_2+\lambda_1\lambda_2+2)=\frac{1}{2}((\lambda_1+1)(\lambda_2+1)+1).
	\end{align*}
Similarly if $\lambda_1\equiv \lambda_2\equiv1 \pmod 2$ we obtain
	\begin{align*}
	m(\lambda)&=\sum_{d=0}^{\frac{\lambda_1-1}{2}}d+\sum_{d=0}^{\frac{\lambda_1-1}{2}}(d+1)+ 
	\left(\lambda_2-\lambda_1\right)\frac{\lambda_1+1}{2}
	+\sum_{d=0}^{\frac{\lambda_1-1}{2}}(d+1)+\sum_{d=0}^{\frac{\lambda_1-3}{2}}(d+1)\\
	&=\frac{1}{2}(\lambda_1+\lambda_2+\lambda_1\lambda_2+1)=\frac{1}{2}(\lambda_1+1)(\lambda_2+1).
\end{align*}
For $\lambda_1\equiv0 \pmod 2$ and $\lambda_2 \equiv1 \pmod 2$ we obtain
	\begin{align*}
	m(\lambda)&=\sum_{d=0}^{\frac{\lambda_1}{2}}d+\sum_{d=0}^{\frac{\lambda_1}{2}-1}(d+1)+ 
	\left(\frac{\lambda_2-\lambda_1-1}{2}\right)\frac{\lambda_1}{2}+\left(\frac{\lambda_2-\lambda_1+1}{2}\right)\left(\frac{\lambda_1}{2}+1\right)
	+\sum_{d=0}^{\frac{\lambda_1}{2}-1}(d+1)+\sum_{d=0}^{\frac{\lambda_1}{2}-1}(d+1)\\
	&=\frac{1}{2}(\lambda_1+\lambda_2+\lambda_1\lambda_2+1)=\frac{1}{2}(\lambda_1+1)(\lambda_2+1).
\end{align*}
The remaining case follows by symmetry.
\end{proof}

\subsection{The case $\frakg_2$}
We let $\lambda_i, \epsilon_i,b_i$, $i=1,2$ be as before.

\begin{theo+}\label{thm:mult_G2_short}
	The representation $V(\lambda)$ restricted to $\frakk$ contains the representation $ \mathbf{1}_{\beta}\boxtimes \Gamma_m$ if and only if $m=2d$ is even and $b_1+b_2\geq d$.
	In this case it contains the representation with the multiplicity
	$$m_{\alpha_2}(\lambda,d)=\left[\frac{b_1+b_2-d+2}{2}\right].$$
\end{theo+}
\begin{proof}
	Here $\Gamma_m$ is a representation of $\sl(2,\CC)_{\alpha_2}$ and by Theorem~\ref{thm:Kostant} the multiplicity is given by the dimension of the space
	$$\{ v \in \Gamma_m|\, q_{\lambda,i}(P_{\alpha_2}(Z_{\alpha_i}))v =0, \, i=1,2 \}.$$
	Let $(v_{-m},v_{-m+2},\dots v_m)$ be again a  weight basis for the triple $\{H_{\alpha_2},X_{\alpha_2},X_{-\alpha_2}\}$ as in Lemma~\ref{lemma:sl(2)_action}.
	Then by Lemma~\ref{lemma:G2_projection} we have that the statement follows in the same way from Proposition~\ref{prop:joint_kernel} as in the proof of Theorem~\ref{thm:mult_not_G2}.
\end{proof}
Now we define
$$\lambda_2'=\begin{cases}
	\left[\frac{\lambda_2}{3}\right] & \text{if $\lambda_2 \equiv 0,2 \pmod 3$,}\\
	\left[\frac{\lambda_2}{3}\right]-1 & \text{if $\lambda_2 \equiv 1 \pmod 3$,}
\end{cases}, $$
$$\epsilon_2':=\begin{cases}
	0 & \text{if $\lambda'_2-d\equiv 0 \pmod 2$,}\\ 
	1&\text{if $\lambda_2'-d\equiv 1 \pmod 2$,} 
\end{cases} \qquad b_2':=\min\{\lambda_2',d-\epsilon_2'\}.$$
\begin{theo+}\label{thm:mult_G2_long}
	The representation $V(\lambda)$ restricted to $\frakk$ contains the representation $ \Gamma_m\boxtimes \mathbf{1}_{\frakm_c}$ if and only if $m=2d$ is even and $b_1+b'_2\geq d$.
	In this case it contains the representation with the multiplicity
	$$m_{\beta}(\lambda,d)=\left[\frac{b_1+b'_2-d+2}{2}\right].$$
\end{theo+}
\begin{proof}
	Here $\Gamma_m$ is a representation of $\sl(2,\CC)_{\beta}$ and by  Corollary~\ref{cor:Kostant_quaternionic} the multiplicity is given by the dimension of the space
	$$\{ v \in \Gamma_m|\, q_{\lambda,i}(P_{\beta}(Z_{\alpha_i}))v =0, \, i=1,2 \}.$$
	Let $V=\text{Span}\{v_{-m},v_{-m+2},\dots v_m\}$ be the
        representation
        space with weight basis for
        the triple $\{h=H_{\beta}, e=X_{\beta}, f=X_{-\beta}\}$ as in Lemma~\ref{lemma:sl(2)_action}.
	Then by Lemma~\ref{lemma:G2_projection} we can again assume
        $m=2d$ to be even and we have that $P_\beta(Z_{\alpha_1})$
acts as  $-\frac 12  
h$ and         $P_\beta(Z_{\alpha_2})$ as
  $-\frac 32         h' =  -\frac 32    (e+f)$.
	We define the polynomial
	$$q'_{\lambda,2}(t)=\left(t-\frac{\lambda_2}{3}\right)\left(t-\frac{\lambda_2-2}{3}\right)\dots \left(t+\frac{\lambda_2}{3}\right),$$
	Such that $$\ker q_{\lambda,2}(Z_{\alpha_2})=\ker q'_{\lambda,2}(M_2(d)).$$
	It is easy to see that the product of the simple factors with integer zeroes of $q'_{\lambda,2}(t)$ is given by
	$$(t-\lambda_2')(t-\lambda_2'+2)\dots(t+\lambda_2')$$
	for
	$$\lambda_2'=\begin{cases}
		\left[\frac{\lambda_2}{3}\right] & \text{if $\lambda_2 \equiv 0,2 \pmod 3$,}\\
		\left[\frac{\lambda_2}{3}\right]-1 & \text{if $\lambda_2 \equiv 1 \pmod 3$,}
	\end{cases}.$$
Then the theorem follows in the same way as Theorem~\ref{thm:mult_not_G2} and Theorem~\ref{thm:mult_G2_short}.
\end{proof}

Let $\frakl_\beta:=\CC_\beta \oplus \sl(2,\CC)_{\alpha_2}$ and $\frakl_\alpha:=\sl(2,\CC)_\beta\oplus \CC_{\alpha_2},$ where $\CC_{\alpha_2}$ is the one-dimensional subalgebra generated by $H_{\alpha_2}$. In the same way as Theorem~\ref{thm:twistor_mult_not_G2} we obtain the following.
\begin{theo+}\label{thm:twistor_mult_G2}
	\begin{enumerate}
		\item 	The representation $V(\lambda)$ contains a $\frakl_\beta$-fixed vector if and only if $\lambda_2\neq 1$.
		In this case the space of $\frakl_\beta$-fixed vectors has dimension $m_\beta(\lambda)$, given by 
		$$
		m_\beta(\lambda)=\left[\frac{(\lambda_1+1)(\lambda'_2+1)+1}{2}\right].
		$$
		\item The representation $V(\lambda)$ contains a $\frakl_{\alpha_2}$-fixed vector.
		In this case the space of $\frakl_{\alpha_2}$-fixed vectors has dimension $m_{\alpha_2}(\lambda)$, given by 
		$$
		m_{\alpha_2}(\lambda)=\left[\frac{(\lambda_1+1)(\lambda_2+1)+1}{2}\right].
		$$
	\end{enumerate}
\end{theo+}

\section{The classical cases}\label{sec:classical}
The main results of this article concern exceptional quaternionic symmetric pairs. For completeness we now consider the classical quaternionic symmetric spaces
$$G_c/K=\begin{cases}
	{\rm U}(n+2)/({\rm U}(2)\times {\rm U}(n)) & n\geq 2,\\
	\SO(n+4)/(\SO(4)\times \SO(n))& n\geq 4, \\
	\Sp(n+1)/(\Sp(1)\times \Sp(n)) & n\geq 1,
\end{cases}$$
where we assume $K\subseteq G_c$ is embedded in diagonal blocks.
We recall the main results of \cite{Kna01} in this case and give the multiplicities for representations of representations contained in the $L^2$-spaces of the associated twistor spaces.
\begin{remark}
	The methods of this article can easily be adapted to the classical quaternionic cases. The available statements in the literature, e.g \cite{Kna01,Lee74,HTW05} and references therein, are already more general, such that we rather give a short exposition of some of the known results and specify them explicitly in the quaternionic setting. 
\end{remark}
\subsection{The unitary case}
Let $n\geq 2$ and $V(\lambda)$ be an irreducible representation of ${\rm U}(n+2)$ with highest weight $\lambda \in \ZZ^{n+2}$, given in the usual way with
$$\lambda_1\geq \lambda_2\geq \dots \geq \lambda_{n+2}.$$
\begin{theo+}[\cite{Kna01} Theorem~2.1]\label{thm:Knapp_U}
	The space $V(\lambda)^{{\rm U}(n)}$ of ${\rm U}(n)$-fixed vectors in $V(\lambda)$ is non-zero, if and only if $$\lambda_3=\lambda_4=\dots=\lambda_n=0$$ and $\lambda_2\geq 0, \lambda_{n+1}\leq 0$. As a ${\rm U}(2)$-representation, $V(\lambda)^{{\rm U}(n)}$ is equivalent to the tensor product of the ${\rm U}(2)$-representations with highest weights $(\lambda_1,\lambda_2)$ and $(\lambda_{n+1},\lambda_{n+2})$. 
\end{theo+}
We remark that the  ${\rm U}(2)$ tensor-product is multiplicity free and can be given by the classical formulas for the tensor product of $\SU(2)$-representations.
\begin{coro+}\label{cor:Knapp_U_mult}
	If $V(\lambda)^{{\rm U}(n)}$ is non-trivial, it decomposes as a ${\rm U}(2 )$-representation as
	$$
	\bigoplus_{k=0}^{\min(\lambda_1-\lambda_2,\lambda_{n+1}-\lambda_{n+2})}W(\lambda_1+\lambda_{n+1}-k,\lambda_2+\lambda_{n+2}+k),
	$$
	where $W(\nu_1,\nu_2)$, $\nu_1\geq \nu_2$ is the irreducible ${\rm U}(2)$ representation of highest weight $(\nu_1,\nu_2)$.
\end{coro+}
\begin{proof}We use some standard facts about representations of ${\rm U}(n)$ and $\SU(n)$. See e.g. \cite{GW09}.
	We have that for any ${\rm U}(2)$-highest weight $(\mu_1,\mu_2),$
	the restriction to $\SU(2)$ 
	$$W(\mu_1,\mu_2)|_{\SU(2)}=S^{\mu_1-\mu_2}(\CC^2)$$
	is given by the $(\mu_1-\mu_2+1)$-dimensional irreducible representation of $\SU(2)$.
	Hence as a $\SU(2)$-representation, we have
	$$W(\lambda_1,\lambda_2)|_{\SU(2)}\otimes W(\lambda_{n+1},\lambda_{n+2})|_{\SU(2)}=\bigoplus_{k=0}^{\min(\lambda_1-\lambda_2,\lambda_{n+1}-\lambda_{n+2})}S^{\lambda_1-\lambda_2+\lambda_{n+1}-\lambda_{n+2}-2k}(\CC^2).$$
	Now for every $z\in {\rm U}(1)$ we have that $z\mathbf{1}_2\in {\rm U}(2)$ is acting on $W(\mu_1,\mu_2)$ by $z^{\mu_1+\mu_2}$, such that $z\mathbf{1}_2$ acts on the tensor product
		$W(\lambda_1,\lambda_2)\otimes W(\lambda_{n+1},\lambda_{n+2})$ by
		$z^{\lambda_1+\lambda_2+\lambda_{n+1}+\lambda_{n+2}}$. Hence 
		$$S^{\lambda_1-\lambda_2+\lambda_{n+1}-\lambda_{n+2}-2k}(\CC^2)
		$$ uniquely extends to the ${\rm U}(2)$-representation $$W(\lambda_1+\lambda_{n+1}-k,\lambda_2+\lambda_{n+2}+k)$$ in the ${\rm U}(2)$-tensor product.
\end{proof}

Let $L={\rm U}(1)\times {\rm U}(n) \subseteq \SU(2)\times {\rm U}(n) \subseteq {\rm U}(2)\times {\rm U}(n).$
Consider the twistor space
$${\rm U}(n+2)/L.$$ From Theorem~\ref{thm:Knapp_U} and Corollary~\ref{cor:Knapp_U_mult} we can immediately deduce the following.
\begin{theo+}\label{thm:twistor_mult_U}
	Let $V(\lambda)$ be an irreducible representation of ${\rm U}(n+2)$ with highest weight $\lambda \in \ZZ^{n+2}$ as before.
	Then $V(\lambda)$ contains a non-zero $L$-fixed vector if and only if 
	\begin{enumerate}[label=(\roman*)]
		\item $\lambda_3=\lambda_4=\dots=\lambda_n=0$,
		\item $\lambda_2\geq 0$ and $\lambda_{n+1}\leq 0$,
		\item $\lambda_1-\lambda_2+\lambda_{n+1}-\lambda_{n+2}\equiv 0 \pmod 2$.
	\end{enumerate}
	In this case the space of $L$-fixed vectors has dimension
	$$m(\lambda)=\min(\lambda_1-\lambda_2,\lambda_{n+1}-\lambda_{n+2})+1.$$
\end{theo+}

\subsection{The orthogonal case}
Let $n\geq 4$ and $V(\lambda)$ be the irreducible representation of $\SO(n+4)$ with highest weight $\lambda\in \ZZ^{[\frac{n+4}{2}]}$, given in the usual way with
\begin{equation}\label{eq:SO_weights}
	\begin{cases}
	\lambda_1\geq \lambda_2\geq \dots \geq\abs{ \lambda_{\frac{n+4}{2}} }& \text{if $n$ is even,}\\
	\lambda_1 \geq \lambda_2 \geq \dots \geq \lambda_{\frac{n+3}{2}}\geq 0 & \text{if $n$ is odd.}
\end{cases}
\end{equation}

\begin{theo+}[\cite{Kna01} Theorem~3.1]\label{thm:Knapp_O}
	The space $V(\lambda)^{\SO(n)}$ of $\SO(n)$-fixed vectors in $V(\lambda)$ is non-zero, if and only if $$\lambda_5=\lambda_6=\dots=\lambda_{[\frac{n+4}{2}]}=0.$$ As a $\SO(4)$-representation, $V(\lambda)^{\SO(n)}$ is equivalent to the restriction of irreducible the ${\rm U}(4)$-representation
	with highest weight $(\lambda_1,\lambda_2,\lambda_3,\abs{\lambda_4})$ to $\SO(4)$.
\end{theo+}
We can find the relevant representations in the restriction from ${\rm U}(4)$ to $\SO(4)$
using Theorem~\ref{thm:Kostant}. Let $\lambda'=(\lambda_1,\lambda_2,\lambda_3, \lambda_4)\in \ZZ^4$ with
$$\lambda_1\geq \lambda_2\geq \lambda_3\geq \lambda_4$$
be an integral dominant weight
of ${\rm U}(4)$. We define $$\lambda_1':=\min(\lambda_1-\lambda_2,\lambda_3-\lambda_4), \qquad \lambda_2'=\lambda_2-\lambda_3.$$
Further let again for $d\in \ZZ_{\geq 0}$ let  $$\epsilon_i:=\begin{cases}
	0 & \text{if $\lambda'_i-d\equiv 0 \pmod 2$,}\\ 
	1&\text{if $\lambda'_i-d\equiv 1 \pmod 2$,} 
\end{cases} \qquad b_i:=\min\{\lambda'_i,d-\epsilon_i\}.$$
Recall that $\SO(4)\cong {\SU(2)\times \SU(2)}/\{\pm(\mathbf1,\mathbf{1})\}$ such that irreducible representations of $\SO(4)$ can be given by pairs of representations of $\SU(2)$,
$S^\mu(\CC^2)\boxtimes S^\nu(\CC^2)$ where $\mu\equiv \nu \pmod 2$.
	\begin{prop+}\label{prop:U(4)_branching}
	Let $W(\lambda')$ be the irreducibel representation of ${\rm U}(4)$ with highest weight $\lambda'$ as before. Then $W(\lambda')|_{\SO(4)}$ contains the $\SO(4)$-representation $\mathbf{1}_{\SU(2)}\boxtimes S^m(\CC^2)$ (and equivalently the representation $S^m(\CC^2)\boxtimes \mathbf{1}_{\SU(2)}$) if and only if
	\begin{enumerate}[label=(\roman*)]
		\item $m=2d$ is even,
		\item $\lambda_1-\lambda_2\equiv \lambda_3-\lambda_4 \pmod 2$,
		\item $b_1+b_2\geq d$.
	\end{enumerate}
In this case it contains the representation with the multiplicity
$$m(\lambda',d)=\left[\frac{b_1+b_2-d+2}{2}\right].$$
\end{prop+}
\begin{proof}
	Let $\frakg_0=\sl(4,\RR)$, $\frakk_0=\so(4)$ such that $(\frakg,\frakk)$ is the complex symmetric pair $(\sl(4,\CC),\so(4,\CC))$.
	Then $\so(4,\CC)$ is given by the two $\sl(2,\CC)$-triples
	$\{H_{1},E_{1},F_{1}\}$, $\{H_2,E_2,F_2\}$ with
	$$H_1=\begin{pmatrix}
		0& i &0&0\\
		-i&0 &0&0 \\
		0&0&0&-i\\
		0&0&i&0
	\end{pmatrix},\qquad 
E_1=\frac{1}{2}\begin{pmatrix}
	0&0&1&i \\
	0&0&-i&1\\
	-1&i&0&0\\
	-i&-1&0&0
\end{pmatrix},\qquad F_1=\frac{1}{2}\begin{pmatrix}
0&0&-1&i \\
0&0&-i&-1\\
1&i&0&0\\
-i&1&0&0
\end{pmatrix},
$$
	$$H_2=\begin{pmatrix}
	0& i &0&0\\
	-i&0 &0&0 \\
	0&0&0&i\\
	0&0&-i&0
\end{pmatrix},\qquad 
E_2=\frac{1}{2}\begin{pmatrix}
	0&0&1&-i \\
	0&0&-i&-1\\
	-1&i&0&0\\
	i&1&0&0
\end{pmatrix},\qquad F_2=\frac{1}{2}\begin{pmatrix}
	0&0&-1&-i \\
	0&0&-i&1\\
	1&i&0&0\\
	i&-1&0&0
\end{pmatrix}.
$$
Choosing the split Cartan $\frakh\subseteq\frakg$ of diagonal matrices and simple roots such that positive root spaces are upper triangular matrices we find
$$Z_{\alpha_1}=\frac{1}{2}(H_1+H_2), \qquad Z_{\alpha_2}=-\frac{1}{2}(E_1+F_1+E_2+F_2),\qquad Z_{\alpha_3}=-\frac{1}{2}(H_1-H_2)$$
for the simple roots $\alpha_1,\alpha_2,\alpha_3$ of $(\frakg,\frakh)$.
Let $\varpi_1,\varpi_2,\varpi_3$ be the corresponding fundamental weights.
The restriction of the integral dominant weight $\lambda'$ of $\gl(4,\CC)$ resp. $\fraku(4)$ to $\frakg$ gives the integral dominant weight
$$(\lambda_1-\lambda_2)\varpi_1+(\lambda_2-\lambda_3)\varpi_2+(\lambda_3-\lambda_4)\varpi_3,$$
see e.g. \cite[Theorem~5.5.22]{GW09}. In particular $\frakg_0$ is split, such that all simple roots are real. Then the statement follows in the same way as Theorems~\ref{thm:mult_not_G2}\ref{thm:mult_G2_short} and \ref{thm:mult_G2_long} from Theorem~\ref{thm:Kostant} and Proposition~\ref{prop:joint_kernel}.
\end{proof}

\begin{remark}
	Proposition~\ref{prop:U(4)_branching} together with Theorem~\ref{thm:Knapp_O} now gives a full generalization of the Cartan--Helgason Theorem for the quaternionic compact symmetric space $\SO(n+4)/(\SO(4)\times \SO(n))$, $n\geq 4$.
	The branching from ${\rm U}(n)$ to $\SO(n)$ can also be deduced from \cite[2.4.1]{HTW05} in terms of sums of products of Littlewood--Richardson coefficients. Yet our formula is more explicit in this special case.
\end{remark}
Let $L={\rm U}(1)\times \SO(n) \subseteq \SO(4)\times \SO(n)$, where we consider ${\rm U}(1)$ to be given by equivalence classes of the the circle contained in the left copy of $\SU(2)\times \SU(2)\to \SO(4)$.
From Theorem~\ref{thm:Knapp_O} and Proposition~\ref{prop:U(4)_branching} we immediately obtain in the same way as Theorems~\ref{thm:twistor_mult_not_G2},\ref{thm:twistor_mult_G2} the following.
\begin{theo+}\label{thm:twistor_mult_O}
	Let $V(\lambda)$ be the irreducible representation of $\SO(n+4)$ with highest weight $\lambda \in \ZZ^{[\frac{n+4}{2}]}$ as before.
	Then $V(\lambda)$ contains a non-zero $L$-fixed vector if and only if 
	\begin{enumerate}[label=(\roman*)]
		\item $\lambda_5=\lambda_6=\dots=\lambda_{[\frac{n+4}{2}]}=0$,
		\item $\lambda_1-\lambda_2+\lambda_{3}-\abs{\lambda_{4}}\equiv 0 \pmod 2$.
	\end{enumerate}
	In this case the space of $L$-fixed vectors has dimension
	$$m(\lambda)=\left[\frac{(\lambda_1'+1)(\lambda_2'+1)+1}{2}\right],$$
	where $\lambda_1'=\min(\lambda_1-\lambda_2,\lambda_3-\abs{\lambda_4})$ and
	$\lambda_2'=\lambda_2-\lambda_3$.
\end{theo+}

\subsection{The symplectic case}
Let $n\geq 1$ and $V(\lambda)$ be the irreducible representation of $\Sp(n+1)$ with highest weight $\lambda \in \ZZ^{n+1}$, given in the usual way with
$$\lambda_1\geq \lambda_2\geq \dots \geq \lambda_{n+1}\geq 0.$$
\begin{theo+}[\cite{Kna01} Theorem~4.1]\label{thm:Knapp_Sp}
	The space $V(\lambda)^{\Sp(n)}$ of $\Sp(n)$-fixed vectors in $V(\lambda)$ is non-zero, if and only if $$\lambda_3=\lambda_4=\dots=\lambda_{n+1}=0.$$ As a $\Sp(1)\cong\SU(2)$-representation, $V(\lambda)^{\Sp(n)}$ is irreducible and equivalent to the irreducible $(\lambda_1-\lambda_2+1)$-dimensional representation 
	$S^{\lambda_1-\lambda_2}(\CC^2)$ of $\SU(2)$.
\end{theo+}
\begin{remark}
	For the symplectic case there is a complete branching law available for the restriction of arbitrary $\Sp(n+1)$-representations to $\Sp(1)\times \Sp(n)$, see e.g. \cite{Lee74}.
\end{remark}
Now let $L={\rm U}(1)\times \Sp(n)\subseteq \Sp(1)\times \Sp(n)$.
We immediately obtain the following.
\begin{coro+}\label{cor:twistor_mult_Sp}
	Let $V(\lambda)$ be the irreducible representation of $\Sp(n+1)$ with highest weight $\lambda \in \ZZ^{n+1}$ as before.
	Then $V(\lambda)$ contains a non-zero $L$-fixed vector if and only if 
	\begin{enumerate}[label=(\roman*)]
		\item $\lambda_2=\lambda_3=\dots=\lambda_{n+1}=0,$
		\item $\lambda_1-\lambda_2\equiv 0 \pmod 2$.
	\end{enumerate}
	In this case the space of $L$-fixed vectors is one-dimensional.
\end{coro+}

\begin{rema+}\label{final-rem}
  We remark finally that for each quaternionic compact
  symmetric space $G_c/K$ with
  the complexified Lie algebra
  of $K$ being $\fk=\mathfrak{sl}(2, \mathbb C)_{\beta} +\fm_c$
  there is a twistor space $G/L$
  as a complex manifold
 fibered over $G_c/K$  with fiber
 being the projective space $K/L=
\SU(2)_\beta/{\rm U}(1)_\beta= \mathbb P^1$,
 where   $K'$ is locally ${\rm U}(1)_\beta\times M_c$.
 Our results above can be formulated
 as an irreducible decomposition
 for the space $L^2(G_c/{\rm U}(1)_\beta\times M_c, \chi_l)$
 for a line bundle defined by an even character of ${\rm U}(1)_\beta$
 over $G_c/L$, the differential $d\chi_l$
 being  $d\chi_l=l\beta$.
 This decomposition is important in studying
 Heisenberg parabolically induced
 representations \cite{Zha23}
 and we plan to pursue this study in
 the future.
\end{rema+}

\appendix
\section{Dimension
formula for joint kernels
 non-commuting operators}\label{app:linear_algebra}
We shall compute the dimension
of the joint kernel of two
non-commuting semisimple elements
on $V=\mathbb C^{2d+1}$. The result might be done by using 
explicit formulas for the eigenvectors on $h'$
obtained in \cite[Theorem~1]{CW08}
and may require much detailed linear algebra computations. 
We shall use here again the representation theory 
of $SL(2, \mathbb C)$.

Let $\mathfrak{s}=\mathfrak{sl}_{2}(\mathbb{C})$ with a standard $\mathfrak{sl}_{2}$-triple $\{h,e,f\}$, i.e.,  $[h,e]=2e$, 
$[h,f]=2f$, $[e,f]=h$, realized explicitly as 
\[h=\left(\begin{array}{cc}1&\\&-1\\\end{array}\right),\quad 
e=\left(\begin{array}{cc}&1\\0&\\\end{array}\right),\quad\ f=\left(\begin{array}{cc}&0\\1&\\\end{array}\right).\] 
Let $h'=e+f$. Then $h'$ is
the Cartan element of another $\mathfrak{sl}_{2}$-triple. 
We introduce also the $SL(2, \mathbb C)$-elements
$A:=\Exp(\frac{\pi\mathbf{i}}{2}h)$ and $B:=\Exp(\frac{\pi\mathbf{i}}{2}h')$ in $\SL_{2}(\mathbb{C})$; in the above realization, 
\[A=\left(\begin{array}{cc}\mathbf{i}&\\&-\mathbf{i}\\\end{array}\right), \quad 
B=\left(\begin{array}{cc}&
                           \mathbf{i}\\\mathbf{i}&\\\end{array}\right), 
                       \Gamma (A)f(z_1, z_2)= f(iz_1, -iz_2), \, 
                       \Gamma (B)f(z_1, z_2)= f(iz_2, iz_2). 
                     \]
They  generate the finite 
quaternionic group $Q_{8}$  as a subgroup of $\SL_{2}(\mathbb{C})$. 
                     The adjoint actions of $A$ on $h'$
and $B$ on $\{h, e, f\}$ are given by 
$$\Ad(A)h'=-h', 
\Ad(B)e=f, \Ad(B)f=e, \Ad(B)h=-h.$$

Recall our notation 
 $\Gamma:=\Gamma_{2d}$ ($d\in\mathbb{Z}_{>0}$),  the irreducible representation of 
 $\mathfrak{sl}(2)$ on $V=\mathbb C^{2d+1}$.
The Lie algebra and Lie group actions 
will all be written as $\Gamma$.
We realize $V$ as the space of
homogeneous polynomials $f(z_1, z_2)$ of
degree $2d$ with $\Gamma(g)f(z_1, z_2)= f((z_1, z_2)g)$,
$g\in SL(2, \mathbb C)$.
Since $\Gamma(-I)=1$, the image
of $Q_8$ under $\Gamma$ is the abelian group $\Gamma(Q_8)= \mathbb Z_{2}\times 
\mathbb Z_{2}$.

Under the above realization  we have $\Gamma(h)=z_1\partial_1 
-z_2\partial_z$ and 
$\Gamma(h')=z_1\partial_2+
z_2\partial_1$, and their eigenvectors
are 
$$e_k= z_1^{d+k}z_2^{d-k}, e_k'=
(z_1+z_2)^{d+k}(z_1-z_2)^{d-k}, \quad -d\le k\le k,
$$
with
$$
\Gamma(h) e_k=2k e_k,  \Gamma(h') e_k'=2k e_k'.
$$
The bases diagonalise also
the action of $\Gamma(Q_8)= \mathbb Z_{2}\times 
\mathbb Z_{2}$.  Put, for $i=0, 1$, 
\[V_{i}=\{v\in V: \Gamma(A)v=(-1)^iv\}, \quad 
V'_{i}=\{v\in V: \Gamma(B)v=(-1)^iv\}.\]
Then $V=V_0\oplus V_1 = V_0'  \oplus V_1' 
= V_{0, 0} \oplus  V_{0, 1} \oplus 
V_{1, 0} \oplus 
 V_{1, 1}, 
 $ where 
 $V_{i, j} =V_i\cap V_j'$.  Explicitly
we have
  \begin{equation}
    \label{AB-act}
V_i=\text{Span}\{e_k, k\equiv i \pmod 2\}, \quad 
V_i'=\text{Span}\{e_k^{\prime}, k\equiv i \pmod 2\}, \quad i=0, 1.     
  \end{equation}

Let  $(a_{k,l})_{-d\le k, l\le l}$ be the matrix for the change of bases,
$$
e_k= \sum_l a_{k,l} e_l'.
$$
Then it is of full rank $2d+1$, and 
satisfies
\begin{equation}\label{sym-a-kl}
a_{k,l}= (-1)^{d+l} a_{k, -l} = 
  (-1)^{d+l} a_{-k, l}
  \end{equation}
by the formula (\ref{AB-act})
for the actions of $\Gamma(A)$
and  $\Gamma(B)$.

 Write $p_{b}(x)$ the polynomial of degree $b+1$, 
 \[p_{b}(x)=\prod_{0\leq j\leq b}(x-2b+4j).\]
 Let $0\leq b, b'\leq d$. Consider 
 the operators $P:=p_{b}(\Gamma(h))=\Gamma(p_b(h))$ and $P':=p_{b'}(\Gamma(h'))=\Gamma(p_{b'}(h'))$
 and their joint kernel on $V$,
\[V_{b, b'}=\ker P
  \cap \ker P',\]
where $\ker P$ and $\ker P'$ can be found explicitly as
\begin{equation}\label{kerP-P}
\ker P=\text{Span}\{z_1^{d+k}z_2^{d-k}; 
-b\le k \le b,  k\equiv b \pmod 2\}
\end{equation}
and
\begin{equation}\label{kerP-P-1}
\ker P'=\text{Span}\{(z_1+z_2)^{d+k}(z_1-z_2)^{d-k}; 
-b'\le k \le b', k\equiv b' \pmod 2\}.
\end{equation}

 \begin{prop+}   \label{prop:joint_kernel}
The following dimension formula holds
 \[\dim V_{b,b'}=\lfloor\frac{b+b'-d+2}{2}\rfloor\] if $b+b'\geq d$; and 
$\dim V_{b,b'}=0$ otherwise.  
\end{prop+}

This will be proved by studying  the actions of $A$ and $B$ on the spaces 
$\ker P$ and 
$\ker P'$.

\begin{lemma}  \label{L:dim1} 
It holds
$V_{b, b'}
\subseteq V_{i, i'} $
where $b\equiv i=0, 1 $ and $b'\equiv i'=0, 1 \pmod 2$.
\end{lemma} 
This is obtained by a straightforward computation.

\begin{lemma}\label{L:dim3}  
If $b+b'\leq d-1$, then $V_{b,b'}=0$. 
\end{lemma} 

\begin{proof}
By (\ref{kerP-P})   $\ker P$  has a basis $\{v_{0},\Gamma(e)^{j}v_{0}, 
\Gamma(f)^{j}v_{0}:1\leq j\leq b\}$ for some $v_0$. Expand the product \[
P'=p_{b'}(\Gamma(h'))=\prod_{0\leq j\leq b'}\Gamma(e+f-2b'+4j)\] as a sum of 
$\Gamma(f^{i}h^{j}e^{k})$, with $f^{i}h^{j}e^{k}$ ($i,jk\geq 0$) 
being a basis of the universal enveloping algebra $\mathcal{U}(\mathfrak{sl}_{2}(\mathbb{C}))$. The 
highest term is $\Gamma(e^{b'+1})$ under the action of $\Gamma(h)$. But as $b+(b'+1)\leq d$ it follows that $P'$ acts on $\ker P$ 
as an injective linear map. Thus, $V_{b,b'}=0$. 
\end{proof}

\begin{proof} (Proposition \ref{prop:joint_kernel}) If $b+b'\le d-1$ then this is proved in the previous Lemma. We assume $ b+b'\ge d$.
The key idea is to use Lemma \ref{L:dim1} and to solve the system of equations
for $\ker P\cap \ker P'$ along with the actions of $A$ and $B$. We prove only
the case that $d, b, b'$ are even. Now 
$\ker P\cap \ker P' 
\subset (\ker P\cap V_0)$, $\ker P\cap \ker P' 
\subset (\ker P'\cap V_0')$,  the latter spaces being  of
dimensions $b+1$
and $b'+1$ respectively.
The space
$\ker P\cap \ker P' 
= (\ker P\cap V_0)
\cap (\ker P'\cap V_0')
$,
and that an element $f=\sum_{k} x_k e_k\in (\ker P\cap V_0)$
is also in $(\ker P'\cap V_0')$
is equivalent to $x_k= x_{-k}$ and
$$
\sum_k a_{kl}
x_k=0, 
$$
for $l\notin \{l'; -b'\le l'\le b', l'\equiv b' \pmod 2\}$.
But due to symmetry condition
$x_k= x_{-k}$ and (\ref{sym-a-kl})
this becomes a system of equations
for 
$$
l\notin \{l'; 0\le l'\le b', l\equiv b' \pmod 2\}$$
with $\frac b2 +1$ variables
$$
x_k, 
0\le k\le b, k\equiv b \pmod 2.
$$
The
corresponding
matrix
$(a_{k,l})$ for
$(k, l)$ as above is of
full rank $\frac{d}2+1 -
\frac{b'}2+1=\frac{d-b'}2  \le \frac{b}2 +1
$. Thus the dimension of the solution space is 
$$
\frac b2 +1 -\frac{d-b'}2 =
\frac{b+b'-d+2}2.
$$
\end{proof}

\section{Tables}\label{app:tables}
\begin{table}[h!]
	\begin{center}
		\begin{tabular}{|l|l|l|l|}
			\hline
			$\frakg$ & $\frakk$ & $\frakg_0$ & $\frakk_0$ \\
			\hline
			$\frake_6$ & $ \sl(2,\CC)\oplus\sl(6,\CC)$ & $\frake_{6,4}$ & $\su(2)\oplus \su(6)$  \\
			\hline
			$\frake_7$ & $\sl(2,\CC)\oplus \so(12,\CC)$ & $\frake_{7,4}$ & $ \su(2)\oplus\so(12)$ \\
			\hline
			$\frake_8$ & $\sl(2,\CC)\oplus\frake_7$ & $\frake_{8,4}$& $\su(2)\oplus \frake_{7,0}$ \\ 
			\hline  
			$\frakf_4$ &  $\sl(2,\CC)\oplus \sp(3,\CC) $& $\frakf_{4,4}$ & $\su(2)\oplus \sp(3)$ \\ 
			\hline
			$\frakg_2$ & $\sl(2,\CC)\oplus\sl(2,\CC)$ & $\frakg_{2,2}$ & $\su(2)\oplus \su(2)$ \\ 
			\hline
		\end{tabular}
		\vskip0.20cm
		\caption
		{Exceptional Lie algebras $\frakg$ with quaternionic real form $\frakg_0$ and subalgebras $\frakk$ and their compact real form $\frakk_0$.}
		\label{tab:2}
	\end{center}
\end{table}

\begin{table}[h!]
	\begin{center}
		\begin{tabular}{|l|l|l|l|}
			\hline
			$\frakg_0$ &Affine Dynkin diagram& Satake diagram & Real simple roots\\
			\hline
			$\frake_{6,4}$ & $\dynkin[edge length=.75cm,labels={-\beta,1,2, 3, 4,5, 6}]E[1]6$& $\dynkin[edge length=.75cm,
			involutions={16;35}]E{oooooo}$& $2,4$ \\
			\hline
			$\frake_{7,4}$ &$\dynkin[edge length=.75cm,labels={-\beta,1,2, 3, 4,5, 6,7}]E[1]7$ &$ \dynkin[edge length=.75cm]E{VI}$ & $1,3$ \\
			\hline
			$\frake_{8,4}$& $\dynkin[edge length=.75cm,labels={-\beta,1,2,3, 4,5, 6,7,8}]E[1]8$&$\dynkin[edge length=.75cm]E{IX}$ & $7,8$ \\
			\hline
			$\frakf_{4,4}$ & $\dynkin[edge length=.75cm,labels={-\beta,1,2, 3, 4}]F[1]4$&$\dynkin[edge length=.75cm]FI$ & $1,2,3,4$ \\
			\hline
			$\frakg_{2,2}$ &$\dynkin[edge length=.75cm,labels={-\beta,1,2}]G[1]2$ &$\dynkin[edge length=.75cm]GI$ & $1,2$\\
			\hline
		\end{tabular}
		\vskip0.20cm
		\caption
		{Affine Dynkin diagrams with labeling of simple roots $\alpha_i$ and lowest root $-\beta$, Satake diagrams and real simple roots of  quaternionic real forms of exceptional simple Lie algebras (see \cite{Hel78}).}
		\label{tab:satake}
	\end{center}
\end{table}

\newpage
\bibliographystyle{amsplain}

\end{document}